\titleformat{\section}{\normalsize\bfseries}{\thesection}{1em}{}
\titleformat{\subsection}{\normalsize\bfseries}{\thesubsection}{1em}{}
\numberwithin{equation}{subsection}
\theoremstyle{plain}
\theoremstyle{definition}
\theoremstyle{definition}
\newtheorem{defn}[subsection]{Definition}
\newtheorem{para}[subsection]{}
\newtheorem{egg}[subsection]{Example}
\newtheorem{rmk}[subsection]{Remark}
\newtheorem*{assumption*}{Assumption}
\newtheorem{defn_sub}[subsubsection]{Definition}
\newtheorem{para_sub}[subsubsection]{}
\newtheorem{egg_sub}[subsubsection]{Example}
\newtheorem{rmk_sub}[subsubsection]{Remark}
\newtheorem{assumption_sub}[subsubsection]{Assumption}
\theoremstyle{plain}
\newtheorem{prop}[subsection]{Proposition}
\newtheorem{theo}[subsection]{Theorem}
\newtheorem{lem}[subsection]{Lemma}
\newtheorem{cor}[subsection]{Corollary}
\newtheorem*{claim*}{Claim}
\newtheorem*{just*}{Justification}
\newtheorem*{lem*}{Lemma}
\newtheorem*{prop*}{Proposition}
\newtheorem{prop_sub}[subsubsection]{Proposition}
\newtheorem{theo_sub}[subsubsection]{Theorem}
\newtheorem{lem_sub}[subsubsection]{Lemma}
\newtheorem{cor_sub}[subsubsection]{Corollary}
\newcommand{\X}{\mathscr{X}}
\newcommand{\Y}{\mathscr{Y}}
\newcommand{\Z}{\mathscr{Z}}
\newcommand{\J}{\mathscr{J}}
\newcommand{\V}{\mathscr{V}}
\newcommand{\JSig}{\Sig_{\underJ}}
\newcommand{\ob}{\ensuremath{\operatorname{\textnormal{\textsf{ob}}}}}
\newcommand{\End}{\mathsf{End}}
\newcommand{\di}{\diamondsuit}
\newcommand{\tensor}{\otimes}
\newcommand{\Mnd}{\mathsf{Mnd}}
\newcommand{\Lan}{\mathsf{Lan}}
\newcommand{\Cat}{\text{-}\mathsf{Cat}}
\newcommand{\Cocts}{\text{-}\mathsf{Cocts}}
\newcommand{\underJ}{\kern -0.5ex \mathscr{J}}
\newcommand{\Set}{\mathsf{Set}}
\newcommand{\ev}{\mathsf{ev}}
\newcommand{\op}{\mathsf{op}}
\newcommand{\Th}{\mathsf{Th}}
\newcommand{\T}{\mathbb{T}}
\newcommand{\C}{\mathscr{C}}
\newcommand{\inj}{\mathsf{in}}
\newcommand{\B}{\mathscr{B}}
\newcommand{\A}{\mathscr{A}}
\newcommand{\U}{\mathcal{U}}
\newcommand{\Mod}{\text{-}\mathsf{Mod}}
\newcommand{\y}{\mathsf{y}}
\newcommand{\Alg}{\text{-}\mathsf{Alg}}
\newcommand{\F}{\mathcal{F}}
\newcommand{\Lbar}{\bar{L}}
\newcommand{\lambdabar}{\bar{\lambda}}
\newcommand{\K}{\mathscr{K}}
\newcommand{\M}{\mathscr{M}}
\newcommand{\limit}{\operatorname{\mathsf{lim}}}
\newcommand{\terml}{t}
\newcommand{\termr}{u}
\newcommand{\termlbar}{\overline{\terml}}
\newcommand{\termrbar}{\overline{\termr}}
\newcommand{\II}{\mathbb{I}}
\newcommand{\la}{\langle}
\newcommand{\ra}{\rangle}
\newcommand{\Ran}{\mathsf{Ran}}
\newcommand{\FinCard}{\mathsf{FinCard}}
\newcommand{\N}{\mathbb{N}}
\newcommand{\Monadic}{\mathsf{Monadic}}
\newcommand{\Mon}{\mathsf{Mon}}
\newcommand{\colim}{\operatorname{\mathsf{colim}}}
\newcommand{\G}{\mathscr{G}}
\newcommand{\Iso}{\mathsf{Iso}}
\newcommand{\W}{\mathcal{W}}
\newcommand{\Sig}{\mathsf{Sig}}
\newcommand{\D}{\mathscr{D}}
\newcommand{\bbD}{\mathbb{D}}
\newcommand{\scrT}{\mathscr{T}}
\newcommand{\E}{\mathscr{E}}
\newcommand{\MM}{\mathbb{M}}
\newcommand{\All}{\mathsf{All}}
\newcommand{\Ord}{\mathsf{Ord}}
\newcommand{\CAT}{\text{-}\mathsf{CAT}}
\newcommand{\ALG}{\mathsf{Alg}}
\newcommand{\Hom}{\mathsf{Hom}}
\newcommand{\SF}{\mathsf{SF}}
\newcommand{\VProf}{\V\textnormal{-}\mathsf{Prof}}
\newcommand{\Mat}{\mathsf{Mat}}
\newcommand{\aff}{\mathsf{aff}}
\newcommand{\Graph}{\text{-}\mathsf{Graph}}
\newcommand{\VMat}{\V\text{-}\mathsf{Mat}}
\newcommand{\llb}{\llbracket}
\newcommand{\rrb}{\rrbracket}
\def\profto{\mathop{\rlap{\hspace{1.4ex}$|$}{\longrightarrow}}\nolimits}
\begin{document}

\title{\Large \textbf{Presentations and algebraic colimits of\\ enriched monads for a subcategory of arities}}
\author{Rory B. B. Lucyshyn-Wright\let\thefootnote\relax\thanks{We acknowledge the support of the Natural Sciences and Engineering Research Council of Canada (NSERC), [funding reference numbers RGPIN-2019-05274, RGPAS-2019-00087, DGECR-2019-00273].  Cette recherche a été financée par le Conseil de recherches en sciences naturelles et en génie du Canada (CRSNG), [numéros de référence RGPIN-2019-05274, RGPAS-2019-00087, DGECR-2019-00273].} \medskip \\ Jason Parker \medskip
\\
\small Brandon University, Brandon, Manitoba, Canada}
\date{}

\maketitle

\begin{abstract}
We develop a general framework for studying signatures, presentations, and algebraic colimits of enriched monads for a subcategory of arities, even when the base of enrichment $\V$ is not locally presentable. When $\V$ satisfies the weaker requirement of local boundedness, the resulting framework is sufficiently general to apply to the $\Phi$-accessible monads of Lack and Rosick\'y and the $\J$-ary monads of the first author, while even without local boundedness our framework captures in full generality the presentations of strongly finitary monads of Lack and Kelly as well as Wolff's presentations of $\V$-categories by generators and relations.  Given any small subcategory of arities $j : \J \hookrightarrow \C$ in an enriched category $\C$, satisfying certain assumptions, we prove results on the existence of free $\J$-ary monads, the monadicity of $\J$-ary monads over $\J$-signatures, and the existence of algebraic colimits of $\J$-ary monads.  We study a notion of presentation for $\J$-ary monads and show that every such presentation presents a $\J$-ary monad. Certain of our results generalize earlier results of Kelly, Power, and Lack for finitary enriched monads in the locally finitely presentable setting, as well as analogous results of Kelly and Lack for strongly finitary monads on cartesian closed categories. Our main results hold for a wide class of subcategories of arities in \emph{locally bounded} enriched categories.
\end{abstract}

\section{Introduction}
When the basic concepts of universal algebra are defined in the usual way via \textit{signatures} or \textit{similarity types}, i.e. sets of operation symbols with specified arities, there arises the well-known complication that a given pair of varieties of algebras over two different signatures may turn out to be isomorphic as concrete categories over $\Set$, as exemplified by the fact that the familiar notion of group admits multiple distinct presentations.  An elegant solution to this problem was given by Lawvere \cite{Law:PhD}, whose \textit{algebraic theories} (or \textit{Lawvere theories}) classify varieties up to isomorphism (\textit{qua} concrete categories over $\Set$).  Lawvere was therefore able to regard systems of equations over signatures as \textit{presentations} of Lawvere theories \cite[II.2]{Law:PhD}, the latter providing a presentation-independent notion of algebraic theory.  By way of the equivalence between Lawvere theories and finitary monads on $\Set$, Lawvere's work provides also an elegant approach to presentations of finitary monads that has since been generalized to the setting of enriched category theory by Kelly and Power \cite{KellyPower}, and the present paper is a contribution to this line of generalization.

In \cite{KellyPower}, Kelly and Power showed that if $\C$ is a locally finitely presentable $\V$-category over a locally finitely presentable symmetric monoidal closed category $\V$, then the forgetful functor $\W : \Mnd_f(\C) \to \End_f(\C)$ from finitary $\V$-monads on $\C$ to finitary $\V$-endofunctors on $\C$ is monadic, and the forgetful functor $\U : \Mnd_f(\C) \to \Sig_f(\C)$ from finitary $\V$-monads on $\C$ to finitary signatures in $\C$ is of descent type (see \cite[5.1]{KellyPower}), which entails in particular that $\U$ has a left adjoint and that every finitary $\V$-monad on $\C$ has a \emph{presentation} given by abstract operations and equations. In \cite{KeLa}, Kelly and Lack established analogous results for \textit{strongly finitary $\V$-monads} on cartesian closed categories $\V$, while Borceux and Day \cite[2.6.2]{BorceuxDay} had earlier exposited and credited to Kelly an analogue of one of these results, formulated in terms of enriched theories in the slightly more general setting of their \textit{$\pi$-categories}.  In \cite{Lackmonadicity}, Lack provided a further substantial advance in this area by showing that $\U$ is in fact \emph{monadic} (see \cite[Corollary 3]{Lackmonadicity}).  A different approach to enriched equational presentations was introduced by Fiore and Hur \cite{termequationalsystems}, with existence results under hypotheses involving the preservation of colimits of chains indexed by a given limit ordinal.

An area of recent interest in enriched category theory has been the study of classes of enriched monads and theories defined relative to a given class of weights or a given \textit{subcategory of arities}, i.e., a full subcategory that is dense in the enriched sense.  In the setting of ordinary $\Set$-enriched category theory, the latter theme goes back to Linton \cite{Lin:OutlFuncSem} and includes the theories and monads with arities of Berger, Melli\`es, and Weber \cite{BMW}.  In \cite{LR}, Lack and Rosick\'y defined the concept of \textit{$\Phi$-accessible $\V$-monad} for a class of weights $\Phi$ satisfying their Axiom A, along with the notion of \textit{Lawvere $\Phi$-theory}.  In the 2016 paper \cite{EAT}, the first author studied enriched \textit{$\J$-theories} and \textit{$\J$-ary} monads for a \textit{system of arities} $\J \hookrightarrow \V$, which is a (possibly large) subcategory of arities closed under the monoidal structure.  The 2019 paper \cite{BourkeGarner} of Bourke and Garner studied monads and theories relative to a small subcategory of arities $\A \hookrightarrow \C$ in a locally presentable $\V$-category $\C$ over a locally presentable closed category $\V$.  While the latter work is not primarily concerned with presentations, Bourke and Garner show that their $\A$\emph{-nervous monads} are monadic over $\A$-signatures, and that the category of $\A$-nervous monads is cocomplete (even locally presentable), with small colimits therein being \emph{algebraic} (see \cite[Proposition 31, Theorem 38]{BourkeGarner}).  However, since the latter work is situated in the locally presentable setting, it does not subsume any of the following frameworks in full generality: (1) the $\Phi$-accessible monads of Lack and Rosick\'y \cite{LR}; (2) the $\J$-ary monads of \cite{EAT}; (3) the enriched theories of Borceux and Day \cite{BorceuxDay}, which are encompassed by 1 and by 2; (4) the strongly finitary $\V$-monads of Lack and Kelly \cite{KeLa}, which are encompassed by 1, 2, and 3.  Indeed, in settings (1)-(4), $\V_0$ is not required to be locally presentable.  Moreover, it is clear from the work of Kelly \cite[Chapter 6]{Kelly}, the paper of Lack and Rosick\'y \cite{LR}, and the present paper that various methods of enriched-categorical algebra are applicable at least in the \textit{locally bounded closed categories} $\V$ of Kelly \cite[6.1]{Kelly} and, more generally, the \textit{locally bounded $\V$-categories} \cite{locbd} over such $\V$.  This ensures that these methods may be applied to the various locally bounded closed categories that provide backgrounds for mathematics and computer science (see \cite[\S 5.3]{locbd}), and which need not be locally presentable, along with the various locally bounded $\V$-categories of structures in such $\V$ (\cite[Chapter 6]{Kelly}, \cite[\S 11]{locbd}).

The first purpose of this paper is to develop a theory of presentations and colimits of enriched monads for subcategories of arities with sufficient generality to accommodate the following general classes of examples when $\V$ is locally bounded: (1) the $\Phi$-accessible $\V$-monads of Lack and Rosick\'y \cite{LR}, and (2) the $\J$-ary $\V$-monads for a small and \textit{eleutheric} system of arities $\J \hookrightarrow \V$ \cite{EAT}.  The second purpose of this paper is to ensure that the resulting theory of presentations and algebraic colimits covers in full generality the following specific settings, even when $\V$ is not locally bounded: (a) the strongly finitary $\V$-monads of Lack and Kelly \cite{KeLa} when $\V$ is a complete and cocomplete cartesian closed category $\V$ or, more generally, a $\pi$-category in the sense of Borceux and Day \cite{BorceuxDay}, and (b) Wolff's presentations of $\V$-categories by generators and relations, for an arbitrary complete and cocomplete $\V$ \cite{WolffVcat}, which (as we show in \S \ref{representablesexample}) are recovered by taking the subcategory of arities to be the Yoneda embedding for a discrete $\V$-category.

We accomplish these objectives by working with enriched monads for a suitable subcategory of arities $j : \J \hookrightarrow \C$ in a $\V$-category $\C$, where $\V$ is a complete and cocomplete symmetric monoidal closed category that need not be locally presentable. Our results apply when $\C$ is a locally bounded $\V$-category over a locally bounded closed category $\V$, and in some cases even without these assumptions, e.g. when $\V$ is a $\pi$-category.  Certain of our results generalize the results of Kelly, Power, and Lack mentioned above.

To obtain our results, we make some modest completeness and cocompleteness assumptions on the $\V$-category $\C$, and two main assumptions on the subcategory of arities $j : \J \hookrightarrow \C$. First, we generally assume that $j : \J \hookrightarrow \C$ is small and \emph{eleutheric} (cf. \cite{EAT}), which is a certain `exactness' condition that guarantees that the $\V$-endofunctors on $\C$ that are left Kan extensions along $j$ are precisely those that \emph{preserve} left Kan extensions along $j$; we call these the $\J$\emph{-ary} $\V$-endofunctors. We also assume that $j : \J \hookrightarrow \C$ satisfies a mild \emph{boundedness} condition, which we define in terms of certain notions from Kelly's classic paper \cite{Kellytrans} on transfinite constructions in category theory, where the reader can find a list of references regarding the rich history of the theme of existence of free constructions. Our main results on free $\J$-ary monads, algebraic colimits of $\J$-ary monads, and presentations of $\J$-ary monads then hold for any bounded and eleutheric subcategory of arities $j : \J \hookrightarrow \C$ in a $\V$-category $\C$ satisfying mild assumptions.

In order to further clarify the relation of this paper to the existing literature, we now further contrast this work with aspects of the recent paper of Bourke and Garner \cite{BourkeGarner}.  In the latter paper it is assumed that both $\V$ and $\C$ are locally presentable, while the present paper is applicable to a much broader class of enriched categories, including locally bounded $\V$-categories $\C$ over a locally bounded $\V$.  On the other hand, here we require that the subcategory of arities $\J \hookrightarrow \C$ be bounded and eleutheric, while Bourke and Garner allow an arbitrary small subcategory of arities $\A \hookrightarrow \C$ with $\C$ and $\V$ locally presentable.  However, in the latter locally presentable setting, every small subcategory of arities $\A$ is necessarily bounded and is actually contained in a \textit{larger} subcategory of arities $\J = \C_\alpha \hookrightarrow \C$ that is not only bounded but also \textit{eleutheric}, consisting of the (enriched) $\alpha$-presentable objects for a suitable cardinal $\alpha$ (\ref{eleuthericexamples}, \ref{boundedexamples}).  Hence, in the Bourke-Garner setting, presentations relative to $\A$ may be viewed also as presentations relative to $\J = \C_\alpha$, and the $\A$-nervous monads that they present are, in particular, $\alpha$-ary monads and, for some purposes, may be studied as such, by way of the earlier methods of Kelly-Power \cite{KellyPower} and Lack \cite{Lackmonadicity}.  In the present paper, we escape the locally presentable setting by asking for a \textit{given} bounded and eleutheric subcategory of arities $\J \hookrightarrow \C$ to play a role analogous to that of $\C_\alpha$.  It is important to note that in this general setting we can once again consider arbitrary small subcategories of arities $\A \hookrightarrow \C$ with $\A \subseteq \J$, and again any $\A$-presentation may be viewed as a $\J$-presentation and so, by the results in this paper, presents a \textit{$\J$-ary} monad.

In a subsequent paper \cite{Pres2}, we shall further explore the implications of the theory developed in this paper, and in particular we shall provide additional `user-friendly' techniques for constructing presentations of $\J$-ary monads, which will allow us to easily define many further examples of such presentations in a manner that closely resembles ordinary mathematical practice.  Furthermore, these techniques will enable us to generalize the adjunction between $\A$-\textit{pretheories} and $\V$-monads of Bourke and Garner \cite{BourkeGarner} beyond the locally presentable setting, by letting $\A \hookrightarrow \C$ be an arbitrary subcategory of arities that is contained in a given bounded and eleutheric subcategory of arities $\J \hookrightarrow \C$, as described above.

\medskip

We now provide a detailed overview of the paper. After reviewing some notation and background on enriched category theory in \S \ref{background}, in \S \ref{firstsection} we begin by defining the notion of an eleutheric subcategory of arities $j : \J \hookrightarrow \C$ in a $\V$-category $\C$ (originally defined in \cite{EAT} for $\C = \V$), and in \S \ref{Jarysection} we define the notions of $\J$\emph{-ary} $\V$-endofunctor and $\J$\emph{-ary} $\V$-monad on $\C$. In \S \ref{algfreemonads} we develop the theory of \emph{algebraically free monads} in the enriched context, generalizing aspects of Kelly's work on this topic in the ordinary $\Set$-enriched context in \cite{Kellytrans}. We begin \S \ref{freemonadsection} by defining the notion of a \emph{bounded} subcategory of arities, and we show that our running examples have this property, as does any small subcategory of arities in a locally bounded $\V$-category over a locally bounded closed category $\V$. We then prove our first main results in \ref{mainalgfreetheorem} and \ref{Whasleftadjoint}, which show that if $j : \J \hookrightarrow \C$ is a bounded subcategory of arities in a cocomplete and cotensored $\V$-category $\C$, then the forgetful functor $\W : \Mnd_{\underJ}(\C) \to \End_{\underJ}(\C)$ from $\J$-ary $\V$-monads on $\C$ to $\J$-ary $\V$-endofunctors on $\C$ is monadic, and the free $\J$-ary $\V$-monad on a $\J$-ary $\V$-endofunctor is \emph{algebraically} free.  

We commence \S \ref{freemonadsonsignatures} by defining the notion of a \emph{$\Sigma$-algebra} for a $\J$-signature $\Sigma$ in $\C$, relative to a subcategory of arities $j : \J \hookrightarrow \C$, and we then show in \ref{Vismonadic} under certain hypotheses that the forgetful functor $\End_{\underJ}(\C) \to \Sig_{\underJ}(\C)$ from $\J$-ary $\V$-endofunctors on $\C$ to $\J$-signatures in $\C$ is monadic. We then deduce in \ref{freemonadalgebrasaresignaturealgebras} that $\U : \Mnd_{\underJ}(\C) \to \Sig_{\underJ}(\C)$ has a left adjoint, and that the $\V$-category of algebras for the free $\J$-ary $\V$-monad on a $\J$-signature $\Sigma$ is isomorphic to the  $\V$-category $\Sigma\Alg$ of $\Sigma$-algebras, which is defined more directly in terms of the signature $\Sigma$. In \S \ref{monadicity} we use a theorem of Lack \cite{Lackmonadicity} to prove in \ref{Uismonadic} that the forgetful functor $\U : \Mnd_{\underJ}(\C) \to \Sig_{\underJ}(\C)$ is actually \emph{monadic}. 

\S \ref{algebraiccolimits} is concerned with algebraic colimits of $\J$-ary $\V$-monads, and is divided into three subsections. In \S \ref{limitVcategories} we first review some necessary background material on limits in $\V\CAT$ and $\V\CAT/\C$, and we prove some results about limits and colimits in limit $\V$-categories. We then use this material in \S \ref{algebraiccolimitssubsection} to define and study the notion of an algebraic colimit of $\V$-monads, thereby enriching the corresponding notion studied by Kelly in \cite{Kellytrans}. The final subsection \ref{algebraiccolimitsJary} defines the notion of an algebraic colimit of $\J$-ary $\V$-monads, and we then prove in \ref{colimitcategoryofalgebras} that if $j : \J \hookrightarrow \C$ is bounded, then the category $\Mnd_{\underJ}(\C)$ of $\J$-ary $\V$-monads on $\C$ has small algebraic colimits. 

We begin \S \ref{presentationssection} by defining the notion of a $\J$-\emph{presentation} $P = (\Sigma,E)$ for a subcategory of arities $j : \J \hookrightarrow \C$, consisting of a $\J$-signature $\Sigma$ and a \textit{system of $\J$-ary equations} $E = (\Gamma \rightrightarrows \U\left(\T_\Sigma\right))$, i.e. a pair of $\J$-signature morphisms from a $\J$-signature $\Gamma$ (the \textit{signature of equations}) to the underlying $\J$-signature of the free $\J$-ary $\V$-monad $\T_\Sigma$ on $\Sigma$. From results in \S \ref{algebraiccolimits} we then deduce in \ref{everypresentationpresentsJarymonad} that every $\J$-presentation $P$ \emph{presents} a $\J$-ary $\V$-monad $\T_P$, whose $\V$-category of algebras we show in \ref{Palgebrasisomorphismcor} is isomorphic to the $\V$-category $P\Alg$ of $P$\emph{-algebras} for the $\J$-presentation $P = (\Sigma,E)$, i.e. the full sub-$\V$-category of $\Sigma\Alg$ consisting of those $\Sigma$-algebras that \textit{satisfy the equations in $E$}, in a suitable sense. In \ref{presentationcor}, we also deduce that every $\J$-ary $\V$-monad has a $\J$-presentation, using our results on algebraic colimits and monadicity of $\J$-ary monads. In \S \ref{firstexamples} we discuss some examples of $\J$-presentations; firstly, we show that presentations of $\V$-categories by generators and relations are recovered as examples when $\J$ consists of the representables in a power of $\V$, and secondly we discuss presentations of strongly finitary $\V$-monads in cartesian closed topological categories over $\Set$, treating examples including internal modules and affine spaces over internal rigs (i.e. semirings).

In \S \ref{summary} we summarize the main results of the paper, noting that our running examples satisfy the hypotheses of these results, as does any small and eleutheric subcategory of arities in a locally bounded $\V$-category over a locally bounded closed category $\V$.

\section{Notation and background}  
\label{background}

We make substantial use of the methods of enriched category theory throughout this paper; for more details, one can consult (e.g.) the classic texts \cite{Dubucbook, Kelly}.  For the most part, we use the notation of Kelly's text \cite{Kelly}.  Throughout, we let $\V = (\V_0, \tensor, I)$ be a symmetric monoidal closed category with $\V_0$ locally small, complete, and cocomplete.

A \emph{weight} is a $\V$-functor $W : \B \to \V$ with $\B$ a (not necessarily small) $\V$-category, while we say that the weight $W$ is \emph{small} if $\B$ is small.  A \emph{weighted diagram} in a $\V$-category $\C$ is a pair $(W, D)$ consisting of a weight $W : \B \to \V$ and a $\V$-functor $D : \B \to \C$. A \emph{cylinder} for the weighted diagram $(W, D)$ is a pair $(C, \lambda)$ consisting of an object $C \in \ob\C$ and a $\V$-natural transformation $\lambda:W \to \C(C, D-)$.  A \textit{(weighted) limit} of $(W,D)$ is an object $\{W,D\}$ of $\C$ equipped with the structure of a \textit{limit cylinder} $(\{W,D\},\lambda)$ for $(W,D)$, i.e. a limit of $D$ indexed by $W$ in the sense of \cite[\S 3.1]{Kelly}.

It is convenient to define a \emph{dually weighted diagram} in $\C$ to be a pair $(W, D)$ consisting of a weight $W : \B^\op \to \V$ and a $\V$-functor $D : \B \to \C$, noting that $(W,D^\op)$ is then a weighted diagram in $\C^\op$.  A \emph{cylinder} for the dually weighted diagram $(W, D)$ is, by definition, a cylinder for the weighted diagram $(W,D^\op)$, and a \emph{(weighted) colimit} $W * D$ of $(W, D)$ is a limit $\{W,D^\op\}$ of $(W,D^\op)$.

Given a class of weighted diagrams $\Lambda$, a $\V$-functor $F : \C \to \D$ \textit{creates $\Lambda$-limits} provided that for every weighted diagram $(W,D) \in \Lambda$ in $\C$ and every limit cylinder $(C, \lambda)$ for $(W, FD)$ that exists in $\D$, there is a unique cylinder $\left(\bar{C}, \bar{\lambda}\right)$ for $(W, D)$ with $\left(F\bar{C}, F\bar{\lambda}\right) = (C, \lambda)$, and moreover $\left(\bar{C}, \bar{\lambda}\right)$ is a limit cylinder for $(W, D)$. Dually, we have the notion of \textit{creation of $\Lambda$-colimits} for a class of dually weighted diagrams $\Lambda$.  Given instead a class of weights $\Phi$, $F$ \textit{creates $\Phi$-limits} if $F$ creates $\Lambda$-limits for the class $\Lambda$ of all weighted diagrams with weights in $\Phi$; dually, we have the notion of \textit{creation of $\Phi$-colimits}.

Given a $\V$-functor $F : \C \to \D$ and a class of (possibly large) weights $\Phi$, we say that $F$ \emph{conditionally preserves} $\Phi$-limits \cite[2.3]{EAT} provided that for every limit $\{W, D\}$ that exists in $\C$ with $W \in \Phi$, if $\{W, FD\}$ exists in $\D$ then $F$ preserves the limit $\{W, D\}$. It is then easy to see that $F$ conditionally preserves $\Phi$-limits if $F$ creates $\Phi$-limits, and that $F$ preserves $\Phi$-limits if $\D$ has $\Phi$-limits and $F$ conditionally preserves $\Phi$-limits. We also have the dual notion of \emph{conditional preservation of $\Phi$-colimits}.

Given a class of weights $\Phi$, a $\V$-category $\C$ is $\Phi$\emph{-(co)complete} if $\C$ admits all $\Phi$-(co)limits, and a $\V$-functor is $\Phi$\emph{-(co)continuous} if it preserves all $\Phi$-(co)limits. In particular, a $\V$-category $\C$ is\emph{(co)complete} if it is $\Phi$-(co)complete for the class $\Phi$ of all small weights, and a $\V$-functor is \emph{(co)continuous} if it is $\Phi$-(co)continuous for this same class $\Phi$. Given objects $V \in \ob\V$ and $C \in \ob\C$, we denote the \emph{cotensor} of $C$ by $V$ in $\C$ (if it exists) by $[V, C]$, and the \emph{tensor} of $C$ by $V$ in $\C$ (if it exists) by $V \tensor C$.

We let $\V\CAT$ be the category of (possibly large) $\V$-categories. Given a $\V$-category $\C$, a \emph{$\V$-category over $\C$} is an object $(\A,U)$ of the slice category $\V\CAT \slash \C$, i.e., a $\V$-category $\A$ equipped with a specified $\V$-functor $U:\A \rightarrow \C$.  We often write $\A$ to denote $(\A,U)$.   We say that $\A$ is a \textbf{strictly monadic $\V$-category over $\C$} if $U$ is strictly monadic, i.e., if $U$ has a left adjoint and the comparison $\V$-functor of \cite[II]{Dubucbook} is an isomorphism, equivalently, if $\A \cong \T\Alg$ in $\V\CAT/\C$ for some $\V$-monad $\T$ on $\C$, where we equip the $\V$-category $\T\Alg$ of $\T$-algebras with its forgetful $\V$-functor.

We write $\VProf$ to denote the bicategory in which an object is a small $\V$-category, a 1-cell $F:\A \profto \B$ is a $\V$-profunctor, i.e. a $\V$-functor $F:\B^\op \otimes \A \rightarrow \V$, and a 2-cell is a $\V$-natural transformation.  Writing $\VProf^\op$ for the bicategory obtained by reversing only the 1-cells in $\VProf$, there is an isomorphism $(-)^\circ:\VProf^\op \rightarrow \VProf$ that is given on objects by $\A \mapsto \A^\op$ and sends each 1-cell $F:\A \profto \B$ to the 1-cell $F^\circ:\B^\op \profto \A^\op$ obtained as the composite $\A \otimes \B^\op \xrightarrow{\sim} \B^\op \otimes \A \xrightarrow{F} \V$.  Given an object $\A$ of a bicategory $\K$, we write $\Mnd_\K(\A)$ to denote the category of monads on $\A$ in $\K$, i.e. monoids in $\K(\A,\A)$, noting that $\Mnd_\K(\A) = \Mnd_{\K^\op}(\A)$.

\section{Eleutheric subcategories of arities}
\label{firstsection}

We begin by defining the fundamental notion of a subcategory of arities in an enriched category:

\begin{defn}
\label{subcategoryofarities}
A \textbf{subcategory of arities} in a $\V$-category $\C$ is a (not necessarily small) $\V$-category $\J$ equipped with a dense, fully faithful $\V$-functor $j:\J \to \C$.  For most purposes we may assume that $j:\J \hookrightarrow \C$ is a full and dense sub-$\V$-category $j : \J \hookrightarrow \C$. \qed
\end{defn}

\begin{rmk}\label{arities_remark}
In \cite{EAT} the first author defined the notion of a \emph{system of arities} $j : \J \hookrightarrow \V$ in the symmetric monoidal closed category $\V$, which is (equivalently, see \cite[3.8]{EAT}) a full sub-$\V$-category that is closed under $\tensor$ and contains the unit object $I$ (and hence is automatically dense by \cite[5.17]{Kelly}). In this paper, we are generalizing from systems of arities in $\V$ to subcategories of arities in arbitrary $\V$-categories. Our notion of subcategory of arities essentially agrees with that of \cite{BourkeGarner}, except that their subcategories of arities are always \emph{small}, and are only defined relative to locally presentable $\V$-categories over locally presentable closed categories $\V$. Nevertheless, most of the subcategories of arities that we consider in this paper will indeed be small.  \qed
\end{rmk}

\noindent If $j : \J \hookrightarrow \C$ is a subcategory of arities in a $\V$-category $\C$, then we let $\Phi_{\underJ}$ be the class of (not necessarily small) weights $\C(j-, C) : \J^\op \to \V$ with $C \in \ob\C$. We now generalize \cite[7.1]{EAT} from systems of arities in $\V$ to subcategories of arities in arbitrary $\V$-categories: 

\begin{defn}
\label{eleutheric}
A subcategory of arities $j : \J \hookrightarrow \C$ is \textbf{eleutheric} if $\C$ is $\Phi_{\underJ}$-cocomplete and $\C(J, -) : \C \to \V$ preserves $\Phi_{\underJ}$-colimits for each $J \in \ob\J$. \qed
\end{defn}
 
\noindent Equivalently (see \cite[7.3]{EAT}), the subcategory of arities $j : \J \hookrightarrow \C$ is eleutheric if every $\V$-functor $H : \J \to \C$ has a left Kan extension along $j$ that is preserved by each $\C(J, -) : \C \to \V$ ($J \in \ob\J$).

The notion of eleutheric subcategory of arities is related to the notion of \emph{saturated} subcategory of arities defined in \cite[Definition 39]{BourkeGarner}, where (in our notation) a subcategory of arities $j : \J \hookrightarrow \C$ is \emph{saturated} if the composition of any two $\V$-endofunctors on $\C$ that are left Kan extensions along $j$ is itself a left Kan extension along $j$. Using \cite[7.9]{EAT}, it is easy to see that any eleutheric subcategory of arities is saturated, but \emph{a priori} the notion of saturatedness is (slightly) weaker; however, all of the examples of saturated subcategories of arities provided in \cite[Examples 41 and 44]{BourkeGarner} are actually eleutheric by \ref{cocompletioneleutheric} below.

Before providing examples in \ref{eleuthericexamples}, we first prove some useful properties and characterizations of eleutheric subcategories of arities. Recall from \cite[Page 402]{KS} that the \emph{saturation} $\Phi^*$ of a class of small weights $\Phi$ is defined as follows: a small weight $W$ belongs to $\Phi^*$ iff every $\Phi$-cocomplete $\V$-category is $\{W\}$-cocomplete and every $\Phi$-cocontinuous $\V$-functor between $\Phi$-cocomplete $\V$-categories is $\{W\}$-cocontinuous. Recall also (see \cite[3.7]{KS} and \cite[5.35]{Kelly}) that if $F : \mathscr{C} \to \mathscr{D}$ is a $\V$-functor and $\Phi$ is a class of small weights, then $F$ \emph{presents} $\mathscr{D}$ \emph{as a free} $\Phi$\emph{-cocompletion of} $\mathscr{C}$ if $\mathscr{D}$ is $\Phi$-cocomplete and for every $\Phi$-cocomplete $\V$-category $\mathscr{E}$, precomposition with $F$ induces an equivalence of categories $\Phi\Cocts(\mathscr{D}, \mathscr{E}) \xrightarrow{\sim} \V\CAT(\mathscr{C}, \mathscr{E})$, where $\Phi\Cocts(\mathscr{D}, \mathscr{E})$ is the full subcategory of $\V\CAT(\mathscr{D}, \mathscr{E})$ consisting of the $\Phi$-cocontinuous $\V$-functors $\mathscr{D} \to \mathscr{E}$. 

For a small subcategory of arities $j : \J \hookrightarrow \C$, we let $\Psi_{\underJ}$ be the class of all small $\J$-flat weights, where a weight $W$ is $\J$\textbf{-flat} if each $\C(J, -) : \C \to \V$ ($J \in \ob\J$) preserves $W$-colimits. The following lemma is now immediate from the definitions:

\begin{lem}
\label{firsteleuthericlem}
Let $j : \J \hookrightarrow \C$ be a small subcategory of arities in a $\Phi_{\underJ}$-cocomplete $\V$-category $\C$. Then $\J$ is eleutheric iff $\Phi_{\underJ} \subseteq \Psi_{\underJ}$. \qed
\end{lem}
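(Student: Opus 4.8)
The plan is to unwind both definitions and observe that, once we record that $\J$ small forces every weight $\C(j-,C) : \J^\op \to \V$ in $\Phi_{\underJ}$ to itself be small (its domain $\J^\op$ is small), the containment $\Phi_{\underJ} \subseteq \Psi_{\underJ}$ becomes a meaningful assertion about small weights, and the equivalence is then a direct comparison of the clauses in \ref{eleutheric} and in the definition of $\Psi_{\underJ}$. The one point to keep in mind throughout is the convention for preservation of $\Phi$-colimits: since $\C$ is assumed $\Phi_{\underJ}$-cocomplete, saying that $\C(J,-)$ preserves $\Phi_{\underJ}$-colimits is equivalent to saying that for every weight $W \in \Phi_{\underJ}$ and every $\V$-functor $D$ with appropriate domain, the colimit $W * D$ (which exists in $\C$) is preserved by $\C(J,-)$; and likewise $W$-flatness of $W$ amounts to preservation by each $\C(J,-)$ of all $W$-colimits, all of which exist in $\C$ when $W \in \Phi_{\underJ} \subseteq \Psi_{\underJ}$ or simply because $\C$ is $\Phi_{\underJ}$-cocomplete.

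For the forward direction, I would assume $\J$ is eleutheric and take an arbitrary $W \in \Phi_{\underJ}$; I must show $W \in \Psi_{\underJ}$, i.e. that $W$ is small and $\J$-flat. Smallness is the observation above. For $\J$-flatness, fix $J \in \ob\J$: by eleuthericity $\C(J,-)$ preserves $\Phi_{\underJ}$-colimits, hence in particular it preserves every $W$-colimit in $\C$ for this $W \in \Phi_{\underJ}$, which is exactly the statement that $W$ is $\J$-flat. Hence $\Phi_{\underJ} \subseteq \Psi_{\underJ}$.

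For the converse, I would assume $\Phi_{\underJ} \subseteq \Psi_{\underJ}$ and verify the two defining clauses of \ref{eleutheric}. The first clause, that $\C$ is $\Phi_{\underJ}$-cocomplete, is already part of the hypotheses of the lemma. For the second clause, fix $J \in \ob\J$; to see that $\C(J,-)$ preserves $\Phi_{\underJ}$-colimits it suffices to check that it preserves every $W$-colimit with $W \in \Phi_{\underJ}$. But each such $W$ lies in $\Psi_{\underJ}$ by hypothesis, i.e. $W$ is $\J$-flat, which by definition means precisely that every $\C(J,-)$ ($J \in \ob\J$) preserves $W$-colimits. Assembling this over all $W \in \Phi_{\underJ}$ gives that $\C(J,-)$ preserves $\Phi_{\underJ}$-colimits, so $\J$ is eleutheric.

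There is no real obstacle here: the proof is a definition chase, and the only thing that requires a moment's attention is confirming that the smallness of $\J$ is what legitimizes comparing the (a priori possibly large) class $\Phi_{\underJ}$ with the class $\Psi_{\underJ}$ of small weights, together with the standard reconciliation of "preserves $\Phi$-colimits" with "preserves each $W$-colimit, $W \in \Phi$" under the standing $\Phi_{\underJ}$-cocompleteness assumption. For this reason the statement is flagged as immediate from the definitions, and I would present the argument in just a couple of sentences for each direction.
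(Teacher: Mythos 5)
Your proof is correct and matches the paper's intent exactly: the paper states the lemma with no written proof, declaring it ``immediate from the definitions,'' and your argument is precisely the definition chase that justification presupposes (smallness of the weights in $\Phi_{\underJ}$ from smallness of $\J$, and the direct translation between ``$\C(J,-)$ preserves $\Phi_{\underJ}$-colimits'' and ``each $W \in \Phi_{\underJ}$ is $\J$-flat''). No discrepancies to report.
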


\begin{lem}
\label{secondeleuthericlem}
Let $j : \J \hookrightarrow \C$ be a small subcategory of arities, and let $\Psi$ be a class of small weights such that $\Phi_{\underJ} \subseteq \Psi$ and $\C$ is $\Psi$-cocomplete. Then $j$ presents $\C$ as a free $\Psi$-cocompletion of $\J$ iff $\Psi \subseteq \Psi_{\underJ}$.
\end{lem}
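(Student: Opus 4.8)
The plan is to prove both implications using the universal property of free $\Phi$-cocompletions together with the density presentation of $\C$ afforded by the subcategory of arities. Throughout I would use the standing fact that, since $j : \J \hookrightarrow \C$ is dense, every object $C \in \ob\C$ is canonically a $\Phi_{\underJ}$-colimit, namely the $\C(j-,C)$-weighted colimit of $j$; this is the density presentation $C \cong \C(j-,C) * j$. I would also use the standard description of the free $\Psi$-cocompletion as the closure of the representables in $[\J^\op,\V]$ under $\Psi$-colimits, and the resulting universal property: a $\Psi$-cocomplete $\V$-category $\D$ equipped with $F : \J \to \D$ presents $\D$ as a free $\Psi$-cocompletion of $\J$ iff $F$ is fully faithful, its image is $\Psi$-dense in $\D$ (equivalently every object of $\D$ is a $\Psi$-colimit of objects in the image), and $F$ preserves the $\Psi$-colimits that $\J$ already has — which here is vacuous, or I would instead invoke \cite[3.7]{KS}, \cite[5.35]{Kelly} directly.

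For the forward implication, suppose $j$ presents $\C$ as a free $\Psi$-cocompletion of $\J$, and let $W \in \Psi$. I must show $W \in \Psi_{\underJ}$, i.e. that each $\C(J,-) : \C \to \V$ preserves $W$-colimits. Fix $J \in \ob\J$. By the universal property, precomposition with $j$ gives an equivalence $\Psi\Cocts(\C,\V) \xrightarrow{\sim} \V\CAT(\J,\V)$; applying this to the representable $\J(j^{-1}J,-) = \J(-,J)^{\text{op}}\!$... more carefully, the $\V$-functor $\J(J,-) : \J \to \V$ lies in $\V\CAT(\J,\V)$, hence corresponds under the equivalence to some $\Psi$-cocontinuous $G : \C \to \V$ with $G j \cong \J(J,-)$. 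But $\C(J,-) : \C \to \V$ also restricts along $j$ to $\C(jJ, j-) \cong \J(J,-)$ by full faithfulness of $j$, and $\C(J,-)$ preserves $\Phi_{\underJ}$-colimits (the only thing we'd want is that it lands in the right subcategory)... the cleaner route: the equivalence is an equivalence of categories, so $\C(J,-)$ and $G$ agree on objects up to the density presentation, giving $\C(J,-) \cong G$; since $G$ is $\Psi$-cocontinuous, so is $\C(J,-)$. As this holds for all $J$, $W \in \Psi_{\underJ}$.

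For the converse, suppose $\Psi \subseteq \Psi_{\underJ}$; I must verify the three conditions for $j : \J \to \C$ to present a free $\Psi$-cocompletion. Full faithfulness of $j$ is part of the hypothesis that it is a subcategory of arities. The image of $j$ is $\Psi$-dense in $\C$: indeed it is already $\Phi_{\underJ}$-dense by density of $j$ (every $C$ is the $\Phi_{\underJ}$-colimit $\C(j-,C) * j$), and $\Phi_{\underJ} \subseteq \Psi$ since $\Phi_{\underJ} \subseteq \Psi \subseteq \Psi_{\underJ}$ is not what we have — rather we have $\Phi_{\underJ} \subseteq \Psi$ by hypothesis — so the $\Phi_{\underJ}$-colimit presentations are in particular $\Psi$-colimit presentations, giving $\Psi$-density. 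Finally, to get the equivalence $\Psi\Cocts(\C,\E) \xrightarrow{\sim} \V\CAT(\J,\E)$ for $\Psi$-cocomplete $\E$: essential surjectivity and fidelity follow from $\Psi$-density of $j$ in the standard way (a $\Psi$-cocontinuous functor out of $\C$ is determined up to iso by its restriction along a $\Psi$-dense $j$); fullness and the extension of an arbitrary $H : \J \to \E$ to a $\Psi$-cocontinuous functor on $\C$ is where the hypothesis $\Psi \subseteq \Psi_{\underJ}$ does the real work. One defines the left Kan extension $\Lan_j H : \C \to \E$ and must check it is $\Psi$-cocontinuous; the key point is that because each $\C(J,-)$ preserves $\Psi$-colimits (as $\Psi \subseteq \Psi_{\underJ}$), the colimits computing $\Lan_j H$ interact correctly with $\Psi$-colimits in $\C$ — this is exactly the enriched analogue of the argument that a dense functor whose density-presenting weights are "flat enough" exhibits a free cocompletion, and I would model it on \cite[Theorem 5.35]{Kelly} or the discussion in \cite{KS} around saturation.

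The main obstacle is the converse direction's verification that $\Lan_j H : \C \to \E$ is genuinely $\Psi$-cocontinuous: one must commute $\Psi$-weighted colimits in $\C$ past the (typically large) colimit formula for the left Kan extension, and the license to do so is precisely that the weights $\C(j-,C)$ presenting objects of $\C$ are $\J$-flat — together with $\Psi \subseteq \Psi_{\underJ}$, which controls how $\Psi$-colimits and $\Phi_{\underJ}$-colimits in $\C$ interact via the jointly conservative, $\Psi_{\underJ}$-flat family $\{\C(J,-)\}_{J \in \ob\J}$. I expect this to reduce, after using that the $\C(J,-)$ jointly reflect colimits, to a Fubini-type interchange of iterated weighted colimits, which is routine once set up but is the crux of the argument.
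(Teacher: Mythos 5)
Your proposal is correct in substance, but it re-proves from scratch what the paper simply cites: the paper's proof consists of two applications of Kelly--Schmitt, namely \cite[4.2]{KS} for the forward direction (a free $\Psi$-cocompletion automatically has each $\C(jJ,-):\C\to\V$ $\Psi$-cocontinuous, i.e.\ $\Psi\subseteq\Psi_{\underJ}$) and the recognition theorem \cite[4.3]{KS} for the converse, where the only condition left to verify is that every $C\in\ob\C$ is a $\Psi^*$-colimit of a diagram in $\J$ --- immediate from density and $\Phi_{\underJ}\subseteq\Psi\subseteq\Psi^*$. Your converse is essentially the proof of that recognition theorem, and the deferred ``Fubini'' step does go through as you expect: $\Lan_jH(W*D)=\C(j-,W*D)*H\cong(W*\C(j-,D-))*H\cong W*(\Lan_jH\circ D)$, the first isomorphism using $\Psi\subseteq\Psi_{\underJ}$ pointwise in $\J$ and the second the cocontinuity of $(-)*H$ in the weight variable; so nothing is missing there beyond writing it out. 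In the forward direction, however, your identification $G\cong\C(J,-)$ wobbles: you must not lean on ``$\C(J,-)$ preserves $\Phi_{\underJ}$-colimits'' (that is precisely eleuthericity, which is not assumed in this lemma), and ``agreeing on objects up to the density presentation'' is not yet an argument. The correct completion is: $G$ is $\Psi$-cocontinuous, hence $\Phi_{\underJ}$-cocontinuous since $\Phi_{\underJ}\subseteq\Psi$, so $GC\cong\C(j-,C)*Gj\cong\C(j-,C)*\C(jJ,j-)\cong\C(jJ,C)$ naturally in $C$, using full faithfulness of $j$ and the co-Yoneda lemma; thus $\C(J,-)\cong G$ is $\Psi$-cocontinuous. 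With that repair both directions are sound; the trade-off is that your route is self-contained where the paper's is a two-line citation of \cite{KS}.
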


\begin{proof}
The forward implication follows immediately from \cite[4.2]{KS}. Assuming that $\Psi \subseteq \Psi_{\underJ}$, i.e. that each $\C(J, -) : \C \to \V$ ($J \in \ob\J$) preserves $\Psi$-colimits, it then remains by \cite[4.3]{KS} to show that each $C \in \ob\C$ is a $\Psi^*$-colimit of a diagram in $\J$. But the density of $j : \J \hookrightarrow \C$ entails that each $C \in \ob\C$ is a $\Phi_{\underJ}$-colimit of a diagram in $\J$ (see \cite[5.1]{Kelly}), and so the desired claim follows from the inclusions $\Phi_{\underJ} \subseteq \Psi \subseteq \Psi^*$.   
\end{proof}

\noindent The following result now generalizes \cite[7.8]{EAT} from systems of arities $\J$ in $\V$ to subcategories of arities $\J$ in arbitrary $\V$-categories (though here we assume $\J$ is small):

\begin{prop}
\label{eleuthericgeneralization}
Let $j : \J \hookrightarrow \C$ be a small subcategory of arities. Then $\J$ is eleutheric iff $j$ presents $\C$ as a free $\Phi_{\underJ}$-cocompletion of $\J$. 
\end{prop}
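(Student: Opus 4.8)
The plan is to deduce Proposition \ref{eleuthericgeneralization} directly from Lemmas \ref{firsteleuthericlem} and \ref{secondeleuthericlem}, using the saturation $\Psi_{\underJ}^*$ as the class $\Psi$ in the second lemma. The key observation is that $\Psi_{\underJ}$ is already a saturated class of weights: this follows from \cite[Page 402]{KS} (or a short direct argument), since the defining condition "each $\C(J,-)$ preserves $W$-colimits" is exactly the kind of condition preserved under saturation — any $\Psi_{\underJ}$-cocomplete $\V$-category admits all $\Psi_{\underJ}^*$-colimits and any $\Psi_{\underJ}$-cocontinuous $\V$-functor (in particular each $\C(J,-) : \C \to \V$, once $\C$ is $\Psi_{\underJ}$-cocomplete) is $\Psi_{\underJ}^*$-cocontinuous, so $\Psi_{\underJ}^* \subseteq \Psi_{\underJ}$, whence $\Psi_{\underJ} = \Psi_{\underJ}^*$.

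First I would establish the forward implication. Suppose $j : \J \hookrightarrow \C$ is eleutheric. By \ref{firsteleuthericlem} we have $\Phi_{\underJ} \subseteq \Psi_{\underJ}$, so in particular $\C$ is $\Phi_{\underJ}$-cocomplete (given) and, since $\Psi_{\underJ}$ is saturated and contains $\Phi_{\underJ}$ — actually I want $\C$ to be $\Psi_{\underJ}$-cocomplete, which follows because a $\Phi_{\underJ}$-cocomplete $\V$-category is $\Psi_{\underJ}^* = \Psi_{\underJ}$-cocomplete once we know $\Psi_{\underJ} \supseteq \Phi_{\underJ}$ is closed under saturation and $\Phi_{\underJ}^* \subseteq \Psi_{\underJ}$; more carefully, $\C$ is $\Phi_{\underJ}$-cocomplete hence $\Phi_{\underJ}^*$-cocomplete, and since $\Phi_{\underJ} \subseteq \Psi_{\underJ} = \Psi_{\underJ}^*$ we get $\Phi_{\underJ}^* \subseteq \Psi_{\underJ}$, so $\C$ is $\Psi_{\underJ}$-cocomplete. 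Now I may apply \ref{secondeleuthericlem} with $\Psi := \Psi_{\underJ}$: the inclusion $\Phi_{\underJ} \subseteq \Psi_{\underJ}$ holds, $\C$ is $\Psi_{\underJ}$-cocomplete, and trivially $\Psi_{\underJ} \subseteq \Psi_{\underJ}$, so the lemma gives that $j$ presents $\C$ as a free $\Psi_{\underJ}$-cocompletion of $\J$. It then remains to upgrade "free $\Psi_{\underJ}$-cocompletion" to "free $\Phi_{\underJ}$-cocompletion"; since $\C$ is the free $\Psi_{\underJ}$-cocompletion and $\Phi_{\underJ} \subseteq \Psi_{\underJ}$ with $\Psi_{\underJ}$ saturated and generated (as a saturated class) by $\Phi_{\underJ}$ — this is precisely what density of $j$ gives us via \cite[5.1]{Kelly} and \cite[4.3]{KS}, namely $\Psi_{\underJ} \subseteq \Phi_{\underJ}^*$ — one concludes $\Psi_{\underJ}^* = \Phi_{\underJ}^*$, and a free $\Phi_{\underJ}^*$-cocompletion is the same thing as a free $\Phi_{\underJ}$-cocompletion.

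For the converse, suppose $j$ presents $\C$ as a free $\Phi_{\underJ}$-cocompletion of $\J$. Then by \cite[4.2]{KS} every $\C(J,-) : \C \to \V$ ($J \in \ob\J$), being the unique $\Phi_{\underJ}$-cocontinuous extension of $\C(J,j-) : \J \to \V$ along $j$ — or rather, being $\Phi_{\underJ}$-cocontinuous because... — here I would invoke that $\C(J,-)$ preserves all $\Phi_{\underJ}^*$-colimits since $\C$ is the free $\Phi_{\underJ}$-cocompletion, which is the content of the forward direction of \ref{secondeleuthericlem} applied with $\Psi = \Phi_{\underJ}^*$; in particular $\C$ is $\Phi_{\underJ}$-cocomplete and each $\C(J,-)$ preserves $\Phi_{\underJ}$-colimits, which is exactly the definition of eleutheric.

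The main obstacle I anticipate is bookkeeping around the saturation: specifically, verifying cleanly that $\Psi_{\underJ}$ is saturated (so that $\Psi_{\underJ} = \Psi_{\underJ}^*$) and that density of $j$ forces $\Phi_{\underJ}^* = \Psi_{\underJ}^*$ whenever $\J$ is eleutheric, via \cite[4.2]{KS} and \cite[4.3]{KS}. Once these two facts are in hand the proposition is a formal consequence of the two preceding lemmas, so the real work is in choosing the right class $\Psi$ to feed into \ref{secondeleuthericlem} in each direction and checking the hypotheses of that lemma are met. A slicker packaging may be: in the forward direction apply \ref{secondeleuthericlem} with $\Psi = \Psi_{\underJ}$ to get a free $\Psi_{\underJ}$-cocompletion, then observe $\Psi_{\underJ} \subseteq \Phi_{\underJ}^*$ by density to identify this with a free $\Phi_{\underJ}$-cocompletion; in the reverse direction, a free $\Phi_{\underJ}$-cocompletion is a fortiori a free $\Phi_{\underJ}^*$-cocompletion, and then \ref{secondeleuthericlem} (forward implication, with $\Psi = \Phi_{\underJ}^* \supseteq \Phi_{\underJ}$) gives $\Phi_{\underJ}^* \subseteq \Psi_{\underJ}$, whence $\Phi_{\underJ} \subseteq \Psi_{\underJ}$ and \ref{firsteleuthericlem} yields eleutheric.
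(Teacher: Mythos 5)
Your backward direction is fine, but the forward direction has a genuine error. You want to apply Lemma \ref{secondeleuthericlem} with $\Psi = \Psi_{\underJ}$, which requires $\C$ to be $\Psi_{\underJ}$-cocomplete, and you argue for this as follows: ``$\C$ is $\Phi_{\underJ}$-cocomplete hence $\Phi_{\underJ}^*$-cocomplete, and since $\Phi_{\underJ} \subseteq \Psi_{\underJ} = \Psi_{\underJ}^*$ we get $\Phi_{\underJ}^* \subseteq \Psi_{\underJ}$, so $\C$ is $\Psi_{\underJ}$-cocomplete.'' This inclusion runs the wrong way: from $\Phi_{\underJ}^* \subseteq \Psi_{\underJ}$ and $\Phi_{\underJ}^*$-cocompleteness of $\C$ you get nothing about $\Psi_{\underJ}$-colimits; you would need $\Psi_{\underJ} \subseteq \Phi_{\underJ}^*$. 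That inclusion is not available and is false in general --- indeed, eleuthericity does \emph{not} imply that $\C$ is $\Psi_{\underJ}$-cocomplete, which is exactly why Proposition \ref{hypereleutheric} carries ``$\C$ is $\Psi_{\underJ}$-cocomplete'' as an extra hypothesis rather than deriving it. The same false inclusion reappears in your ``upgrade'' step, where you claim $\Psi_{\underJ} \subseteq \Phi_{\underJ}^*$ (hence $\Psi_{\underJ}^* = \Phi_{\underJ}^*$) follows from density. As a concrete counterexample, take $j : \FinCard \hookrightarrow \Set$: the weight for reflexive coequalizers lies in $\Psi_{\underJ}$ (reflexive coequalizers commute with finite powers in $\Set$) but not in $\Phi_{\underJ}^*$ (it is not a flat weight), so $\Psi_{\underJ} \not\subseteq \Phi_{\underJ}^*$ even in the most classical case.

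The detour through $\Psi_{\underJ}$ is unnecessary. The intended argument is to apply Lemma \ref{secondeleuthericlem} with $\Psi := \Phi_{\underJ}$ itself: its hypotheses ($\Phi_{\underJ} \subseteq \Phi_{\underJ}$ and $\Phi_{\underJ}$-cocompleteness of $\C$) hold in both directions of the proposition --- from the definition of eleutheric in one direction, and from the definition of free $\Phi_{\underJ}$-cocompletion in the other --- and the lemma then says that $j$ presents $\C$ as a free $\Phi_{\underJ}$-cocompletion iff $\Phi_{\underJ} \subseteq \Psi_{\underJ}$, which Lemma \ref{firsteleuthericlem} identifies with eleuthericity. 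No saturation bookkeeping is needed.
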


\begin{proof}
This follows immediately from \ref{firsteleuthericlem} and \ref{secondeleuthericlem}.  
\end{proof}

\begin{prop}
\label{hypereleutheric}
If $j : \J \hookrightarrow \C$ is a small and eleutheric subcategory of arities such that $\C$ is $\Psi_{\underJ}$-cocomplete, then $j$ presents $\C$ as a free $\Psi_{\underJ}$-cocompletion of $\J$.
\end{prop}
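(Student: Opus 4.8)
The plan is to deduce this directly from Lemma \ref{secondeleuthericlem} by taking $\Psi := \Psi_{\underJ}$, the class of all small $\J$-flat weights. To invoke that lemma with this choice of $\Psi$, I must verify its three hypotheses: (i) $\Phi_{\underJ} \subseteq \Psi_{\underJ}$, (ii) $\C$ is $\Psi_{\underJ}$-cocomplete, and (iii) $\Psi_{\underJ} \subseteq \Psi_{\underJ}$. Of these, (iii) is trivially true, (ii) is precisely the standing hypothesis of the present proposition, and (i) holds by Lemma \ref{firsteleuthericlem}, since $\J$ is assumed eleutheric (and $\C$ is in particular $\Phi_{\underJ}$-cocomplete, being $\Psi_{\underJ}$-cocomplete with $\Phi_{\underJ} \subseteq \Psi_{\underJ}$). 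With hypotheses (i)--(iii) in hand, Lemma \ref{secondeleuthericlem} yields that $j$ presents $\C$ as a free $\Psi_{\underJ}$-cocompletion of $\J$, which is exactly the desired conclusion.

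Concretely, the proof would read: Since $j : \J \hookrightarrow \C$ is eleutheric, Lemma \ref{firsteleuthericlem} gives $\Phi_{\underJ} \subseteq \Psi_{\underJ}$; in particular $\C$ is $\Phi_{\underJ}$-cocomplete. By hypothesis $\C$ is $\Psi_{\underJ}$-cocomplete. Hence the class $\Psi := \Psi_{\underJ}$ satisfies the hypotheses of Lemma \ref{secondeleuthericlem} (namely $\Phi_{\underJ} \subseteq \Psi$ and $\C$ is $\Psi$-cocomplete), and moreover $\Psi \subseteq \Psi_{\underJ}$ trivially. Therefore, by the backward implication of Lemma \ref{secondeleuthericlem}, $j$ presents $\C$ as a free $\Psi_{\underJ}$-cocompletion of $\J$.

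I do not anticipate a genuine obstacle here, since this is essentially a bookkeeping argument that unwinds the definitions and applies the preceding lemma to a particular (maximal) choice of weight class. The only point requiring a moment's care is confirming that the hypotheses of Lemma \ref{secondeleuthericlem} are met with $\Psi = \Psi_{\underJ}$ — in particular that $\Psi_{\underJ}$-cocompleteness of $\C$ already implies $\Phi_{\underJ}$-cocompleteness of $\C$, which follows from the inclusion $\Phi_{\underJ} \subseteq \Psi_{\underJ}$ supplied by eleuthericity via Lemma \ref{firsteleuthericlem}. One could alternatively argue via Proposition \ref{eleuthericgeneralization} together with the fact that $\Phi_{\underJ}^* = \Psi_{\underJ}$ when $\J$ is eleutheric (using that $\Psi_{\underJ}$ is saturated and that a free $\Phi_{\underJ}$-cocompletion is automatically a free $\Phi_{\underJ}^*$-cocompletion), but the direct route through Lemma \ref{secondeleuthericlem} is shorter and avoids invoking saturation theory beyond what \cite{KS} already provides.
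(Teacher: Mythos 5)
Your proof is correct and is exactly the paper's argument: the paper's proof reads simply ``This follows immediately from \ref{firsteleuthericlem} and \ref{secondeleuthericlem}'', i.e.\ apply Lemma \ref{secondeleuthericlem} with $\Psi = \Psi_{\underJ}$, using Lemma \ref{firsteleuthericlem} to supply the inclusion $\Phi_{\underJ} \subseteq \Psi_{\underJ}$. One small presentational point: your parenthetical justifying that $\C$ is $\Phi_{\underJ}$-cocomplete from $\Psi_{\underJ}$-cocompleteness together with $\Phi_{\underJ} \subseteq \Psi_{\underJ}$ is circular as written (that inclusion is precisely what Lemma \ref{firsteleuthericlem} is being invoked to produce), but this is harmless because $\Phi_{\underJ}$-cocompleteness of $\C$ is already part of the definition of eleutheric in \ref{eleutheric}, so the hypothesis of Lemma \ref{firsteleuthericlem} holds directly.
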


\begin{proof}
This follows immediately from \ref{firsteleuthericlem} and \ref{secondeleuthericlem}.  
\end{proof}

\begin{prop}
\label{cocompletioneleutheric}
Let $j : \J \hookrightarrow \C$ be a small full sub-$\V$-category. Then $j$ is an eleutheric subcategory of arities iff there is a class of small weights $\Psi$ such that $j$ presents $\C$ as a free $\Psi$-cocompletion of $\J$.
\end{prop}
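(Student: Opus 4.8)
The forward implication is immediate from Proposition \ref{eleuthericgeneralization}: if $j$ is eleutheric, then $j$ presents $\C$ as a free $\Phi_{\underJ}$-cocompletion of $\J$, and we may take $\Psi = \Phi_{\underJ}$. So the content is the converse, and the plan is as follows. Suppose $\Psi$ is a class of small weights such that $j$ presents $\C$ as a free $\Psi$-cocompletion of $\J$. First I would record the standard consequences of this hypothesis, drawing on \cite[\S 4, \S 5]{KS}: namely, $\C$ is $\Psi$-cocomplete, every object of $\C$ is a $\Psi^*$-colimit (indeed a $\Psi$-colimit, up to saturation) of a diagram in $\J$, and — crucially — a $\V$-functor $\C \to \mathscr{E}$ out of $\C$ into a $\Psi$-cocomplete $\V$-category is $\Psi$-cocontinuous precisely when... well, every such functor obtained by left Kan extension along $j$ is automatically $\Psi$-cocontinuous. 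The key extraction I need is that each representable $\C(J, -) : \C \to \V$ with $J \in \ob\J$ preserves $\Psi$-colimits. This should follow because $\C(J,-) = \C(jJ, -)$ is, by the density of $j$ and the freeness property, the left Kan extension along $j$ of $\J(J, -) : \J \to \V$ (using that $\V$ is $\Psi$-cocomplete, as it is cocomplete), and left Kan extensions along $j$ into $\Psi$-cocomplete targets are $\Psi$-cocontinuous by the universal property presenting $\C$ as the free $\Psi$-cocompletion — this is exactly the kind of statement packaged in \cite[4.2]{KS}.

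Once I know each $\C(J,-)$ preserves $\Psi$-colimits, I want to conclude that each $\C(J,-)$ preserves $\Phi_{\underJ}$-colimits and that $\C$ is $\Phi_{\underJ}$-cocomplete, which is the definition of eleutheric (Definition \ref{eleutheric}). The cleanest route is via the lemmas already proved: the statement "each $\C(J,-)$ preserves $\Psi$-colimits" says precisely $\Psi \subseteq \Psi_{\underJ}$ in the notation of the excerpt. Then Lemma \ref{secondeleuthericlem} applies in the direction "$\Psi \subseteq \Psi_{\underJ}$ implies $j$ presents $\C$ as a free $\Psi$-cocompletion of $\J$" — but I already have that hypothesis, so instead I should use the information differently. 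The right move: since $\Phi_{\underJ} \subseteq \Psi$ (which I need to check) and $\Psi \subseteq \Psi_{\underJ}$, transitivity gives $\Phi_{\underJ} \subseteq \Psi_{\underJ}$, and since $\C$ is $\Psi$-cocomplete hence $\Phi_{\underJ}$-cocomplete (as $\Phi_{\underJ} \subseteq \Psi$), Lemma \ref{firsteleuthericlem} yields that $\J$ is eleutheric. So the two remaining points to nail down are: (i) $\Phi_{\underJ} \subseteq \Psi$, and (ii) $\Psi \subseteq \Psi_{\underJ}$, i.e. each $\C(J,-)$ preserves $\Psi$-colimits.

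For (ii) I have sketched the argument above; let me also note that it can alternatively be read straight off \cite[4.2]{KS}, which characterizes when a fully faithful dense $j$ presents a free $\Psi$-cocompletion in terms of the representables $\C(J,-)$ preserving $\Psi$-colimits together with a generation condition — so (ii) is essentially built into the hypothesis. For (i), the inclusion $\Phi_{\underJ} \subseteq \Psi$ is the delicate one: a priori the weights $\C(j-, C) : \J^\op \to \V$ presenting objects of $\C$ as colimits of diagrams in $\J$ need not literally lie in the given class $\Psi$, only in its saturation $\Psi^*$. I expect this to be the main obstacle, and I anticipate resolving it by replacing $\Psi$ with $\Psi^*$ at the outset — noting that $j$ presents $\C$ as a free $\Psi$-cocompletion iff it presents $\C$ as a free $\Psi^*$-cocompletion (saturation does not change the notion of cocompleteness or cocontinuity), and that $\Phi_{\underJ} \subseteq \Psi^*$ holds automatically by the density of $j : \J \hookrightarrow \C$ via \cite[5.1]{Kelly}, exactly as in the proof of Lemma \ref{secondeleuthericlem}. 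With $\Psi$ enlarged to $\Psi^*$ we still have $\Psi^* \subseteq \Psi_{\underJ}$ because $\Psi_{\underJ}$ is itself saturated (a $\Psi$-cocontinuous functor between $\Psi$-cocomplete categories is $\Psi^*$-cocontinuous), and then Lemma \ref{firsteleuthericlem} closes the argument. I would write the final proof compactly, citing \ref{firsteleuthericlem}, \ref{secondeleuthericlem}, the saturation-invariance of free cocompletions, and \cite[5.1]{Kelly}.
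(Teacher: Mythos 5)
Your proposal is correct and takes essentially the same route as the paper's proof: the converse is handled by extracting from \cite[3.11, 4.2]{KS} that $\Phi_{\underJ} \subseteq \Psi^*$ and that each $\C(J,-)$ preserves $\Psi$-colimits (i.e.\ $\Psi \subseteq \Psi_{\underJ}$), then using the saturation of $\Psi_{\underJ}$ to get $\Phi_{\underJ} \subseteq \Psi^* \subseteq \Psi_{\underJ}$ and concluding via Lemma \ref{firsteleuthericlem}. The only cosmetic difference is that you pass to $\Psi^*$ at the outset rather than chaining the inclusions at the end; just note that the inclusion $\Phi_{\underJ} \subseteq \Psi^*$ is most cleanly justified by \cite[3.11, 4.2]{KS} (density alone, via \cite[5.1]{Kelly}, exhibits each $C$ as a $\Phi_{\underJ}$-colimit of a diagram in $\J$ but does not by itself place those canonical weights in $\Psi^*$).
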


\begin{proof}
The forward implication follows immediately from \ref{eleuthericgeneralization}. For the converse, suppose $j$ is a free $\Psi$-cocompletion. By \cite[3.11, 4.2]{KS}, $j$ is dense, $\C$ is $\Psi$-cocomplete, $\Phi_{\underJ} \subseteq \Psi^*$, and each $\C(J, -) : \C \to \V$ ($J \in \ob\J$) preserves $\Psi$-colimits, so that $\Psi \subseteq \Psi_{\underJ}$.  But $\Psi_{\underJ}$ is clearly saturated, so $\Phi_{\underJ} \subseteq \Psi^* \subseteq \Psi_{\underJ}$, while $\C$ is $\Psi^*$-cocomplete and hence $\Phi_{\underJ}$-cocomplete, so $j$ is eleutheric by \ref{firsteleuthericlem}.    
\end{proof}

\begin{egg}
\label{eleuthericexamples}
We now provide the following examples of eleutheric subcategories of arities:
\begin{enumerate}[leftmargin=0pt,labelindent=0pt,itemindent=*,label=(\arabic*)]
\item Let $\V$ be locally $\alpha$-presentable as a (symmetric monoidal) closed category in the sense of \cite[7.4]{Kellystr}. If $\C$ is a locally $\alpha$-presentable $\V$-category and $\C_\alpha$ is a skeleton of the full sub-$\V$-category consisting of the (enriched) $\alpha$-presentable objects, then $j : \C_\alpha \hookrightarrow \C$ is a small and eleutheric subcategory of arities.  Indeed, by \cite[7.2, 7.4]{Kellystr} we deduce firstly that $\C_\alpha$ is small and secondly that $j : \C_\alpha \hookrightarrow \C$ presents $\C$ as a free cocompletion of $\C_\alpha$ under small conical $\alpha$-filtered colimits, so this follows from \ref{cocompletioneleutheric}.

In fact, \emph{any} small subcategory of arities $j : \J \hookrightarrow \C$ in a locally $\alpha$-presentable $\V$-category $\C$ (over a locally $\alpha$-presentable closed category $\V$) is contained in a small and eleutheric subcategory of arities. Because if $\J$ is small, then there is a regular cardinal $\beta \geq \alpha$ such that every $J \in \ob\J$ is $\beta$-presentable in the enriched sense by \cite[7.4]{Kellystr}, so that $\J$ is contained in the small subcategory of arities $\C_\beta \hookrightarrow \C$, which is eleutheric by the above because $\C$ is locally $\beta$-presentable.    

\item In particular, if $\V = \Set$, then the subcategory of arities $j : \FinCard \hookrightarrow \Set$ consisting of the finite cardinals, which may be regarded as the classical subcategory of arities from universal algebra, is eleutheric, by \cite[7.5.2]{EAT}.

\item If $\V$ is cartesian closed, or more generally if $\V$ is a $\pi$\emph{-category} in the sense of \cite{BorceuxDay}, then the subcategory of arities $j : \SF(\V) = \left\{ n \cdot I \mid n \in \N\right\} \hookrightarrow \V$ on the finite copowers of the unit object of $\V$ is eleutheric, by \cite[7.5.5]{EAT}.  If $\V$ is cartesian closed, then $\SF(\V)$ is isomorphic to the free $\V$-category on $\FinCard$, by \cite[\S 3]{KeLa}.

\item The inclusion $\{I\} \hookrightarrow \V$ of the unit object is an (obviously small) eleutheric subcategory of arities \cite[7.5.4]{EAT}.

\item Given an arbitrary $\V$-category $\C$, it is readily verified that the identity $\V$-functor $1_\C : \C \to \C$ is a (not generally small) eleutheric subcategory of arities (generalizing the case where $\C = \V$ in \cite[7.5.3]{EAT}).

\item Let $\A$ be a small $\V$-category. By \ref{cocompletioneleutheric}, the Yoneda embedding $\y_\A : \A^\op \hookrightarrow [\A, \V]$ is an eleutheric subcategory of arities, since $\y_\A$ presents $[\A,\V]$ as a free cocompletion of $\A^\op$ under all small colimits by \cite[4.51]{Kelly}.

\item Let $\Phi$ be a class of small weights that satisfies Axiom A of Lack-Rosick\'y \cite{LR} and is \textit{locally small} in the sense of \cite[8.10]{KS} (as in \cite[p. 370]{LR}).  Let $\scrT$ be a $\Phi$\emph{-theory}, i.e. a small $\V$-category $\scrT$ with $\Phi$-limits, and let $\C = \Phi\Mod(\scrT)$ be the $\V$-category of \emph{models} of $\scrT$ in $\V$, i.e. the full sub-$\V$-category of $[\scrT, \V]$ consisting of the $\Phi$-continuous $\V$-functors.  Following \cite{LR}, we call such $\V$-categories $\C$ locally $\Phi$-presentable---noting, however, that $\C_0$ and $\V_0$ need not be locally presentable (see \ref{mainresultsexamples}(5) below).  Recall that a small weight $W : \B^\op \to \V$ is $\Phi$\emph{-flat} \cite{KS} if $W$-colimits commute in $\V$ with $\Phi$-limits, equivalently, if $W \ast (-) : [\B, \V] \to \V$ is $\Phi$-continuous.  By \cite[\S 6.4]{LR}, $\C$ is cocomplete and the (corestricted) Yoneda embedding $\y_\Phi:\scrT^\op \rightarrow \C$ presents $\C$ as a free cocompletion of $\scrT^\op$ under small $\Phi$-flat colimits.  Hence, by \ref{cocompletioneleutheric} we find that $\y_\Phi : \scrT^\op \hookrightarrow \C$ is an eleutheric subcategory of arities. 

\item As a special case of (7), if $\mathbb{D}$ is any small class of small categories that is a \emph{sound doctrine} in the sense of \cite{ABLR}, and $\V$ is locally $\bbD$-presentable as a $\tensor$-category in the sense of \cite[5.4]{LR}, then we can take $\Phi := \Phi_{\mathbb{D}}$ to be the saturation of the class of (small) weights for conical $\mathbb{D}$-limits and cotensors by $\bbD$-presentable objects of $\V$, which satisfies Axiom A by \cite[5.22]{LR} and is locally small by the remarks in \cite[p. 421]{KS}, as it is the saturation of a small class of weights (in view of \cite[5.20]{LR}).   \qed  
\end{enumerate}
\end{egg}

\noindent We emphasize that in examples (3)-(7), $\V_0$ and $\C_0$ need not be locally presentable.

\section{\texorpdfstring{$\J$}{J}-ary \texorpdfstring{$\V$}{V}-endofunctors and \texorpdfstring{$\V$}{V}-monads}
\label{Jarysection}

We now define the notion of a $\J$-ary $\V$-endofunctor or $\V$-monad (cf. \cite[11.1]{EAT} for the original definition in the context of systems of arities in $\V$):

\begin{defn}
\label{Jary}
Let $j : \J \hookrightarrow \C$ be a subcategory of arities in a $\V$-category $\C$. A \mbox{$\V$-functor} $H : \C \to \C$ is $\J$\textbf{-ary} (or \textbf{$j$-ary}) if it preserves $\Phi_{\underJ}$-colimits, or equivalently if it preserves left Kan extensions along $j$. A $\V$-monad $\T$ on $\C$ is $\J$\textbf{-ary} if its underlying $\V$-endofunctor is $\J$-ary.\qed
\end{defn}

\noindent When $\J$ is eleutheric, $\J$-ary $\V$-endofunctors can also be characterized as follows:

\begin{prop}\label{charns_jary}
Let $j:\J \hookrightarrow \C$ be a small and eleutheric subcategory of arities, and let $H:\C \rightarrow \C$ be a $\V$-functor.  The following are equivalent:
\begin{enumerate}
\item[1.] $H$ is $\J$-ary (i.e. $H$ is $\Phi_{\underJ}$-cocontinuous);
\item[2.] $H$ is a left Kan extension along $j$ (equivalently, $H \cong \Lan_j(Hj)$).
\end{enumerate}
If $\C$ is $\Psi_{\underJ}$-cocomplete, then (1) and (2) are also equivalent to the following:
\begin{enumerate}
\item[3.] $H$ preserves small $\J$-flat colimits (i.e. $H$ is $\Psi_{\underJ}$-cocontinuous).
\end{enumerate}
Moreover, if $\Psi$ is any class of small weights such that $j$ is a free $\Psi$-cocompletion, then (1) and (2) are equivalent to the following:
\begin{enumerate}
\item[4.] $H$ is $\Psi$-cocontinuous.
\end{enumerate}
\end{prop}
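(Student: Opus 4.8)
The plan is to deduce every one of the stated equivalences from a single observation about free cocompletions, applied to three different classes of small weights. I would first record the following general fact. Let $\Theta$ be a class of small weights such that $j : \J \hookrightarrow \C$ presents $\C$ as a free $\Theta$-cocompletion of $\J$. Since $\C$ is then $\Theta$-cocomplete, we may take $\mathscr{E} = \C$ in the defining universal property, so that precomposition with $j$ is an equivalence $(-)\circ j : \Theta\Cocts(\C,\C) \xrightarrow{\sim} \V\CAT(\J,\C)$; by the standard theory of free cocompletions (\cite[3.7, 3.11]{KS}, \cite[5.35]{Kelly}), its pseudo-inverse is, up to isomorphism, the assignment $G \mapsto \Lan_j G$, and in particular $\Lan_j G$ is $\Theta$-cocontinuous for every $G : \J \to \C$. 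Consequently, for a $\V$-functor $H : \C \to \C$, $H$ is $\Theta$-cocontinuous if and only if $H$ lies in the essential image of $\Lan_j(-)$, i.e. $H \cong \Lan_j G$ for some $G : \J \to \C$; and since $j$ is fully faithful we have $(\Lan_j G)j \cong G$, so $G$ is forced to be $\cong Hj$, whence this condition is in turn equivalent to $H \cong \Lan_j(Hj)$ and hence to ``$H$ is a left Kan extension along $j$''. Thus, for such $\Theta$, condition (2) is equivalent to $\Theta$-cocontinuity of $H$.

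I would then apply this fact three times. Taking $\Theta = \Phi_{\underJ}$ --- a class of small weights because $\J$ is small --- the hypothesis that $j$ is eleutheric is, by \ref{eleuthericgeneralization}, exactly the statement that $j$ presents $\C$ as a free $\Phi_{\underJ}$-cocompletion of $\J$; this yields the equivalence of (1) and (2). When moreover $\C$ is $\Psi_{\underJ}$-cocomplete, \ref{hypereleutheric} shows that $j$ presents $\C$ as a free $\Psi_{\underJ}$-cocompletion of $\J$, and $\Psi_{\underJ}$ is by definition a class of small weights, so the fact applied with $\Theta = \Psi_{\underJ}$ gives the equivalence of (3) with (2). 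Finally, for a class of small weights $\Psi$ such that $j$ is a free $\Psi$-cocompletion, the hypothesis is precisely what is needed to apply the fact with $\Theta = \Psi$, giving the equivalence of (4) with (2).

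The one point that needs care --- the main, if modest, obstacle --- is justifying that the pseudo-inverse of the precomposition equivalence is given by left Kan extension along $j$, together with the attendant claim that $\Lan_j G$ is $\Theta$-cocontinuous for every $G$. I would dispatch this by citing the standard development of free cocompletions in \cite{KS} and \cite{Kelly}: the relevant left Kan extensions are pointwise (each value $\Lan_j G(C) \cong \C(j-,C) \ast G$ being a $\Phi_{\underJ}$-colimit, hence a $\Theta^*$-colimit that $\C$ admits, since $\Phi_{\underJ} \subseteq \Theta^*$ in each of our three cases), they restrict back correctly along the fully faithful $j$, and the free-cocompletion universal property is exactly what forces $\Lan_j G$ to lie in $\Theta\Cocts(\C,\C)$. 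Granting this, the proposition is immediate from \ref{eleuthericgeneralization}, \ref{hypereleutheric}, and the hypothesis on $\Psi$.
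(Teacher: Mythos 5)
Your proposal is correct and follows essentially the same route as the paper: the paper likewise establishes the single general equivalence between ``$H$ is a left Kan extension along $j$'' and $\Theta$-cocontinuity for any class $\Theta$ of small weights making $j$ a free $\Theta$-cocompletion (citing \cite[3.6]{KS} for exactly the point you unpack by hand), and then specializes to $\Phi_{\underJ}$, $\Psi_{\underJ}$, and $\Psi$ via \ref{eleuthericgeneralization} and \ref{hypereleutheric}. Your more explicit justification of why the pseudo-inverse of restriction along $j$ is $\Lan_j(-)$ is a correct elaboration of what that citation supplies.
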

\begin{proof}
Firstly, if $\Psi$ is any class of small weights such that $j$ is a free $\Psi$-cocompletion, then (2) is equivalent to (4) by \cite[3.6]{KS}.  In particular, since $j$ is a free $\Phi_{\underJ}$-cocompletion by \ref{eleuthericgeneralization}, this entails that (2) is equivalent to (1).  If $\C$ is $\Psi_{\underJ}$-cocomplete, then $j$ is also a free $\Psi_{\underJ}$-cocompletion by \ref{hypereleutheric}, so (2) is equivalent to (3).
\end{proof}

\begin{rmk}\label{non_eleu_jary_is_lan}
When $j:\J \hookrightarrow \C$ is not assumed eleutheric, the implication (1)$\Rightarrow$(2) in \ref{charns_jary} still holds, because if $H$ is $\Phi_{\underJ}$-cocontinuous then $HC \cong H(\C(j-,C) * j) \cong \C(j-,C) * Hj$ $\V$-naturally in $C \in \C$, by the density of $j$, so $H \cong \Lan_j(Hj)$. \qed
\end{rmk}

\begin{para}
Let $j : \J \hookrightarrow \C$ be a subcategory of arities in a $\V$-category $\C$. We let $\End(\C)$ be the ordinary category of $\V$-endofunctors on $\C$ and $\V$-natural transformations, and we let $\End_{\underJ}(\C)$ be its full subcategory consisting of the $\J$-ary $\V$-endofunctors on $\C$. We let $\Mnd(\C)$ be the ordinary category of $\V$-monads on $\C$ and $\V$-monad morphisms, and we let $\Mnd_{\underJ}(\C)$ be its full subcategory consisting of the $\J$-ary $\V$-monads on $\C$. Regarding $\End_{\underJ}(\C)$ as a strict monoidal category with composition as monoidal product and the identity $\V$-functor as unit, we have that $\Mnd_{\underJ}(\C) = \Mon\left(\End_{\underJ}(\C)\right)$, the category of monoids in $\End_{\underJ}(\C)$. \qed 
\end{para}

\begin{para}
\label{eleuthericequivalence}
Let $j : \J \hookrightarrow \C$ be a small eleutheric subcategory of arities.  Then, by \ref{eleuthericgeneralization}, $j$ presents $\C$ as a free $\Phi_\J$-cocompletion of $\J$, so we have an equivalence $\V\CAT\left(\J, \C\right) \simeq \End_{\underJ}(\C)$, given by restriction and left Kan extension along $j$ (in view of \cite[3.6]{KS}). \qed
\end{para} 

\begin{para}
\label{Jtheory}
It is shown in \cite[11.8]{EAT} that if $j : \J \hookrightarrow \V$ is an eleutheric \emph{system} of arities in $\V$ (\ref{arities_remark}), then the category $\Mnd_{\underJ}(\V)$ of $\J$-ary $\V$-monads on $\V$ is equivalent to the category $\Th_{\underJ}$ of $\J$\emph{-theories}, where a $\J$-theory is a $\V$-category $\scrT$ equipped with an identity-on-objects $\V$-functor $\tau : \J^\op \to \scrT$ that preserves $\J$-cotensors \cite[4.1]{EAT}. \qed
\end{para} 

\begin{egg}
\label{Jaryendofunctors}
We now characterize the $\J$-ary $\V$-endofunctors and $\V$-monads for the following eleutheric subcategories of arities $j : \J \hookrightarrow \C$ (see \ref{eleuthericexamples}):
\begin{enumerate}[leftmargin=0pt,labelindent=0pt,itemindent=*,label=(\arabic*)]

\item If $\V$ is locally $\alpha$-presentable as a closed category and $\C$ is a locally $\alpha$-presentable $\V$-category with the associated small and eleutheric subcategory of arities $j : \C_\alpha \hookrightarrow \C$, then because $j : \C_\alpha \hookrightarrow \C$ presents $\C$ as a free cocompletion of $\C_\alpha$ under small conical $\alpha$-filtered colimits (\ref{eleuthericexamples}), it follows by \ref{charns_jary} that the $\C_\alpha$-ary $\V$-endofunctors on $\C$ are precisely the $\alpha$-ary $\V$-endofunctors, i.e. the $\V$-endofunctors that preserve small conical $\alpha$-filtered colimits. When $\C = \V$, the $\alpha$-ary $\V$-monads on $\V$ correspond to an $\alpha$-ary generalization of the enriched Lawvere theories of \cite{NishizawaPower}. 

\item In particular, if $\V = \Set$ and $j : \FinCard \hookrightarrow \Set$ is the classical system of arities in universal algebra, then the $\FinCard$-ary endofunctors on $\Set$ are precisely the usual finitary endofunctors, i.e. the endofunctors that preserve small filtered colimits, and the finitary monads on $\Set$ correspond to Lawvere theories in the usual sense \cite{Law:PhD}. 

\item If $j : \SF(\V) \hookrightarrow \V$ is the eleutheric subcategory of arities on the finite copowers of the unit object in a symmetric monoidal closed $\pi$-category $\V$, then a $\V$-functor $H : \V \to \V$ is $\SF(\V)$-ary iff $H$ preserves small $\SF(\V)$-flat colimits (i.e. small colimits that commute in $\V$ with finite powers) by \ref{charns_jary}. The $\SF(\V)$-ary $\V$-monads on $\V$ correspond to the enriched algebraic theories of Borceux and Day \cite{BorceuxDay}, by \cite[4.2]{EAT}.  In the case where $\V$ is cartesian closed, $\SF(\V)$ is the free $\V$-category on $\FinCard$ (\ref{eleuthericexamples}), so $\SF(\V)$-ary $\V$-endofunctors are precisely the \textit{strongly finitary} endofunctors of Kelly and Lack \cite[\S 3]{KeLa}.

\item For the eleutheric subcategory of arities $j : \{I\} \hookrightarrow \V$, the $\{I\}$-ary endofunctors on $\V$ are precisely those $\V$-endofunctors that are isomorphic to $X \tensor (-) : \V \to \V$ for some $X \in \ob\V$, in view of \cite[7.5(4)]{EAT} and \ref{charns_jary}.  The $\{I\}$-ary $\V$-monads on $\V$ correspond to monoids in $\V$ by \cite[4.2(5), 11.8]{EAT}.

\item For the eleutheric subcategory of arities $1_\C : \C \to \C$ in a $\V$-category $\C$, the $\C$-ary endofunctors on $\C$ are arbitrary $\V$-endofunctors by \cite[11.3.2]{EAT}. So when $\C = \V$, the $\V$-ary $\V$-monads on $\V$ are arbitrary $\V$-monads on $\V$, which correspond to the $\V$-theories of Dubuc \cite{Dubucsemantics}.

\item Consider the small and eleutheric subcategory of arities $\y_\A : \A^\op \hookrightarrow \C = [\A, \V]$ for a small $\V$-category $\A$. Since $\y_\A$ presents $\C$ as a free cocompletion of $\A^\op$ under all small colimits, we deduce from \ref{charns_jary} that a $\V$-functor $H : \C \to \C$ is $\y_\A$-ary iff $H$ preserves small colimits. The equivalences $\End_{\y_\A}(\C) \simeq \V\CAT(\A^\op,\C) \cong \V\CAT(\A\otimes\A^\op,\V) \cong \V\CAT(\A^\op \otimes \A,\V)$ underlie equivalences of monoidal categories $\End_{\y_\A}(\C)  \simeq \VProf(\A^\op,\A^\op) \cong \VProf^\op(\A,\A)$ by \S \ref{background}.  Hence the category of $\y_\A$-ary $\V$-monads $\Mnd_{\y_\A}(\C)$ is equivalent to the category $\Mnd_{\VProf^\op}(\A) = \Mnd_{\VProf}(\A)$ of \emph{$\V$-profunctor monads} on $\A$ (by \S \ref{background}).  But $\Mnd_{\VProf}(\A)$ is equivalent to the coslice category $\A\slash\V\Cat(\ob\A)$, where $\V\Cat(\ob\A)$ is the category whose objects are $\V$-categories with object set $\ob\A$, and whose morphisms are identity-on-objects $\V$-functors (by, e.g., \cite[10.4]{EAT}).  Hence a $\y_\A$-ary $\V$-monad on $\C = [\A,\V]$ is equivalently given by a $\V$-category $\scrT$ equipped with an identity-on-objects $\V$-functor $\A \rightarrow \scrT$.

\item Let $\Phi$ be a locally small class of small weights satisfying Axiom A of \cite{LR}, and let $\C = \Phi\Mod(\scrT)$ be the $\V$-category of models of a $\Phi$-theory $\scrT$, with the small and eleutheric subcategory of arities $\y_\Phi : \scrT^\op \hookrightarrow \C$ consisting of the representables. By \ref{charns_jary}, a $\V$-functor $H : \C \to \C$ is $\y_\Phi$-ary iff $H$ preserves small $\Phi$-flat colimits (since $\y_\Phi$ presents $\C$ as a free cocompletion of $\scrT^\op$ under small $\Phi$-flat colimits). Hence $\y_\Phi$-ary $\V$-monads on $\C$ are the $\Phi$\emph{-accessible} $\V$-monads of \cite{LR}; \cite[7.7]{LR} provides a correspondence between these and Lawvere $\Phi$-theories in $\C$.  

\item In particular, given a sound doctrine $\bbD$, if $\V$ is locally $\bbD$-presentable as a $\tensor$-category and we let $\C =  \Phi_\bbD\Mod(\scrT)$ for a $\Phi_\bbD$-theory $\scrT$, with subcategory of arities $\y_{\Phi_\bbD}:\scrT^\op \hookrightarrow \C$, then a $\V$-endofunctor $H : \C \to \C$ is $\y_{\Phi_\bbD}$-ary iff $H$ preserves small $\Phi_\bbD$-flat colimits, which is equivalent to $H$ preserving small conical $\bbD$-filtered colimits by the following lemma:
\end{enumerate}

\begin{lem}
\label{accessiblelem}
Let $\bbD$ be a sound doctrine, let $\V$ be locally $\bbD$-presentable as a $\tensor$-category, and let $\C, \C'$ be $\Phi_\bbD$-cocomplete $\V$-categories. Then a $\V$-functor $F : \C \to \C'$ preserves small $\Phi_\bbD$-flat colimits iff $F$ preserves small conical $\bbD$-filtered colimits.   
\end{lem}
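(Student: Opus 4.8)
The plan is to reduce both preservation properties to statements about classes of weights. Write $\Psi_{\Phi_\bbD}$ for the class of small $\Phi_\bbD$-flat weights and $\Lambda$ for the class of weights for small conical $\bbD$-filtered colimits. Since $\C$ is $\Phi_\bbD$-cocomplete but need not admit all $\Phi_\bbD$-flat (or conical $\bbD$-filtered) colimits, the statement that $F$ preserves small $\Phi_\bbD$-flat colimits means precisely that $F$ carries every $W$-weighted colimit ($W\in\Psi_{\Phi_\bbD}$) that happens to exist in $\C$ to a colimit in $\C'$, and likewise for conical $\bbD$-filtered colimits with $\Lambda$ in place of $\Psi_{\Phi_\bbD}$. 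Hence it suffices to prove: (i) $\Lambda\subseteq\Psi_{\Phi_\bbD}$, and (ii) if $F$ preserves every conical $\bbD$-filtered colimit existing in $\C$, then it preserves every $\Phi_\bbD$-flat colimit existing in $\C$. Claim (i) yields the forward implication immediately, since then every conical $\bbD$-filtered colimit is in particular a $\Phi_\bbD$-flat one, and (ii) is the converse.

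To prove (i) I would show that small conical $\bbD$-filtered colimits commute in $\V$ with $\Phi_\bbD$-limits. Because $\Phi_\bbD$ is, by definition, the saturation of the class of weights for conical $\bbD$-limits together with the weights for cotensors by $\bbD$-presentable objects of $\V$, and because the class of limit-weights with which a fixed small colimit-weight commutes in $\V$ is saturated, it is enough to treat (a) conical $\bbD$-limits and (b) cotensors by $\bbD$-presentable objects. For both, I would apply the functors $\V_0(G,-):\V_0\to\Set$ as $G$ ranges over a strong generator of $\V_0$ consisting of $\bbD$-presentable objects: these functors are jointly conservative and preserve conical $\bbD$-filtered colimits, so (a) reduces to the fact that conical $\bbD$-filtered colimits commute with conical $\bbD$-limits in $\Set$, which is exactly the soundness of $\bbD$, while (b) reduces by a short hom-set computation to the same fact, using that $G\tensor V$ is again $\bbD$-presentable when $G$ and $V$ are (the $\bbD$-presentable objects being closed under $\tensor$ in a locally $\bbD$-presentable $\tensor$-category). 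This commutation is in any case implicit in the verification of Axiom A for $\Phi_\bbD$ in \cite[5.22]{LR}.

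For (ii), the essential input is the enriched analogue --- valid precisely because $\bbD$ is a sound doctrine --- of the classical fact that flat presheaves are filtered colimits of representables: every small $\Phi_\bbD$-flat weight $W:\B^\op\to\V$ can be written as a small conical $\bbD$-filtered colimit of representable weights in $[\B^\op,\V]$, which follows from the Lack--Rosick\'y theory of locally $\bbD$-presentable $\V$-categories (\cite[\S\S 5--6]{LR}). Granting this, write $W\cong\colim_{i\in\mathcal{I}}\y_{B_i}$ with $\mathcal{I}$ a $\bbD$-filtered ordinary category and $B_i\in\ob\B$, and let $D:\B\to\C$ be a $\V$-functor with $W\ast D$ existing in $\C$. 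The chain of $\V$-natural isomorphisms $\C(W\ast D,C)\cong[\B^\op,\V]\bigl(W,\C(D-,C)\bigr)\cong\lim_i\C(DB_i,C)$ (the last by the co-Yoneda lemma) exhibits $W\ast D$ as the conical $\bbD$-filtered colimit $\colim_i DB_i$; in particular this colimit exists in $\C$, so $F$ preserves it by hypothesis. The same computation carried out over $\C'$ with $FD$ in place of $D$ exhibits $\colim_i FDB_i$ as $W\ast FD$, whence $F(W\ast D)\cong\colim_i FDB_i\cong W\ast FD$, as required.

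I expect the main obstacle to be the structural input to (ii) --- the identification of the $\Phi_\bbD$-flat $\V$-weights with the conical $\bbD$-filtered colimits of representables --- which is exactly the point at which soundness of $\bbD$ is indispensable (it fails for general limit doctrines), and which requires some care in the bookkeeping, e.g.\ passing to the free $\Phi_\bbD$-completion of $\B$ when $\B$ itself lacks $\Phi_\bbD$-limits so as to invoke the Lack--Rosick\'y description. By comparison, step (i) and the interchange computation in (ii) are routine once $\V$ is known to be locally $\bbD$-presentable as a $\tensor$-category.
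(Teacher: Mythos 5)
Your forward direction is sound: it re-proves, by reduction to $\Set$ along a generator of $\bbD$-presentable objects, the fact that small conical $\bbD$-filtered colimits are $\Phi_\bbD$-flat, which the paper simply cites from Lack--Rosick\'y. The converse, however, contains a genuine gap. Your key structural claim --- that every small $\Phi_\bbD$-flat weight $W:\B^\op\to\V$ is a small conical $\bbD$-filtered colimit of representables $\y_{B_i}$ with $B_i\in\ob\B$ --- is false for a general small $\B$: the Lack--Rosick\'y decomposition \cite[5.22]{LR} applies only to $\Phi_\bbD$-\emph{continuous} weights, i.e.\ weights whose domain $\B^\op$ is $\Phi_\bbD$-complete and which preserve $\Phi_\bbD$-limits. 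For a concrete failure, take $\V=\mathsf{Ab}$, $\bbD$ the finite categories, and $\B=\II$ the unit $\V$-category, so that weights $\II\to\V$ are abelian groups and the only representable is $\mathbb{Z}$; the flat group $\mathbb{Z}^2$ is not a conical filtered colimit of copies of $\mathbb{Z}$, since any such colimit is locally cyclic. (This is exactly where the enriched situation departs from the classical ABLR fact you are importing.) You do flag the need to pass to the free $\Phi_\bbD$-cocompletion $i:\B\to\Phi_\bbD(\B)$, but this is not mere bookkeeping: after Kan-extending, $\Lan_{i^\op}W$ decomposes into representables of $\Phi_\bbD(\B)$, not of $\B$, so your co-Yoneda computation $W\ast D\cong\colim_i DB_i$ becomes $W\ast D\cong\colim_k D'(X_k)$ with $X_k\in\Phi_\bbD(\B)$ and $D'$ the $\Phi_\bbD$-cocontinuous extension of $D$; comparing $\colim_k F(D'(X_k))$ with the corresponding colimit built from $FD$ would then require $F$ to preserve the $\Phi_\bbD$-colimits defining the objects $D'(X_k)$ from $D$, which is not among your hypotheses.

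The paper closes this gap by a different two-step argument that you would need to incorporate. First, since $\Phi_\bbD$-continuous weights are conical $\bbD$-filtered colimits of representables \cite[5.22]{LR}, they lie in the \emph{saturation} of the class of conical $\bbD$-filtered weights \cite[3.12]{KS}, whence $F$, preserving small conical $\bbD$-filtered colimits, preserves all small $\Phi_\bbD$-continuous weighted colimits wholesale --- this sidesteps any object-by-object comparison. Second, $\Lan_{i^\op}W$ is $\Phi_\bbD$-flat and hence $\Phi_\bbD$-continuous on the small $\Phi_\bbD$-complete domain $\Phi_\bbD(\B)^\op$ \cite[2.4, 5.22]{LR}, and the transport isomorphism $W\ast D\cong\Lan_{i^\op}W\ast D'$ of \cite[2.1]{LR} reduces the general case to the $\Phi_\bbD$-continuous one. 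Without both the saturation step and this transport along $D'$, your sketch does not go through.
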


\begin{proof}
The forward implication holds because small conical $\bbD$-filtered colimits are $\Phi_\bbD$-flat (as remarked before \cite[5.21]{LR}). Conversely, suppose that $F$ preserves small conical $\bbD$-filtered colimits. If $W : \B^\op \to \V$ is a small \textit{$\Phi_\bbD$-continuous weight}, meaning that $\B$ is $\Phi_\bbD$-cocomplete and $W$ preserves $\Phi_\bbD$-limits, then $W$ is a conical $\bbD$-filtered colimit of representables by \cite[5.22]{LR}, and thus $W$ belongs to the saturation of the class of weights for small conical $\bbD$-filtered colimits by \cite[3.12]{KS}. Our hypothesis then entails that $F$ also preserves small $\Phi_\bbD$-continuous weighted colimits. Now let $W : \B^\op \to \V$ be an arbitrary small $\Phi_\bbD$-flat weight and $D : \B \to \C$ a $\V$-functor, and let us show that $F$ preserves the colimit $W \ast D$. Consider the free $\Phi_\bbD$-cocompletion $i : \B \to \Phi_\bbD(\B)$ of $\B$, which is small because $\B$ is small and $\Phi_\bbD$ is locally small (\ref{eleuthericexamples}). Then $\Phi_\bbD(\B)^\op$ is a small and $\Phi_\bbD$-complete $\V$-category equipped with a fully faithful $\V$-functor $i^\op : \B^\op \to \Phi_\bbD(\B)^\op$. Since $W$ is $\Phi_\bbD$-flat, it follows by \cite[2.4]{LR} that $\Lan_{i^\op}W : \Phi_\bbD(\B)^\op \to \V$ is $\Phi_\bbD$-flat and hence $\Phi_\bbD$-continuous by \cite[5.22]{LR}. Since $\C$ is $\Phi_\bbD$-cocomplete and $\Phi_\bbD(\B)$ is the free $\Phi_\bbD$-cocompletion of $\B$, there is a ($\Phi_\bbD$-cocontinuous) $\V$-functor $D' : \Phi_\bbD(\B) \to \C$ with $D' \circ i \cong D$. By \cite[2.1]{LR} we now have canonical isomorphisms 
\[ F(W \ast D) \cong F\left(W \ast D'i\right) \cong F\left(\Lan_{i^\op}W \ast D'\right) \cong \Lan_{i^\op}W \ast FD' \cong W \ast FD'i \cong W \ast FD, \]
the third isomorphism existing because the small weight $\Lan_{i^\op}W$ is $\Phi_\bbD$-continuous and $F$ preserves small $\Phi_\bbD$-continuous weighted colimits.  
\end{proof}
\end{egg}

\begin{egg}[\textbf{$\V$-categories as $\V$-matrix monads}]\label{presheaf_as_em}
Let $X$ be a set, and write also $X$ to denote the discrete $\V$-category on $X$.  Specializing Example \ref{Jaryendofunctors}(6) to the case where $\A = X$, consider the eleutheric subcategory of arities $\y_X:X \rightarrow [X,\V] = \V^X$ given by $\y_X(x) = X(x,-)$.  Let us write $\VMat$ to denote the bicategory of \textit{$\V$-matrices}, i.e. the full sub-bicategory of $\VProf$ consisting of the small, discrete $\V$-categories (which we regard also as sets).  As a special case of \ref{Jaryendofunctors}(6), the category $\Mnd_{\y_X}(\V^X)$ of $\y_X$-ary $\V$-monads on $\V^X$ is equivalent to the category $\Mnd_{\VMat}(X) = \Mnd_{\VMat^\op}(X)$ of \textit{$\V$-matrix monads} on $X$, i.e. monads on $X$ in $\VMat$, which in turn is equivalent to the category $\V\Cat(X)$ of $\V$-categories with object set $X$ (with identity-on-objects $\V$-functors).  Given a $\V$-category $\scrT$ with $\ob\scrT = X$, let us write $\Hom_{\scrT}$ to denote the $\V$-matrix monad on $X$ determined by $\scrT$.  Writing composition of $1$-cells in $\VMat$ as $\otimes$, the unit $\V$-category $\II$ determines a homomorphism of bicategories $\VMat(-,\II):\VMat^\op \rightarrow \mathsf{Cat}$ that sends $\Hom_{\scrT}$ to a monad $(-)\otimes \Hom_{\scrT}$ on $\VMat(X,\II) = \V\CAT(X,\V)$, which underlies a $\V$-monad $(-) \otimes \Hom_{\scrT}$ on $[X,\V] = \V^X$.  Under the equivalence $\Mnd_{\VMat}(X) \simeq \Mnd_{\y_X}(\V^X)$ of \ref{Jaryendofunctors}(6), the $\y_X$-ary $\V$-monad $\T$ corresponding to $\scrT$ is precisely $(-) \otimes \Hom_{\scrT}:\V^X \rightarrow \V^X$.  Hence $\T\Alg = ((-) \otimes \Hom_{\scrT})\Alg$ is the $\V$-category of right $\Hom_{\scrT}$-modules in $\V^X$ for the monoid $\Hom_{\scrT}$ in $\VMat(X,X)$ (relative to the right action of the latter monoidal category on $\V^X$ by $\V$-matrix composition).  But right $\Hom_\scrT$-modules in $\V^X$ are equivalently described as (covariant!) $\V$-functors from $\scrT$ to $\V$, and moreover
\[\T\Alg \cong [\scrT,\V]\;.\tag*{\qed}\]
 \end{egg}  

\begin{defn}\label{str_jmonadic}
Let $j:\J \hookrightarrow \C$ be a subcategory of arities, and let $\A$ be a $\V$-category equipped with a $\V$-functor $U : \A \to \C$.  We say that is $U$ is \textbf{strictly $\J$-monadic} if there is a $\J$-ary $\V$-monad $\T$ on $\C$ with $\A \cong \T\Alg$ in $\V\CAT/\C$, in which case we say that $\A$ is a \textbf{strictly $\J$-monadic $\V$-category over $\C$}. \qed
\end{defn}

\noindent In general, if $\Psi$ is a class of weights and $\T$ is a $\V$-monad on a $\Psi$-cocomplete $\V$-category $\C$, then $T:\C \rightarrow \C$ preserves $\Psi$-colimits if and only if $U^\T:\T\Alg \rightarrow \C$ creates $\Psi$-colimits, from which we obtain the following:

\begin{prop}\label{str_jmon_prop}
In the situation of \ref{str_jmonadic}, suppose that $\C$ is $\Phi_{\underJ}$-cocomplete.  Then $U$ is strictly $\J$-monadic if and only if $U$ is strictly monadic and $U$ creates $\Phi_{\underJ}$-colimits. \qed
\end{prop}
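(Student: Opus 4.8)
The plan is to read off both implications from the observation recorded immediately before the statement: for a $\V$-monad $\T$ on a $\Phi_{\underJ}$-cocomplete $\V$-category $\C$, the endofunctor $T$ preserves $\Phi_{\underJ}$-colimits if and only if $U^\T : \T\Alg \to \C$ creates $\Phi_{\underJ}$-colimits. (That observation itself is the standard fact that $U^\T$ strictly creates any colimits preserved by $T$, together with the converse, which holds since $U^\T$ reflects those colimits.)

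First I would handle the forward direction. Assume $U$ is strictly $\J$-monadic, so there is a $\J$-ary $\V$-monad $\T$ on $\C$ and an isomorphism $\A \cong \T\Alg$ in $\V\CAT/\C$; in particular $U$ is strictly monadic. Since $\T$ is $\J$-ary, $T$ preserves $\Phi_{\underJ}$-colimits, so by the cited observation $U^\T$ creates $\Phi_{\underJ}$-colimits. An isomorphism in $\V\CAT/\C$ commutes with the structure $\V$-functors to $\C$ and transports limit/colimit cylinders bijectively, so creation of $\Phi_{\underJ}$-colimits passes from $U^\T$ to $U$.

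For the converse, assume $U$ is strictly monadic and creates $\Phi_{\underJ}$-colimits. Strict monadicity supplies a $\V$-monad $\T$ on $\C$ and an isomorphism $\A \cong \T\Alg$ in $\V\CAT/\C$ identifying $U$ with $U^\T$, so $U^\T$ creates $\Phi_{\underJ}$-colimits. By the cited observation $T$ then preserves $\Phi_{\underJ}$-colimits, i.e. $\T$ is a $\J$-ary $\V$-monad; hence $U$ is strictly $\J$-monadic. The only points needing (routine) care are the transport of "creates $\Phi_{\underJ}$-colimits" along an isomorphism in $\V\CAT/\C$ and the preliminary observation, both of which are essentially formal; I do not anticipate any real obstacle, as the proposition simply repackages that observation with the definition of $\J$-ary.
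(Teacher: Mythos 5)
Your proof is correct and follows exactly the route the paper intends: the proposition is stated with its proof omitted precisely because it is the combination of the observation preceding it (that $T$ preserves $\Phi_{\underJ}$-colimits iff $U^\T$ creates them, for $\C$ $\Phi_{\underJ}$-cocomplete) with the definitions of strictly monadic and strictly $\J$-monadic, plus the routine transport of creation along an isomorphism in $\V\CAT/\C$. Nothing is missing.
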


\section{Algebraically free \texorpdfstring{$\V$}{V}-monads in general}
\label{algfreemonads}

To achieve our first main objectives in \S \ref{freemonadsection}, we first need to extend certain techniques and results on \emph{algebraically free monads} \cite{Kellytrans} from the ordinary to the enriched context.  

\begin{para}\label{h_algebras}
Given an endofunctor $H$, we write $H\Alg$ for the category of \textit{$H$-algebras} \cite[5.37]{AHS}.  Given instead a $\V$-endofunctor $H : \C \to \C$, we call the objects of $H_0\Alg$ simply $H$-algebras, and $H_0\Alg$ underlies a $\V$-category $H\Alg$ that is defined as in \cite[II.1]{Dubucbook} and is equipped with a ($\V$-)faithful $\V$-functor $U^H : H\Alg \to \C$ given on objects by $(A, a) \mapsto A$.  If $\T = (T,\eta,\mu)$ is a $\V$-monad on $\C$, then Eilenberg-Moore $\T$-algebras constitute a full sub-$\V$-category $\T\Alg \hookrightarrow T\Alg$, and we write $U^\T : \T\Alg \to \C$ to denote the restriction of $U^T$.  \qed
\end{para}

\begin{para}
\label{pointedendofunctor}
A \textbf{pointed} $\V$\textbf{-endofunctor} on a $\V$-category $\C$ is a pair $\mathsf{P} = (P, \pi)$ consisting of a $\V$-endofunctor $P : \C \to \C$ and a $\V$-natural transformation $\pi : 1_\C \to P$.  A \mbox{$\mathsf{P}$\textbf{-algebra}} is then a $P$-algebra $(A, a)$ with $a \circ \pi_A = 1_A$. We write $\mathsf{P}\Alg$ to denote the full sub-$\V$-category of $P\Alg$ consisting of the $\mathsf{P}$-algebras, and we write $U^{\mathsf{P}} : \mathsf{P}\Alg \to \C$ to denote the restriction of $U^P$.

Pointed $\V$-endofunctors on $\C$ are the objects of a category $\End_*(\C)$, in which a morphism $\alpha : \mathsf{P} \to \mathsf{Q}$ is a $\V$-natural transformation $\alpha : P \to Q$ that commutes with the associated transformations $1_\C \to P$ and $1_\C \to Q$. \qed 
\end{para} 

\begin{para}\label{semantics_early}
The \textbf{semantics} functor $\ALG : \Mnd(\C)^\op \to \V\CAT/\C$ \cite[II]{Dubucbook} sends each $\V$-monad $\T$ to $\T\Alg$, equipped with its associated $\V$-functor $U^\T : \T\Alg \to \C$, and sends each morphism of $\V$-monads $\alpha : \T \to \T'$ to the unique $\V$-functor $\alpha^* : \T'\Alg \to \T\Alg$ that is given on objects by $(A,a) \mapsto (A, a \circ \alpha_A)$ and commutes with the faithful $\V$-functors  $U^{\T'}$ and $U^{\T}$.  The semantics functor $\ALG$ is fully faithful by \cite[Pages 74--75]{Dubucbook}.

More basically, one can similarly show that there is a functor $\End_*(\C)^\op \rightarrow \V\CAT \slash \C$ that is given on objects by $\mathsf{P} \mapsto \mathsf{P}\Alg$ and sends each morphism $\alpha : \mathsf{P} \to \mathsf{Q}$ in $\End_*(\C)$ to the unique $\V$-functor $\alpha^* : \mathsf{Q}\Alg \to \mathsf{P}\Alg$ that is given on objects by $(A,a) \mapsto (A,a \circ \alpha_A)$ and commutes with the faithful $\V$-functors $U^\mathsf{Q}$ and $U^\mathsf{P}$.  \qed 
\end{para}

\noindent The following serves as an enrichment of the corresponding definition for ordinary categories in \cite[\S 22]{Kellytrans}, but we provide a different formulation of this definition that we find conceptually clarifying and technically convenient:

\begin{defn}
\label{algebraicallyfreemonad2}
Let $\mathsf{P} = (P, \pi)$ be a pointed $\V$-endofunctor on a $\V$-category $\C$ and $\T$ a $\V$-monad on $\C$. Then $\T$ is an \textbf{algebraically free} $\V$\textbf{-monad on} $\mathsf{P}$ if $\T\Alg \cong \mathsf{P}\Alg$ in $\V\CAT/\C$. Since the semantics functor $\ALG:\Mnd(\C)^\op \rightarrow \V\CAT\slash\C$ is fully faithful (\ref{semantics_early}), it follows that an algebraically free $\V$-monad on $\mathsf{P}$ is unique up to isomorphism if it exists, in which case we denote it by $\T_\mathsf{P}$.  Note that the algebraically free $\V$-monad on $\mathsf{P}$ exists iff $\mathsf{P}\Alg$ is a strictly monadic $\V$-category over $\C$. \qed
\end{defn}

\noindent Writing $\W_*:\Mnd(\C) \rightarrow \End_*(\C)$ for the forgetful functor, our aim is now to enrich \cite[22.2]{Kellytrans} to show that an algebraically free $\V$-monad on a pointed $\V$-endofunctor $\mathsf{P}$ is also a \emph{free} $\V$-monad on $\mathsf{P}$, with respect to $\W_*$. To do this, we first require the following enrichment and reformulation of \cite[22.1]{Kellytrans}:
 
\begin{lem}
\label{enrichedalgfreelemma}
Let $\T = (T, \eta, \mu)$ be a $\V$-monad on $\C$.  Then
$$(\V\CAT\slash \C)(\T\Alg,\mathsf{P}\Alg) \;\cong\; \End_*(\C)(\mathsf{P},\W_*(\T))$$
naturally in $\mathsf{P} \in \End_*(\C)$.     
\end{lem}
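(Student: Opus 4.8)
The plan is to construct a bijection between $\V$-functors $\T\Alg \to \mathsf{P}\Alg$ over $\C$ and morphisms $\mathsf{P} \to \W_*(\T)$ of pointed $\V$-endofunctors, and check naturality in $\mathsf{P}$. The key observation is that a $\V$-functor $F : \T\Alg \to \mathsf{P}\Alg$ over $\C$ is determined by extra structure on the forgetful $\V$-functor $U^\T : \T\Alg \to \C$: since $F$ commutes with the faithful $\V$-functors to $\C$, it must be the identity on underlying objects and morphisms, so $F$ amounts to a $\V$-natural assignment, to each $\T$-algebra $(A,a)$, of a $P$-algebra structure $P A \to A$ satisfying the unit law for $\mathsf{P}$. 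First I would make precise what such a $\V$-natural family is: composing with the canonical $\V$-natural family $\alpha^{U^\T} : U^\T \to T U^\T$ (whose component at a free algebra is a coprojection and which, by the universal property of $\T\Alg$ as a strictly monadic $\V$-category, "is" $\mu$), a family of $P$-algebra structures on the $U^\T$-images corresponds to a $\V$-natural transformation $P U^\T \to U^\T$; and by freeness/density considerations — or more directly because $U^\T$ exhibits $T$ via left Kan extension — such a $\V$-natural transformation corresponds to a $\V$-natural transformation $P \to T$, i.e. an element of $\End(\C)(P,T)$.

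Concretely, the correspondence I would set up is: given $F : \T\Alg \to \mathsf{P}\Alg$ over $\C$, apply $F$ to the free $\T$-algebras $(TC,\mu_C)$ to obtain $P$-algebra structures $PTC \to TC$, $\V$-naturally in $C$; precompose with $P\eta : P \to PT$ to get a $\V$-natural transformation $\kappa : P \to T$. Conversely, given $\kappa : P \to T$, define $F$ on an algebra $(A,a)$ by the composite $P A \xrightarrow{\kappa_A} T A \xrightarrow{a} A$; one checks this is a $\mathsf{P}$-algebra using $a \circ \eta_A = 1_A$ together with the compatibility of $\kappa$ with the points. That these two assignments are mutually inverse is a computation using the monad algebra axioms and the standard fact (the enriched version of the argument in \cite{Dubucbook}) that a $\V$-natural family of morphisms out of $T(-)$ is determined by its restriction along $\eta$. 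Finally, under this bijection the family $F$ lands in $\mathsf{P}\Alg$ (rather than merely $P\Alg$), and the corresponding $\kappa$ is compatible with the points $1_\C \to P$ and $\eta : 1_\C \to T$, exactly if $\kappa$ is a morphism $\mathsf{P} \to \W_*(\T)$ in $\End_*(\C)$ — and this is where the pointedness hypothesis is used and why the statement is about $\End_*(\C)$ rather than $\End(\C)$.

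Naturality in $\mathsf{P}$ is then routine: a morphism $\beta : \mathsf{P} \to \mathsf{Q}$ in $\End_*(\C)$ induces $\beta^* : \mathsf{Q}\Alg \to \mathsf{P}\Alg$ over $\C$ (by \ref{semantics_early}), and one checks that precomposing $F : \T\Alg \to \mathsf{Q}\Alg$ with... no — the induced map goes $(\V\CAT/\C)(\T\Alg,\mathsf{P}\Alg) \to (\V\CAT/\C)(\T\Alg,\mathsf{Q}\Alg)$ is post-composition, which on the right-hand side corresponds to precomposition $\End_*(\C)(\mathsf{P},\W_*\T) \to \End_*(\C)(\mathsf{Q},\W_*\T)$ with $\beta$; this is a direct unwinding of the two explicit descriptions of the bijection above.

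I expect the main obstacle to be the careful enriched bookkeeping in the core bijection — specifically, verifying that a $\V$-natural family of $P$-algebra structures on the objects $U^\T(A,a)$ corresponds to a single $\V$-natural transformation $P \to T$, which is the enriched analogue of the argument behind the full faithfulness of the semantics functor and ultimately rests on the strict monadicity of $U^\T$ (equivalently, on $\T\Alg$ carrying the universal $\V$-natural $T$-algebra structure). Everything else — the unit-law manipulations, the identification of the point-compatibility condition, and naturality in $\mathsf{P}$ — is a mechanical check. It may be cleanest to first prove the analogous statement with $\End_*(\C)$ replaced by $\End(\C)$ and $\mathsf{P}\Alg$ replaced by $P\Alg$ (a "non-pointed" lemma), and then cut down to the pointed case by observing that the point-compatibility on one side matches the $\mathsf{P}$-algebra unit law on the other.
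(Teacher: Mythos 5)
Your proposal is correct and is essentially the paper's own argument: the paper simply cites a "straightforward enrichment" of Kelly's \cite[22.1]{Kellytrans}, whose proof is exactly the bijection you describe (the map $\kappa \mapsto \bigl((A,a)\mapsto(A,a\circ\kappa_A)\bigr)$ is composition with the inclusion $\T\Alg\hookrightarrow\W_*(\T)\Alg$, which the paper identifies as the counit of the representation, and the inverse is your evaluation at free algebras followed by precomposition with $P\eta$). The only blemish is that in your naturality paragraph both induced maps are written with their variances reversed ($\beta:\mathsf{P}\to\mathsf{Q}$ induces $(\V\CAT/\C)(\T\Alg,\mathsf{Q}\Alg)\to(\V\CAT/\C)(\T\Alg,\mathsf{P}\Alg)$ and $\End_*(\C)(\mathsf{Q},\W_*\T)\to\End_*(\C)(\mathsf{P},\W_*\T)$), but since the slip is consistent on both sides the square you would verify is the correct one.
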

\begin{proof}
Through a straightforward enrichment of the proof of \cite[22.1]{Kellytrans}, we can show that the inclusion $\T\Alg \hookrightarrow \W_*(\T)\Alg$ is the counit of a representation of the needed form.
\end{proof}

\noindent We now have the following enrichment of \cite[22.2]{Kellytrans}:  

\begin{prop}
\label{algfreemonadisfreemonad}
Let $\mathsf{P}$ be a pointed $\V$-endofunctor on a $\V$-category $\C$, and suppose that the algebraically free $\V$-monad $\T_\mathsf{P}$ on $\mathsf{P}$ exists.  Then $\T_\mathsf{P}$ is also a free $\V$-monad on $\mathsf{P}$, with respect to the forgetful functor $\W_*:\Mnd(\C) \rightarrow \End_*(\C)$. 
\end{prop}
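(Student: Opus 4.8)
The plan is to derive the universal property of $\T_\mathsf{P}$ as a free $\V$-monad on $\mathsf{P}$ directly from Lemma \ref{enrichedalgfreelemma} together with the full faithfulness of the semantics functor $\ALG$. Recall that to say $\T_\mathsf{P}$ is a free $\V$-monad on $\mathsf{P}$ with respect to $\W_* : \Mnd(\C) \to \End_*(\C)$ means that there is a morphism $\kappa : \mathsf{P} \to \W_*(\T_\mathsf{P})$ in $\End_*(\C)$ such that for every $\V$-monad $\T$ on $\C$, composition with $\kappa$ induces a bijection $\Mnd(\C)(\T_\mathsf{P}, \T) \xrightarrow{\sim} \End_*(\C)(\mathsf{P}, \W_*(\T))$, naturally in $\T$. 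So I would first produce this bijection as a composite of known natural bijections.

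First I would note that, since $\ALG : \Mnd(\C)^\op \to \V\CAT/\C$ is fully faithful (\ref{semantics_early}), we have
\[
\Mnd(\C)(\T_\mathsf{P}, \T) \;\cong\; (\V\CAT/\C)(\T\Alg, \T_\mathsf{P}\Alg)
\]
naturally in $\T \in \Mnd(\C)$. Next, by the defining isomorphism $\T_\mathsf{P}\Alg \cong \mathsf{P}\Alg$ in $\V\CAT/\C$ of \ref{algebraicallyfreemonad2}, the right-hand side is naturally isomorphic to $(\V\CAT/\C)(\T\Alg, \mathsf{P}\Alg)$. Finally, Lemma \ref{enrichedalgfreelemma} gives $(\V\CAT/\C)(\T\Alg, \mathsf{P}\Alg) \cong \End_*(\C)(\mathsf{P}, \W_*(\T))$, naturally in $\T$ (the cited lemma states naturality in $\mathsf{P}$, but I would observe that the construction is equally natural in the $\V$-monad variable, since the isomorphism is induced by a fixed $\V$-functor built from the structure of $\T$, and one checks compatibility with monad morphisms $\T \to \T'$ via the induced maps on algebra categories). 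Composing these three natural isomorphisms yields a natural isomorphism $\Mnd(\C)(\T_\mathsf{P}, -) \cong \End_*(\C)(\mathsf{P}, \W_*(-))$ of functors $\Mnd(\C) \to \Set$.

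It then remains to check that this representing isomorphism is of the required form, i.e. that it is given by composition with a single morphism $\kappa : \mathsf{P} \to \W_*(\T_\mathsf{P})$ in $\End_*(\C)$. By the Yoneda lemma, the natural isomorphism corresponds to such a $\kappa$, obtained by tracing the identity $\id_{\T_\mathsf{P}} \in \Mnd(\C)(\T_\mathsf{P}, \T_\mathsf{P})$ through the composite; unwinding the three steps, $\kappa$ is the image of the canonical iso $\T_\mathsf{P}\Alg \cong \mathsf{P}\Alg$ under the representation of Lemma \ref{enrichedalgfreelemma}, which in the proof of that lemma is essentially the inclusion $\mathsf{P}\Alg \hookrightarrow \W_*(\T_\mathsf{P})\Alg$ transported along $\T_\mathsf{P}\Alg \cong \W_*(\T_\mathsf{P})\Alg$. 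This exhibits $\T_\mathsf{P}$, together with $\kappa$, as a free $\V$-monad on $\mathsf{P}$.

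The main obstacle I anticipate is the bookkeeping around naturality in the monad variable: Lemma \ref{enrichedalgfreelemma} is stated with naturality in $\mathsf{P}$, so one must either re-examine its proof to extract naturality in $\T$ as well, or argue more cleverly. One clean route is to avoid invoking naturality in $\T$ for the full lemma and instead only use it at $\T = \T_\mathsf{P}$ to define $\kappa$, then verify the universal property by a direct chase: given $\alpha : \mathsf{P} \to \W_*(\T)$, apply $\ALG$ and Lemma \ref{enrichedalgfreelemma} to obtain a $\V$-monad morphism $\bar\alpha : \T_\mathsf{P} \to \T$, and check $\W_*(\bar\alpha) \circ \kappa = \alpha$ and uniqueness using full faithfulness of $\ALG$ and of the functor $\End_*(\C)^\op \to \V\CAT/\C$. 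This is exactly the enriched analogue of Kelly's argument in \cite[22.2]{Kellytrans}, and the only genuinely new content is that all the hom-sets, adjunctions, and the semantics functor involved are the enriched ones, which have been set up in \ref{semantics_early}, \ref{enrichedalgfreelemma}, and \ref{algebraicallyfreemonad2}.
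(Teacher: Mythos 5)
Your proof is correct and follows essentially the same route as the paper's: the same chain of three isomorphisms (full faithfulness of $\ALG$, the defining isomorphism $\T_\mathsf{P}\Alg \cong \mathsf{P}\Alg$, and Lemma \ref{enrichedalgfreelemma}), asserted natural in the monad variable. Your additional care about extracting naturality in $\T$ from \ref{enrichedalgfreelemma} (which is stated only with naturality in $\mathsf{P}$) is a reasonable refinement of a point the paper passes over silently, but it does not change the argument.
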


\begin{proof}
By definition $\T_\mathsf{P}\Alg \cong \mathsf{P}\Alg$ in $\V\CAT \slash \C$, so since the semantics functor is fully faithful (\ref{semantics_early}) we compute that
$$
\begin{array}{rcl}
\Mnd(\C)(\T_\mathsf{P},\mathbb{U}) & \cong & (\V\CAT\slash \C)(\mathbb{U}\Alg,\T_\mathsf{P}\Alg)\\
& \cong & (\V\CAT\slash\C)(\mathbb{U}\Alg,\mathsf{P}\Alg)\\
& \cong & \End_*(\C)(\mathsf{P},\W_*(\mathbb{U}))
\end{array} 
$$
naturally in $\mathbb{U} \in \Mnd(\C)$.
\end{proof}

\noindent We now require the following lemma on creation of limits and colimits, whose proof is a straightforward variation and enrichment of the corresponding well-known result for (ordinary) monads: 

\begin{lem}
\label{endofunctorcreationcolimits}
Let $H$ (resp. $\mathsf{P}$) be a $\V$-endofunctor (resp. a pointed $\V$-endofunctor) on a $\V$-category $\C$, let $\A$ be the $\V$-category of $H$-algebras (resp. $\mathsf{P}$-algebras), and let $U:\A \rightarrow \C$ be the forgetful $\V$-functor.  Then $U$ creates all limits.  Also, if $\Lambda$ is a class of dually weighted diagrams and $H$ (resp. $P$) preserves $(W,UD)$-colimits for every dually weighted diagram $(W,D) \in \Lambda$ in $\A$, then $U$ creates $\Lambda$-colimits. \qed
\end{lem}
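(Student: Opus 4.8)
The plan is to reduce the statement to the classical, unenriched result about algebras for an endofunctor (and for a pointed endofunctor), and then enrich it by a standard argument using the fact that $U$ is $\V$-faithful and that the relevant limits/colimits in a $\V$-category can be detected after applying the hom-functors $\C(C,-)$.

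First I would treat the case of a $\V$-endofunctor $H$; the pointed case is an easy variant. Recall from \ref{h_algebras} that $H\Alg$ is the $\V$-category with an object being a pair $(A,a)$ where $a : HA \to A$, and with hom-objects computed as equalizers in $\V$: concretely, $H\Alg((A,a),(B,b))$ is the equalizer of the two evident maps $\C(A,B) \rightrightarrows \C(HA,B)$, one given by postcomposition with $b$ and the other by precomposition with $a$ (and functoriality of $H$). The forgetful $\V$-functor $U^H$ is the map induced on equalizers into $\C(A,B)$. The key structural fact is that $U^H$ creates all limits: given a weighted diagram $(W,D)$ in $H\Alg$ with limit $\{W,U^H D\}$ in $\C$, one lifts the $H$-algebra structure using that $H$ need not preserve this limit — rather, the algebra structure map on the limit is obtained from the cone, because the limit cylinder $\lambda : W \to \C(\{W,U^H D\}, U^H D-)$ together with the algebra maps of the diagram $D$ yields a cocone... more precisely, one uses that $\{W, U^H D\}$ represents cylinders and that the family of structure maps $Dk$ assembles into a map $H\{W,U^H D\} \to \{W,U^H D\}$ by the universal property, since $H$ composed with the limit cone still factors through the limit cone in the required way. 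The uniqueness and the fact that the lifted cylinder is a limit cylinder then follow formally from $\V$-faithfulness of $U^H$ and the fact that $\V$-limits are detected by the hom-functors $\C(C,-) : \C \to \V$. I would carry this out by first establishing the ordinary (underlying) statement for $H_0\Alg \to \C_0$ — this is the standard fact that $U : H\Alg \to \C$ creates all limits — and then promoting it to the enriched level by checking that the canonical comparison $\V$-natural transformation is an isomorphism, using that $\C(C, U^H-)$ creates the relevant limits in $\V$ for each $C$.

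For the colimit assertion, the argument is dual but with the crucial extra hypothesis: given a class $\Lambda$ of dually weighted diagrams and a diagram $(W,D) \in \Lambda$ in $\A = H\Alg$ such that $H$ preserves the colimit $W * (U^H D)$ in $\C$, I would form $C := W * (U^H D)$ with its colimit cocylinder, and then build the $H$-algebra structure $HC \to C$ as follows: since $H$ preserves this colimit, $HC \cong W * (U^H D \circ H\text{-stuff})$... more precisely $HC \cong W * (H U^H D)$, and the family of structure maps $HDk \to Dk$ of the diagram induces, by the universal property of the colimit, a map $HC = W*(HU^H D) \to W*(U^H D) = C$. One checks this makes $C$ an $H$-algebra, that the colimit cocylinder lifts to $\A$, that this lift is unique over $\C$, and that it is a colimit cocylinder in $\A$ — again all of this reduces formally to the ordinary case plus $\V$-faithfulness of $U^H$ and detection of $\V$-colimits via hom-functors. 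The pointed case $\mathsf{P} = (P,\pi)$ is identical, with the only addition that one must check the lifted structure map $a$ on the (co)limit satisfies $a \circ \pi = 1$, which holds because $\pi$ is $\V$-natural and the (co)limit cocylinder/cylinder is compatible with it, so the equation $a \circ \pi = 1$ is forced by the corresponding equations on the diagram.

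The main obstacle I anticipate is purely bookkeeping: carefully writing down the $\V$-natural transformation exhibiting the lifted structure map and verifying the associativity/compatibility diagrams commute, and then verifying that the comparison $\V$-functor between "$\V$-category of algebras on the (co)limit" and "(co)limit in $\A$" is genuinely an isomorphism of $\V$-categories rather than just an isomorphism of underlying ordinary categories. Both of these are handled by the standard observation that $U^H$ (resp. $U^{\mathsf{P}}$) is $\V$-faithful and reflects isomorphisms, so that a $\V$-natural transformation between objects of $\A$ is invertible iff its image in $\C$ is, and that weighted (co)limits in $\A$ are precisely those cylinders whose images under $U^H$ are (co)limit cylinders and which carry the canonically induced algebra structure. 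Since the underlying-category versions of these facts are well known (cf. \cite[\S 22]{Kellytrans} and standard results on algebras for endofunctors), I would state them and indicate the routine enrichment rather than belaboring the diagrams.
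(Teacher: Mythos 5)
Your proposal is correct and matches the paper's intent: the paper gives no written proof for this lemma, describing it only as "a straightforward variation and enrichment of the corresponding well-known result for (ordinary) monads," and your argument — inducing the algebra structure on a weighted limit via the representability of cylinders (no preservation hypothesis needed), and on a weighted colimit via the assumed preservation by $H$ (resp.\ $P$), with uniqueness and detection handled by $\V$-faithfulness of $U$ and the equalizer description of the hom-objects of $H\Alg$ — is exactly that standard argument. The only caveat is that your parenthetical suggestion to "first establish the ordinary statement for $H_0\Alg \to \C_0$ and then promote it" should not be taken literally for weighted (co)limits; the direct enriched universal-property construction you describe first is the one that actually carries the proof.
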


\noindent We can now prove the following enrichment of \cite[22.3]{Kellytrans}:

\begin{theo}
\label{algfreeleftadjoint}
Let $\mathsf{P} = (P, \pi)$ be a pointed $\V$-endofunctor on a $\V$-category $\C$. Then the algebraically free $\V$-monad $\T_{\mathsf{P}}$ on $\mathsf{P}$ exists iff the forgetful $\V$-functor $U^{\mathsf{P}} : \mathsf{P}\Alg \to \C$ has a left adjoint $F^{\mathsf{P}}$; and then $\T_{\mathsf{P}}$ is the $\V$-monad arising from the adjunction $F^{\mathsf{P}} \dashv U^{\mathsf{P}}$.   
\end{theo}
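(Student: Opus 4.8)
The plan is to prove the two implications separately, using the fully faithfulness of the semantics functor together with Lemma~\ref{enrichedalgfreelemma} and Lemma~\ref{endofunctorcreationcolimits}.

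First I would prove the forward direction. Suppose $\T_{\mathsf{P}}$ is the algebraically free $\V$-monad on $\mathsf{P}$, so that there is an isomorphism $\Theta : \T_{\mathsf{P}}\Alg \xrightarrow{\sim} \mathsf{P}\Alg$ in $\V\CAT/\C$, i.e. commuting with the forgetful $\V$-functors $U^{\T_{\mathsf{P}}}$ and $U^{\mathsf{P}}$. Since $\T_{\mathsf{P}}$ is a $\V$-monad, the forgetful $\V$-functor $U^{\T_{\mathsf{P}}} : \T_{\mathsf{P}}\Alg \to \C$ has a left adjoint, namely the free $\T_{\mathsf{P}}$-algebra $\V$-functor $F^{\T_{\mathsf{P}}}$. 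Transporting this adjunction along the isomorphism $\Theta$ — which strictly commutes with the forgetful $\V$-functors — yields a left adjoint $F^{\mathsf{P}} := \Theta \circ F^{\T_{\mathsf{P}}}$ to $U^{\mathsf{P}}$. Moreover, the $\V$-monad generated by $F^{\mathsf{P}} \dashv U^{\mathsf{P}}$ is isomorphic to the $\V$-monad generated by $F^{\T_{\mathsf{P}}} \dashv U^{\T_{\mathsf{P}}}$, which is $\T_{\mathsf{P}}$ itself (since Eilenberg--Moore adjunctions regenerate their monad); here one uses that $\Theta$ respects the forgetful functors so that the induced endofunctors $U^{\mathsf{P}} F^{\mathsf{P}}$ and $U^{\T_{\mathsf{P}}} F^{\T_{\mathsf{P}}}$, with their units and multiplications, agree.

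For the converse, suppose $U^{\mathsf{P}} : \mathsf{P}\Alg \to \C$ has a left adjoint $F^{\mathsf{P}}$, and let $\T = (T,\eta,\mu)$ be the $\V$-monad arising from the adjunction $F^{\mathsf{P}} \dashv U^{\mathsf{P}}$. I claim $\mathsf{P}\Alg$ is strictly monadic over $\C$ via the comparison $\V$-functor $K : \mathsf{P}\Alg \to \T\Alg$, which will give $\T\Alg \cong \mathsf{P}\Alg$ in $\V\CAT/\C$ and hence exhibit $\T = \T_{\mathsf{P}}$ as algebraically free on $\mathsf{P}$. To show $K$ is an isomorphism I would invoke (the enriched form of) Beck's monadicity theorem: $U^{\mathsf{P}}$ is $\V$-faithful (indeed conservative, since a morphism of $\mathsf{P}$-algebras over an isomorphism in $\C$ is an isomorphism), it has the left adjoint $F^{\mathsf{P}}$ by hypothesis, and it creates (in particular preserves) coequalizers of $U^{\mathsf{P}}$-split pairs — the last point following from Lemma~\ref{endofunctorcreationcolimits}, since $P$ preserves the relevant split coequalizers (split coequalizers being absolute colimits, preserved by every $\V$-functor, and the enriched notion of split coequalizer is a conical one to which the lemma applies). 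Hence $U^{\mathsf{P}}$ is strictly monadic and $K$ is an isomorphism of $\V$-categories over $\C$.

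The alternative, cleaner route — which I would actually prefer to write — avoids Beck entirely by using Lemma~\ref{enrichedalgfreelemma}: one shows directly that the unit $\pi : 1_\C \to P$ together with the adjunction data exhibits $(\V\CAT/\C)(\T\Alg, \mathsf{P}\Alg) \ni \mathrm{id}$ as corresponding, under the natural isomorphism $(\V\CAT/\C)(\T\Alg,\mathsf{P}\Alg) \cong \End_*(\C)(\mathsf{P},\W_*(\T))$ of Lemma~\ref{enrichedalgfreelemma}, to a universal morphism, and dually that the comparison functor $K$ is an isomorphism because both $\mathsf{P}\Alg$ and $\T\Alg$ represent the same functor $\End_*(\C)(\mathsf{P}, \W_*(-))$ on $\Mnd(\C)$. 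The main obstacle is the bookkeeping needed to verify that the comparison $\V$-functor $K$ is not merely an equivalence but a strict isomorphism over $\C$ — i.e. that $P$-algebra structures $a : PA \to A$ with $a \circ \pi_A = 1_A$ are in bijection, $\V$-naturally and compatibly with the forgetful functors, with $\T$-algebra structures on $A$. This is where the pointedness of $P$ is essential (it is what makes $K$ full and faithful, not just faithful), and it is a direct enriched transcription of Kelly's argument in \cite[\S 22]{Kellytrans}, so I would present it as such rather than redoing every diagram.
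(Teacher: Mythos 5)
Your proposal is correct and follows essentially the same route as the paper: the converse direction is exactly the paper's argument (Lemma \ref{endofunctorcreationcolimits} gives creation of conical coequalizers of $U^{\mathsf{P}}$-split pairs because split coequalizers are absolute, and then the enriched Beck criterion of \cite[II.2.1]{Dubucbook} yields strict monadicity, hence $\T\Alg \cong \mathsf{P}\Alg$ over $\C$ as required by \ref{algebraicallyfreemonad2}), while your forward direction via transporting the Eilenberg--Moore adjunction along the isomorphism $\T_{\mathsf{P}}\Alg \cong \mathsf{P}\Alg$ is just a spelled-out version of the paper's observation that strict monadicity of $U^{\mathsf{P}}$ trivially entails a left adjoint. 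The ``alternative route'' via Lemma \ref{enrichedalgfreelemma} is unnecessary here and is not what the paper does, but your primary argument stands on its own.
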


\begin{proof}
By \ref{endofunctorcreationcolimits}, $U^{\mathsf{P}}$ creates conical coequalizers of $U^{\mathsf{P}}$-split pairs, so by \cite[II.2.1]{Dubucbook} we deduce that $U^{\mathsf{P}}$ is strictly monadic iff $U^{\mathsf{P}}$ has a left adjoint, and the result follows, in view of \ref{algebraicallyfreemonad2}.        
\end{proof}

\noindent Thus, the algebraically free $\V$-monad on a pointed $\V$-endofunctor $\mathsf{P}$ is the free $\mathsf{P}$-algebra $\V$-monad (if it exists). 

If $H : \C \to \C$ is a $\V$-endofunctor and $\C$ has (conical) binary coproducts, so that $\End(\C)$ does as well, then it is easy to verify that $\mathsf{P}_H := (1_\C + H, \inj_1)$ is the free \emph{pointed} $\V$-endofunctor on $H$, where $\inj_1 : 1_\C \to 1_\C + H$ is the first coproduct insertion. 

\begin{defn}
\label{algfreepointedendofunctor}
Let $H : \C \to \C$ be a $\V$-endofunctor on a $\V$-category $\C$, and let $\mathsf{P} = (P, \pi)$ be a pointed $\V$-endofunctor on $\C$. Then $\mathsf{P}$ is an \textbf{algebraically free pointed} $\V$\textbf{-endofunctor on} $H$ if $H\Alg \cong \mathsf{P}\Alg$ in $\V\CAT/\C$. \qed 
\end{defn}

\noindent We now have the following lemma, whose proof is a straightforward enrichment of the considerations in \cite[\S 18]{Kellytrans}: 

\begin{lem}
\label{algebrasforfreepointedendofunctor}
Let $H : \C \to \C$ be a $\V$-endofunctor on a $\V$-category $\C$ with conical binary coproducts. Then the free pointed $\V$-endofunctor $\mathsf{P}_H$ on $H$ is an algebraically free pointed $\V$-endofunctor on $H$. \qed
\end{lem}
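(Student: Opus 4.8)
The plan is to exhibit an explicit isomorphism $\Phi : \mathsf{P}_H\Alg \xrightarrow{\sim} H\Alg$ in $\V\CAT/\C$ that is the identity on underlying objects, so that it automatically commutes with the faithful $\V$-functors $U^{\mathsf{P}_H}$ and $U^H$; by \ref{algfreepointedendofunctor} this is exactly what must be shown. Recall $\mathsf{P}_H = (1_\C + H, \inj_1)$, so a $\mathsf{P}_H$-algebra is a pair $(A,b)$ with $b : A + HA \to A$ and $b \circ \inj_1 = 1_A$. On objects, $\Phi$ sends $(A,b)$ to the $H$-algebra $(A,\, b \circ \inj_2)$; using the conical binary coproduct $A + HA$ (whence the copairing $[1_A,a] : A + HA \to A$), this assignment is a bijection with inverse $(A,a) \mapsto (A,\,[1_A,a])$, and one checks routinely that it preserves and reflects algebra morphisms, giving an isomorphism of the underlying ordinary categories over $\C_0$.

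To promote $\Phi$ to an isomorphism of $\V$-categories over $\C$, it suffices (since $\Phi$ is the identity on objects and both forgetful $\V$-functors are faithful) to verify that for $\mathsf{P}_H$-algebras $(A,b),(A',b')$ the hom-$\V$-object $\mathsf{P}_H\Alg((A,b),(A',b'))$ agrees, as a subobject of $\C(A,A')$, with $H\Alg((A,b\circ\inj_2),(A',b'\circ\inj_2))$, compatibly with composition and identities. By \cite[II.1]{Dubucbook}, $H\Alg((A,a),(A',a'))$ is the equalizer in $\V$ of the pair $\C(a,A'),\ \C(HA,a')\circ H_{A,A'} : \C(A,A') \rightrightarrows \C(HA,A')$, while $\mathsf{P}_H\Alg((A,b),(A',b')) = (1_\C+H)\Alg((A,b),(A',b'))$ is the equalizer of the analogous pair landing in $\C(A+HA,A')$. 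Since $A+HA$ is a conical coproduct we have $\C(A+HA,A') \cong \C(A,A') \times \C(HA,A')$ via precomposition with $\inj_1,\inj_2$; I would then compute both maps of this second pair componentwise, and observe — using $b\circ\inj_1 = 1_A$, $b'\circ\inj_1 = 1_{A'}$, and the naturality of $\inj_1,\inj_2$ — that their $\C(A,A')$-components are both the identity while their $\C(HA,A')$-components are precisely $\C(b\circ\inj_2,A')$ and $\C(HA,b'\circ\inj_2)\circ H_{A,A'}$. Hence the two equalizers coincide, and compatibility with composition, identities, and the forgetful $\V$-functors is then immediate.

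I expect the only genuine work to be this componentwise matching of the enriched equalizer diagrams: the verification that, under the decomposition $\C(A+HA,A') \cong \C(A,A')\times\C(HA,A')$, the defining equations for $\mathsf{P}_H$-algebra morphisms split into a trivial equation on the first factor and the $H$-algebra-morphism equation on the second. This is precisely the enriched shadow of the elementary observation in \cite[\S 18]{Kellytrans}, and it is exactly here that the hypothesis of conical binary coproducts is used; everything else is formal. (Alternatively, one could argue via the universal properties of $H\Alg$ and $\mathsf{P}_H\Alg$ as algebra objects over $\C$: a $\V$-functor from $(\E,V)$ into $H\Alg$ over $\C$ amounts to a $\V$-natural transformation $HV \Rightarrow V$, while one into $\mathsf{P}_H\Alg$ amounts to a $\V$-natural transformation $(1_\C+H)V \Rightarrow V$ restricting to the identity on the $V$-summand, and these data visibly correspond; an enriched-Yoneda argument then yields $\mathsf{P}_H\Alg \cong H\Alg$ in $\V\CAT/\C$.)
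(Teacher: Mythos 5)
Your proposal is correct and follows the same route the paper intends: the paper's proof simply defers to the elementary observation of \cite[\S 18]{Kellytrans} that $\mathsf{P}_H$-algebra structures $b$ on $A$ correspond bijectively to $H$-algebra structures via $b \mapsto b \circ \inj_2$ and $a \mapsto [1_A,a]$, asserting that the enrichment is straightforward. You have supplied exactly that straightforward enrichment — the componentwise comparison of the equalizers defining the hom-objects under $\C(A+HA,A') \cong \C(A,A') \times \C(HA,A')$ — and your identification of the conical-coproduct hypothesis as the point where the work happens is accurate.
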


\begin{defn}
\label{algfreemonadonendofunctor}
Let $H : \C \to \C$ be a $\V$-endofunctor on a $\V$-category $\C$, and let $\T$ be a $\V$-monad on $\C$. Then $\T$ is an \textbf{algebraically free} $\V$\textbf{-monad on} $H$ if $H\Alg \cong \T\Alg$ in $\V\CAT/\C$. Since the semantics functor is fully faithful (\ref{semantics_early}), an algebraically free $\V$-monad on $H$ is unique up to isomorphism if it exists, in which case we denote it by $\T_H$.\qed 
\end{defn}

\begin{cor}
\label{algfreemonadonendofunctorisfree}
Let $H$ be a $\V$-endofunctor on a $\V$-category $\C$ with conical binary coproducts, and suppose that the algebraically free $\V$-monad $\T_H$ on $H$ exists.  Then $\T_H$ is a free $\V$-monad on $H$.
\end{cor}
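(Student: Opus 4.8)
The plan is to show that $\T_H$ is in fact the algebraically free $\V$-monad on the \emph{pointed} $\V$-endofunctor $\mathsf{P}_H = (1_\C + H, \inj_1)$, and then to invoke \ref{algfreemonadisfreemonad} together with the universal property of $\mathsf{P}_H$ as the free pointed $\V$-endofunctor on $H$. First I would use that $\C$ has conical binary coproducts to apply Lemma \ref{algebrasforfreepointedendofunctor}, which gives an isomorphism $\mathsf{P}_H\Alg \cong H\Alg$ in $\V\CAT/\C$. Composing this with the defining isomorphism $H\Alg \cong \T_H\Alg$ in $\V\CAT/\C$ from \ref{algfreemonadonendofunctor}, we obtain $\T_H\Alg \cong \mathsf{P}_H\Alg$ in $\V\CAT/\C$; hence, by \ref{algebraicallyfreemonad2}, $\T_H$ is (canonically isomorphic to) the algebraically free $\V$-monad $\T_{\mathsf{P}_H}$ on the pointed $\V$-endofunctor $\mathsf{P}_H$.

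Next, \ref{algfreemonadisfreemonad} shows that $\T_{\mathsf{P}_H}$ is a free $\V$-monad on $\mathsf{P}_H$ with respect to $\W_* : \Mnd(\C) \to \End_*(\C)$, so there is an isomorphism $\Mnd(\C)(\T_H, \mathbb{U}) \cong \End_*(\C)(\mathsf{P}_H, \W_*(\mathbb{U}))$ natural in $\mathbb{U} \in \Mnd(\C)$. On the other hand, $\mathsf{P}_H$ is by construction the free pointed $\V$-endofunctor on $H$, which means precisely that there is an isomorphism $\End_*(\C)(\mathsf{P}_H, \mathsf{Q}) \cong \End(\C)(H, Q)$ natural in $\mathsf{Q} = (Q,q) \in \End_*(\C)$ (via the coproduct insertion $\inj_2 : H \to 1_\C + H$). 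Taking $\mathsf{Q} = \W_*(\mathbb{U})$, whose underlying $\V$-endofunctor is the underlying $\V$-endofunctor $G$ of the $\V$-monad $\mathbb{U}$, I would compose the two isomorphisms to obtain $\Mnd(\C)(\T_H, \mathbb{U}) \cong \End(\C)(H, G)$ natural in $\mathbb{U} \in \Mnd(\C)$, which is exactly the statement that $\T_H$ is a free $\V$-monad on $H$ with respect to the forgetful functor $\Mnd(\C) \to \End(\C)$ sending each $\V$-monad to its underlying $\V$-endofunctor.

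The argument is essentially a composition of universal properties, so there is no substantial obstacle; the only point needing care is the routine bookkeeping that the composite $\Mnd(\C) \xrightarrow{\W_*} \End_*(\C) \to \End(\C)$ agrees with the forgetful functor $\Mnd(\C) \to \End(\C)$ and that the displayed natural isomorphisms compose as natural transformations of functors on $\Mnd(\C)$ — both of which follow immediately from the constructions of $\W_*$ and $\mathsf{P}_{(-)}$ and the fact that the forgetful functor $\End_*(\C) \to \End(\C)$ merely discards the point.
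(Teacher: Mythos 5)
Your proof is correct and follows essentially the same route as the paper's: both establish $\T_H\Alg \cong H\Alg \cong \mathsf{P}_H\Alg$ via \ref{algebrasforfreepointedendofunctor}, conclude that $\T_H$ is algebraically free (hence free, by \ref{algfreemonadisfreemonad}) on $\mathsf{P}_H$, and then pass to freeness on $H$ via the universal property of the free pointed endofunctor. The only difference is that you spell out the final composition of adjunctions, which the paper leaves implicit.
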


\begin{proof}
Let $\mathsf{P}_H := \left(1_\C + H, \inj_1\right)$ be the free pointed $\V$-endofunctor on $H$.
By hypothesis and \ref{algebrasforfreepointedendofunctor} we have isomorphisms $\T_H\Alg \cong H\Alg \cong \mathsf{P}_H\Alg$ in $\V\CAT/\C$, so $\T_H$ is algebraically free on $\mathsf{P}_H$ and hence free on $\mathsf{P}_H$ by \ref{algfreemonadisfreemonad}, so $\T_H$ is free on $H$.
\end{proof}

\begin{theo}
\label{algfreemonadonendofunctorcor}
Let $H$ be a $\V$-endofunctor on a $\V$-category $\C$ with conical binary coproducts. Then the algebraically free $\V$-monad $\T_{H}$ on $H$ exists iff the forgetful $\V$-functor $U^H : H\Alg \to \C$ has a left adjoint $F^{H} : \C \to H\Alg$; and then $\T_{H}$ is the $\V$-monad arising from the adjunction $F^{H} \dashv U^{H}$. 
\end{theo}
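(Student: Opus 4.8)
The plan is to reduce the statement to the already-established Theorem~\ref{algfreeleftadjoint} for pointed $\V$-endofunctors, transporting along the free pointed $\V$-endofunctor on $H$. Since $\C$ has conical binary coproducts, $\mathsf{P}_H := (1_\C + H, \inj_1)$ is defined, and by Lemma~\ref{algebrasforfreepointedendofunctor} there is an isomorphism $K : H\Alg \to \mathsf{P}_H\Alg$ in $\V\CAT/\C$, so that $U^{\mathsf{P}_H} \circ K = U^H$. Consequently $U^H$ has a left adjoint if and only if $U^{\mathsf{P}_H}$ does, and in that case one passes between the left adjoints by composing with $K$ and $K^{-1}$.

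First I would establish the equivalence of the two conditions. If $\T_H$ exists, then $H\Alg \cong \T_H\Alg$ in $\V\CAT/\C$, whence $\mathsf{P}_H\Alg \cong \T_H\Alg$ in $\V\CAT/\C$ via $K^{-1}$; thus $\T_H$ is algebraically free on $\mathsf{P}_H$, so $\T_{\mathsf{P}_H}$ exists (and equals $\T_H$), and by Theorem~\ref{algfreeleftadjoint} the $\V$-functor $U^{\mathsf{P}_H}$---hence $U^H = U^{\mathsf{P}_H} \circ K$---has a left adjoint. Conversely, if $U^H$ has a left adjoint, then so does $U^{\mathsf{P}_H} = U^H \circ K^{-1}$, so by Theorem~\ref{algfreeleftadjoint} the algebraically free $\V$-monad $\T_{\mathsf{P}_H}$ on $\mathsf{P}_H$ exists; but $\mathsf{P}_H\Alg \cong H\Alg$ in $\V\CAT/\C$, so $\T_{\mathsf{P}_H}\Alg \cong H\Alg$ in $\V\CAT/\C$, i.e.\ $\T_{\mathsf{P}_H}$ is algebraically free on $H$, so $\T_H$ exists and $\T_H = \T_{\mathsf{P}_H}$.

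It then remains to identify $\T_H$ with the $\V$-monad arising from the adjunction $F^H \dashv U^H$. By Theorem~\ref{algfreeleftadjoint}, $\T_H = \T_{\mathsf{P}_H}$ is the $\V$-monad arising from $F^{\mathsf{P}_H} \dashv U^{\mathsf{P}_H}$. Since $K : H\Alg \to \mathsf{P}_H\Alg$ is an isomorphism in $\V\CAT/\C$, both $F^{\mathsf{P}_H}$ and $K \circ F^H$ are left adjoint to $U^{\mathsf{P}_H} = U^H \circ K^{-1}$, so $F^{\mathsf{P}_H} \cong K \circ F^H$; hence the underlying $\V$-endofunctor $U^{\mathsf{P}_H} F^{\mathsf{P}_H} \cong U^{\mathsf{P}_H} K F^H = U^H F^H$, and the unit and multiplication are carried across accordingly, so the $\V$-monad arising from $F^{\mathsf{P}_H} \dashv U^{\mathsf{P}_H}$ coincides with that arising from $F^H \dashv U^H$. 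The whole argument is a transport of structure along an isomorphism over $\C$, so no essential obstacle arises; the only point needing a little care is the verification that this transport leaves the induced $\V$-monad unchanged, which holds precisely because $K$ commutes with the forgetful $\V$-functors.
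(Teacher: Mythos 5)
Your proposal is correct and follows exactly the paper's route: the paper's proof of \ref{algfreemonadonendofunctorcor} is the one-line observation that $H\Alg \cong \mathsf{P}_H\Alg$ in $\V\CAT/\C$ by \ref{algebrasforfreepointedendofunctor}, so the result follows from \ref{algfreeleftadjoint}. You have merely spelled out the transport-of-structure details that the paper leaves implicit, and all of those details check out.
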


\begin{proof}
Since $H\Alg \cong \mathsf{P}_H\Alg$ in $\V\CAT\slash\C$ by \ref{algebrasforfreepointedendofunctor}, this follows from \ref{algfreeleftadjoint}.    
\end{proof}

\section{The free \texorpdfstring{$\J$}{J}-ary \texorpdfstring{$\V$}{V}-monad on a \texorpdfstring{$\J$}{J}-ary \texorpdfstring{$\V$}{V}-endofunctor}
\label{freemonadsection}

The objective of this section is to show that if $j : \J \hookrightarrow \C$ is a subcategory of arities satisfying certain hypotheses, then every $\J$-ary $\V$-endofunctor on $\C$ has an algebraically free $\J$-ary $\V$-monad on $\C$, so that in particular the forgetful functor $\W : \Mnd_{\underJ}(\C) \to \End_{\underJ}(\C)$ has a left adjoint. In the special case of the canonical subcategory of arities $j : \C_{\aleph_0} \hookrightarrow \C$ in a locally finitely presentable $\V$-category $\C$ over a locally finitely presentable $\V$, a result of the latter form was proved by Kelly and Power in \cite[\S 4]{KellyPower}, and an analogous result for strongly finitary $\V$-monads on a cartesian closed category $\V$ was proved by Kelly and Lack in \cite[\S 3]{KeLa}.

\subsection{Bounded subcategories of arities}
\label{boundedsection}

In this section, we identify hypotheses on $\J \hookrightarrow \C$ that will enable us to prove in \S \ref{existenceJarysection} that every $\J$-ary $\V$-endofunctor $H : \C \to \C$ has an algebraically free $\J$-ary $\V$-monad $\T_H$, by showing that the forgetful $\V$-functor $U^H : H\Alg \to \C$ has a left adjoint, then invoking \ref{algfreemonadonendofunctorcor} and proving that the induced $\V$-monad is $\J$-ary.  The hypotheses that we impose on $\J$ to enable this proof strategy involve enriched factorization systems.  A standard reference on factorization systems for ordinary categories is e.g.\;\cite{FreydKelly}, while we refer the reader to \cite{enrichedfact} for the definition of an \emph{enriched} factorization system on a $\V$-category. We recall that an enriched factorization system $(\E, \M)$ is \emph{proper} if the left class $\E$ is contained in the $\V$-epimorphisms and the right class $\M$ in the $\V$-monomorphisms (defined as in \cite[0.1]{Dubucbook}). If the $\V$-category $\C$ is tensored and cotensored, then the $\V$-epimorphisms ($\V$-monomorphisms) in $\C$ coincide with the epimorphisms (monomorphisms) in $\C_0$ by \cite[2.4]{enrichedfact}, so that an enriched factorization system $(\E, \M)$ on $\C$ is proper iff the underlying ordinary factorization system on $\C_0$ is proper.  

\begin{defn_sub}
\label{factegory}
A \textbf{factegory} is a category $\C$ equipped with a factorization system $(\E, \M)$, while a factegory $\C$ is \textbf{proper} if $(\E,\M)$ is proper.  The factegory $\C$ is \textbf{cocomplete} if the category $\C$ is cocomplete and has arbitrary cointersections of $\E$-morphisms (i.e., wide pushouts of arbitrary families of $\E$-morphisms with the same domain). A factegory $\C$ is \textbf{$\E$-cowellpowered} if each $C \in \ob\C$ has just a (small) set of $\E$-quotients, i.e. isomorphism classes of $\E$-morphisms with domain $C$.
\qed 
\end{defn_sub}

\noindent Note that if $\C$ is a cocomplete factegory, then $\E$ consists of epimorphisms by \cite[1.3]{Kellytrans}.  In particular, this is true when $\C$ is an $\E$-cowellpowered factegory with small colimits.  Every category $\C$ can be equipped with the structure of an $\E$-cowellpowered factegory by taking $(\E, \M) = (\Iso, \All)$, where $\Iso$ is the class of all isomorphisms in $\C$ and $\All$ is the class of all morphisms in $\C$.

The term \textit{factegory} was recently introduced by the authors in \cite{locbd} in the case where $(\E,\M)$ is proper, along with the following terminology:

\begin{defn_sub}
\label{closedfactegory}
A \textbf{(symmetric monoidal) closed factegory} is a a symmetric monoidal closed category $\V$ equipped with a $\V$-enriched factorization system $(\E, \M)$. \qed
\end{defn_sub}

\begin{assumption_sub}\label{blanket_assumption}
For the remainder of the paper, we assume (along with our blanket assumptions in \S \ref{background}) that $\V$ is a closed factegory with enriched factorization system $(\E, \M)$. \qed
\end{assumption_sub}

\begin{defn_sub}
\label{compatible}
Let $\C$ be a $\V$-category equipped with an enriched factorization system $(\E_\C, \M_\C)$. Then $(\E_\C, \M_\C)$ \textbf{is compatible with} $(\E, \M)$ if each $\C(C, -) : \C \to \V$ ($C \in \ob\C$) preserves the right class (i.e. $m \in \M_\C$ implies $\C(C, m) \in \M$). \qed
\end{defn_sub}

\noindent This notion of compatibility for enriched factorization systems was first introduced by the authors in \cite{locbd}, in the case where the factorization systems are proper, and it leads to the following concept, which also was introduced in \cite{locbd} in the proper case:

\begin{defn_sub}
\label{Vfactegory}
A $\V$\textbf{-factegory} is a $\V$-category $\C$ equipped with an enriched factorization system $(\E_\C, \M_\C)$ compatible with $(\E, \M)$, while we say that $\C$ is \textbf{proper} if $(\E_\C,\M_\C)$ is proper. The $\V$-factegory $\C$ is \textbf{cocomplete} if the $\V$-category $\C$ is cocomplete and $\C$ has arbitrary (conical) cointersections of $\E_\C$-morphisms. \qed
\end{defn_sub}

\noindent Where this will not cause confusion, we shall (also) write $(\E, \M)$ for the enriched factorization system of a $\V$-factegory $\C$. We can now formulate the important notion of \emph{boundedness}, which is based on Kelly's taxonomy of the \textit{preservation of $\E$-tightness} of various classes of cocones by ordinary functors in \cite[2.3]{Kellytrans}:

\begin{defn_sub}
\label{boundedVfunctor}
Let $\C$ and $\D$ be $\V$-factegories with small conical colimits, and let $F : \C \to \D$ be a $\V$-functor.  Given a regular cardinal $\alpha$, we say that $F$ is $\alpha$\textbf{-bounded} if for every small $\alpha$-filtered diagram $D : \A \to \C_0$ and every $\M$\emph{-cocone} $m = \left(m_A : DA \to C\right)_{A \in \A}$ on $D$ (i.e. $m_A \in \M$ for all $A \in \ob\A$), if the induced morphism $\overline{m} : \colim \ D \to C$ lies in $\E$, then the induced morphism $\overline{Fm} : \colim \ FD \to FC$ lies in $\E$. To say that $F$ is $\alpha$-bounded in this sense is to say that $F$ \emph{preserves the} $\E$\emph{-tightness of small} $\alpha$\emph{-filtered} $\M$\emph{-cocones}, following the terminology of \cite[2.3]{Kellytrans}.  We say that $F : \C \to \D$ is \textbf{bounded} if $F$ is $\alpha$-bounded for some regular cardinal $\alpha$.

Given an object $C$ of a $\V$-factegory $\C$ with small conical colimits, we say that $C$ is ($\alpha$-)\textbf{bounded} if the $\V$-functor $\C(C, -) : \C \to \V$ is ($\alpha$-)bounded. \qed  
\end{defn_sub}

\noindent Note that if each of $(\E, \M), (\E_\C, \M_\C), (\E_\D, \M_\D)$ is the trivial factorization system $(\Iso, \All)$, then a $\V$-functor $F : \C \to \D$ is $\alpha$-bounded iff $F$ preserves small conical $\alpha$-filtered colimits. So the notion of $\alpha$-boundedness can be regarded as a factorization-system-theoretic variant of the notion of having rank $\leq \alpha$.

We mention in passing that the slightly weaker notion of \textit{preservation of the $\E$-tightness of $(\M,\alpha)$-cocones} \cite[2.3(i)]{Kellytrans} is defined analogously to $\alpha$-boundedness but employs $\alpha$-chains rather than $\alpha$-filtered diagrams in general.  This weaker notion is used in various of the existence results in \cite{Kellytrans} and, consequently, still suffices in enabling the central existence results in this paper (namely \ref{mainalgfreetheorem}, \ref{freemonadalgebrasaresignaturealgebras}, \ref{colimitcategoryofalgebras}) when substituted in place of ($\alpha$-)boundedness.

\begin{rmk_sub}
\label{boundedvsunions}
We now discuss how the boundedness of certain $\V$-functors between \textit{proper} $\V$-factegories admits an intuitive equivalent characterization in terms of \textit{$\M$-unions}.  This special case forms part of the foundation of the theory of \textit{locally bounded $\V$-categories}, recently introduced by the authors in \cite{locbd}, generalizing the locally bounded categories of Freyd and Kelly (\cite{FreydKelly}, \cite[\S 6.1]{Kelly}).

Let $\C$ be a cocomplete \textit{proper} $\V$-factegory.  A small sink $(m_i : C_i \to C)_{i \in I}$ (i.e. a small family of morphisms with shared codomain) is $\E$\emph{-tight} (or is \textit{jointly in $\E$}) if the induced morphism $\coprod_i C_i \to C$ lies in $\E$.  A sink $(m_i)_i$ is an $\M$\emph{-sink} if each $m_i \in \M$ ($i \in I$), while a small $\M$-sink $(m_i)_i$ is $\alpha$\emph{-filtered} if for every $J \subseteq I$ with $|J| < \alpha$, there is some $i \in I$ such that every $m_j$ ($j \in J$) factors through $m_i$. 

Given a $\V$-functor $F : \C \to \D$ between cocomplete proper $\V$-factegories, we say that $F$ \emph{preserves the} $\E$\emph{-tightness of} $\alpha$\emph{-filtered} $\M$\emph{-sinks} if $F$ sends every $\E$-tight $\alpha$-filtered $\M$-sink to an $\E$-tight sink.  Since $\C$ and $\D$ are proper, we can show that $F$ is $\alpha$-bounded iff $F$ preserves the $\E$-tightness of $\alpha$-filtered $\M$-sinks. To see this, first observe that if $m = \left(m_A : DA \to C\right)_{A \in \A}$ is a cocone on a small diagram $D : \A \to \C_0$, then $\overline{m} : \colim \ D \to C$ lies in $\E$ iff the cocone $m$ is an $\E$-tight sink. Indeed, the induced morphism $\left[m_A\right]_A:\coprod_A DA \to C$ factors through the canonical morphism $\coprod_A DA \to \colim D$, which is a regular epimorphism (by the construction of conical colimits from coproducts and coequalizers) and hence is an $\E$-morphism by properness. By composition and cancellation properties of factorization systems, it then follows that $\left[m_A\right]_A:\coprod_A DA \to C$ lies in $\E$ iff $\overline{m}:\colim D \to C$ lies in $\E$, as desired. Given this observation, the stated equivalence for $F : \C \to \D$ is now almost immediate. 

In a cocomplete proper $\V$-factegory $\C$ one can also define the \emph{union} of an $\M$-sink $(m_i : C_i \to C)_{i \in I}$ to be an $\M$-morphism $m : \bigcup_i C_i \to C$ through which the given $\M$-sink $(m_i)_i$ factors via an $\E$-tight sink $\left(e_i : C_i \to \bigcup_i C_i\right)_i$ (i.e. $m \circ e_i = m_i$ for each $i \in I$). The union of $(m_i)_i$ can be obtained as the $\M$-component of the $(\E, \M)$-factorization $\coprod_{i \in I} C_i \xrightarrow{e} \bigcup_{i \in I} C_i \xrightarrow{m} C$ of $[m_i]_i:\coprod_{i \in I} C_i \to C$. 

Given cocomplete proper $\V$-factegories $\C$ and $\D$ and a $\V$-functor $F : \C \to \D$ that preserves the right class (i.e. $m \in \M$ implies $Fm \in \M$), we say that $F$ \emph{preserves} ($\alpha$\emph{-filtered}) $\M$\emph{-unions} if whenever $(m_i)_i$ is an ($\alpha$-filtered) $\M$-sink in $\C$ with union $m$, then the $\M$-morphism $Fm$ is a union of the $\M$-sink $(Fm_i)_i$. It is remarked in \cite[4.31]{locbd} that if $F$ preserves the right class, then $F$ preserves $\alpha$-filtered $\M$-unions iff $F$ preserves the $\E$-tightness of $\alpha$-filtered $\M$-sinks, iff (by the previous paragraph) $F$ is $\alpha$-bounded in the sense of \ref{boundedVfunctor}. In particular, since $\C(C, -) : \C \to \V$ ($C \in \ob\C$) preserves the right class by \ref{compatible}, it follows that $C$ is $\alpha$-bounded iff $\C(C, -) : \C \to \V$ preserves $\alpha$-filtered $\M$-unions. This fact will be used in \ref{locallybounded} below. \qed
\end{rmk_sub}

\noindent Our motivation for considering bounded $\V$-functors is to make use of the following important result proved by Kelly in \cite[18.1]{Kellytrans}:

\begin{theo_sub}[Kelly \cite{Kellytrans}]
\label{Kellytheorem}
If $H : \C \to \C$ is a bounded endofunctor on a cocomplete factegory $\C$, then $U^H : H\Alg \to \C$ has a left adjoint. \qed
\end{theo_sub}

\noindent We now enrich \ref{Kellytheorem} as follows:

\begin{theo_sub}
\label{enrichedKellytheorem}
If $H : \C \to \C$ is a bounded $\V$-endofunctor on a cocomplete $\V$-factegory $\C$ that is cotensored, then the $\V$-functor $U^H : H\Alg \to \C$ has a left adjoint. 
\end{theo_sub}

\begin{proof}
The hypotheses imply that $\C_0$ is a cocomplete factegory and that $H_0 : \C_0 \to \C_0$ is a bounded endofunctor. So from \ref{Kellytheorem} we deduce that $U^{H_0} : H_0\Alg \to \C_0$ has a left adjoint, i.e. that $U^H_0 : H\Alg_0 \to \C_0$ has a left adjoint. Since $\C$ is cotensored, it follows by \ref{endofunctorcreationcolimits} that $H\Alg$ is cotensored and that $U^H : H\Alg \to \C$ preserves cotensors, so that $U^H : H\Alg \to \C$ then has a left adjoint by \cite[4.85]{Kelly}. 
\end{proof}

\noindent Using the notion of enriched $\alpha$-bounded object (\ref{boundedVfunctor}), we now introduce a new property of subcategories of arities that is central to this paper:

\begin{defn_sub}\label{bdd_sub_ar}
Let $j : \J \hookrightarrow \C$ be a subcategory of arities in a $\V$-factegory $\C$ with small conical colimits. Then $\J$ is $\alpha$\textbf{-bounded} if $\J$ is small and every $J \in \ob\J$ is \mbox{$\alpha$-bounded}, while $\J$ is \textbf{bounded} if there is a regular cardinal $\alpha$ for which $\J$ is $\alpha$-bounded. \qed
\end{defn_sub}

\begin{rmk_sub}
\label{boundedrmk}
In the situation of \ref{bdd_sub_ar}, assuming that $\J$ is small, we find that $\J$ is bounded iff each $J \in \ob\J$ is bounded. Indeed, if for each $J \in \ob\J$ there is a regular cardinal $\alpha_J$ such that $J$ is $\alpha_J$-bounded, then (because $\J$ is small) we can find a regular cardinal $\alpha$ larger than every $\alpha_J$, so that every $J \in \ob\J$ will be $\alpha$-bounded. \qed
\end{rmk_sub}

\begin{egg_sub}
\label{boundedexamples}
We now show that most of the (eleutheric) subcategories of arities from \ref{eleuthericexamples} are bounded: 
\begin{enumerate}[leftmargin=0pt,labelindent=0pt,itemindent=*,label=(\arabic*)]

\item Suppose that $\V$ is locally $\alpha$-presentable as a closed category and that $\C$ is a locally $\alpha$-presentable $\V$-category with associated subcategory of arities $j : \C_\alpha \hookrightarrow \C$ consisting of $\alpha$-presentable objects. Regarding $\V$ as a closed factegory by taking $(\E, \M)$ to be $(\Iso, \All)$, we may regard $\C$ as a $\V$-factegory equipped also with $(\Iso, \All)$.  For every $J \in \ob\C_\alpha$, the $\V$-functor $\C(J, -) : \C \to \V$ preserves small conical $\alpha$-filtered colimits and hence is $\alpha$-bounded.  So the subcategory of arities $j : \C_\alpha \hookrightarrow \C$ is $\alpha$-bounded.

In fact, \emph{every} small subcategory of arities $j : \J \hookrightarrow \C$ in a locally presentable $\V$-category $\C$ (over a locally presentable closed category $\V$) is bounded by \ref{boundedrmk}, because for each $C \in \ob\C$, there is some regular cardinal $\alpha$ for which $C$ is $\alpha$-presentable by \cite[7.4]{Kellystr}.

\item More generally, we shall show in \ref{generallocallyboundedprop} below that if $\C$ is a \textit{locally bounded $\V$-category} \cite{locbd} over a \textit{locally bounded} closed category $\V$ \cite[\S 6.1]{Kelly}, then \emph{every} small subcategory of arities in $\C$ is bounded, with respect to the proper factorization system carried by $\C$.

\item If $\V$ is a $\pi$-category in the sense of \cite{BorceuxDay}, then the eleutheric subcategory of arities $j : \SF(\V) \hookrightarrow \V$ is $\aleph_0$-bounded. Indeed, by the definition of $\pi$-category (see \cite[2.1.1]{BorceuxDay}), we know for each $V \in \ob\V$ that the functor $V \times (-) : \V \to \V$ preserves small filtered colimits. It then follows by a slight variation of the proof of \cite[3.8]{Kellystr} that $(-)^n : \V \to \V$ preserves small filtered colimits for each $n \in \N$. So if we take $(\E, \M) = (\Iso, \All)$, then for each $n \in \N$ it follows that $\V(n \cdot I, -) \cong \V(I, -)^n \cong (-)^n : \V \to \V$ preserves small filtered colimits and thus is $\aleph_0$-bounded.  

\item If we consider the subcategory of arities $j : \{I\} \hookrightarrow \V$ on the unit object in $\V$, then we can take $(\E, \M) = (\Iso, \All)$ and $\alpha := \aleph_0$, because $\V(I, -) \cong 1_\V : \V \to \V$ will certainly be $\aleph_0$-bounded. 

\item Let $\A$ be a small $\V$-category, and consider the subcategory of arities $\y_\A : \A^\op \hookrightarrow [\A, \V]$. If we equip both $\V$ and $[\A, \V]$ with the trivial enriched factorization system $(\Iso, \All)$, then $\y_\A : \A^\op \to [\A, \V]$ will be an $\aleph_0$-bounded subcategory of arities. Indeed, if $A \in \ob\A$, then the $\V$-functor $[\A, \V](\y_\A A, -) : [\A, \V] \to \V$ will be $\aleph_0$-bounded, i.e. will preserve small conical filtered colimits, because this $\V$-functor is isomorphic (by the enriched Yoneda lemma) to the cocontinuous evaluation $\V$-functor $\mathsf{Ev}_A : [\A, \V] \to \V$.      
  
\item Let $\Phi$ be a locally small class of small weights satisfying Axiom A of \cite{LR}, and let $\C = \Phi\Mod(\scrT)$ be the $\V$-category of models of a $\Phi$-theory $\scrT$.  Then, by \ref{loc_bdd_phitheory_example} below, if $\V$ is a locally bounded closed category that is $\E$-cowellpowered, then $\y_\Phi : \scrT^\op \hookrightarrow \C$ is a bounded subcategory of arities, with respect to an associated proper factorization system on $\C$.

\item Again letting $\C = \Phi\Mod(\scrT)$ as in (6), but for an arbitrary $\V$ as in \S \ref{background}, if there is a regular cardinal $\alpha$ for which every small conical $\alpha$-filtered weight is $\Phi$-flat, then $\y_\Phi:\scrT^\op \hookrightarrow \C$ will be an $\alpha$-bounded subcategory of arities with respect to the $(\Iso, \All)$ factorization systems on $\V$ and on $\C$.  Indeed, since $\y_\Phi$ presents $\C$ as a free cocompletion of $\scrT^\op$ under small $\Phi$-flat colimits, it follows by \cite[4.2(iv)]{KS} that each $\C(\y_\Phi T,-):\C \rightarrow \V$ ($T \in \ob\scrT$) preserves small $\Phi$-flat colimits and hence small conical $\alpha$-filtered colimits, so $\y_\Phi T$ is $\alpha$-bounded. 

\item As a special case of (7), suppose that $\Phi_\bbD$ is the class of small weights determined by a sound doctrine $\bbD$ as in \ref{eleuthericexamples}, where $\V$ is locally $\bbD$-presentable as a $\tensor$-category. Since $\bbD$ is a small class of small categories, we can find a regular cardinal $\alpha$ such that every $\D \in \bbD$ is $\alpha$-small. Then because $\Set$ is locally $\alpha$-presentable, small $\alpha$-filtered colimits commute in $\Set$ with $\alpha$-small limits and hence with $\D$-limits for every $\D \in \bbD$, so that small $\alpha$-filtered colimits are $\bbD$-filtered. Since the weights for small conical $\bbD$-filtered colimits are $\Phi_\bbD$-flat (see \cite[\S 5.4]{LR}), it then follows from (7) that $\y_\Phi : \scrT^\op \hookrightarrow \Phi_{\bbD}\Mod(\scrT)$ is an $\alpha$-bounded subcategory of arities for the $(\Iso,\All)$ factorization systems. \qed   
\end{enumerate}
\end{egg_sub}

\begin{egg_sub}[\textbf{Locally bounded \texorpdfstring{$\V$}{V}-categories}]
\label{locallybounded}
We now discuss in detail the general class of examples \ref{boundedexamples}(2), in locally bounded $\V$-categories $\C$ over locally bounded closed categories $\V$, and we show that any small subcategory of arities in such a $\V$-category $\C$ is bounded, with the useful consequence that all of our main results in this paper will hold for any small and eleutheric subcategory of arities in such a $\V$-category.

We begin by recalling the relevant definitions from \cite{locbd}. Let $\V$ be a cocomplete proper closed factegory (e.g. $\V = \Set$). If $\C$ is a cocomplete proper $\V$-factegory and $\G \subseteq \ob\C$ is small, then $\G$ is an \emph{enriched} $(\E, \M)$\emph{-generator} if for each $C \in \ob\C$ the canonical morphism $\coprod_{G \in \G} \C(G, C) \tensor G \to C$ lies in $\E$, where $\C(G, C) \tensor G$ is the $\V$-enriched tensor. Recall from \ref{boundedvsunions} that (in this context) an object $C \in \ob\C$ is $\alpha$-bounded iff $\C(C, -) : \C \to \V$ preserves $\alpha$-filtered $\M$-unions. A $\V$-category $\C$ is then \emph{locally} $\alpha$\emph{-bounded} if $\C$ is a cocomplete proper $\V$-factegory with an enriched $(\E, \M)$-generator consisting of $\alpha$-bounded objects.  In this case, it follows that $\C$ is also complete, by \cite[4.27]{locbd}.

A symmetric monoidal closed category $\V$ is then \emph{locally} $\alpha$\emph{-bounded (as a closed category)} if $\V_0$ is a locally $\alpha$-bounded ($\Set$-)category, the associated proper factorization system $(\E, \M)$ is $\V$-enriched, the unit object $I$ is $\alpha$-bounded, and $G \tensor G'$ is $\alpha$-bounded for all $G, G' \in \G$ (the $(\E, \M)$-generator of $\V_0$).  By the above, it then follows that $\V_0$ is also complete.

We refer the reader to \cite[\S 5.3]{locbd} for many examples of locally bounded closed categories, which include any locally presentable closed category, any cartesian closed topological category over $\Set$, any cocomplete locally cartesian closed category with a generator and arbitrary cointersections of epimorphisms, and various categories of models of symmetric monoidal limit theories in locally bounded and $\E$-cowellpowered closed categories.  We now have the following result:
\end{egg_sub}
 
\begin{prop_sub}
\label{generallocallyboundedprop}
Let $\C$ be a locally bounded $\V$-category over a locally bounded closed category $\V$. If $j : \J \hookrightarrow \C$ is a small subcategory of arities, then $\J$ is bounded.
\end{prop_sub} 

\begin{proof}
If $J \in \ob\J$, then by \cite[6.4]{locbd} there is a regular cardinal $\alpha$ such that $J$ is $\alpha$-bounded, so we deduce the result by \ref{boundedrmk} and the smallness of $\J$. 
\end{proof}

\begin{egg_sub}\label{loc_bdd_phitheory_example}
In the situation of \ref{boundedexamples}(6), if $\V$ is a locally bounded closed category that is $\E$-cowellpowered, then $\Phi\Mod(\scrT)$ carries the structure of a locally bounded $\V$-category by \cite[11.9]{locbd}, so the small subcategory of arities $\y_\Phi:\scrT^\op \hookrightarrow \Phi\Mod(\scrT)$ is bounded, by \ref{generallocallyboundedprop}. \qed
\end{egg_sub}

\subsection{Existence of free \texorpdfstring{$\J$}{J}-ary \texorpdfstring{$\V$}{V}-monads on \texorpdfstring{$\J$}{J}-ary \texorpdfstring{$\V$}{V}-endofunctors}
\label{existenceJarysection}

In this section, we prove that every $\J$-ary $\V$-endofunctor $H$ has an algebraically free $\V$-monad $\T_H$ that is $\J$-ary, provided that the subcategory of arities $j : \J \hookrightarrow \C$ is bounded and that $\C$ is a cocomplete $\V$-factegory and is cotensored.  We begin with two lemmas:

\begin{lem_sub}
\label{preservesEtightnessallcocones}
Let $F : \C \to \D$ be a $\V$-functor between cocomplete $\V$-factegories. If $F$ preserves small conical colimits and the left class (i.e. $e \in \E$ implies $Fe \in \E$), then $F$ preserves the $\E$-tightness of all small cocones. Explicitly, if $D : \A \to \C_0$ is a small diagram and $m = \left(m_A : DA \to C\right)_{A \in \A}$ is a cocone on $D$ whose induced morphism $\overline{m} : \colim D \to C$ lies in $\E$, then the induced morphism $\overline{Fm} : \colim FD \to FC$ lies in $\E$.
\end{lem_sub}

\begin{proof}
Since $F$ preserves small conical colimits, it suffices to show that the morphism $F\overline{m} : F\left(\colim D\right) \to FC$ lies in $\E$, which is true because $\overline{m} \in \E$ and $F$ preserves the left class.  
\end{proof}

\noindent If $\B$ is a small $\V$-category, then since $\V$ is a closed factegory (\ref{blanket_assumption}) the presheaf $\V$-category $[\B^\op, \V]$ is a $\V$-factegory \cite[4.7]{locbd}, whose enriched factorization system is defined \emph{pointwise}.

\begin{lem_sub}
\label{compatibilityproperty}
Let $D : \B \to \C$ be a $\V$-functor from a small $\V$-category $\B$ to a cocomplete $\V$-factegory $\C$. Then the colimit $\V$-functor $(-) \ast D : \left[\B^\op, \V\right] \to \C$ is a left adjoint that preserves the left class.   
\end{lem_sub}

\begin{proof}
The right adjoint of $(-) \ast D$ is the $\V$-functor $\C(D-, ?) : \C \to [\B^\op, \V]$ defined by $C \mapsto \C(D-, C)$, and this right adjoint preserves the right class because $(\E_\C, \M_\C)$ is compatible with $(\E, \M)$ and the right class of $[\B^\op, \V]$ is defined pointwise. It follows that the left adjoint $(-) \ast D$ preserves the left class \cite[3.4]{locbd}.
\end{proof}

\noindent The following result will be central for proving the main theorems of this paper:

\begin{prop_sub}
\label{Jarybounded}
Let $j : \J \hookrightarrow \C$ be an $\alpha$-bounded subcategory of arities in a cocomplete $\V$-factegory $\C$. Then every $\V$-endofunctor $H : \C \to \C$ that is a left Kan extension along $j$ is $\alpha$-bounded. In particular, every $\J$-ary $\V$-endofunctor $H : \C \to \C$ is $\alpha$-bounded.   
\end{prop_sub}

\begin{proof}
The last assertion follows from the previous one because every $\J$-ary $\V$-endofunctor is a left Kan extension along $j$ by \ref{non_eleu_jary_is_lan}. So let $H = \Lan_j D : \C \to \C$ for some $D:\J \rightarrow \C$. Then $H$ is isomorphic to the composite $\V$-functor 
\begin{equation}\label{eq:jary_endo_as_composite}\C \xrightarrow{\y_j} \left[\J^\op, \V\right] \xrightarrow{(-) \ast D} \C\end{equation}
where $\y_j$ is the restricted Yoneda $\V$-functor defined by $\y_j(C) = \C(j-, C)$ $(C \in \C)$. Since $\J$ is $\alpha$-bounded, it follows that $\y_j$ is $\alpha$-bounded.  Also, $(-) \ast D:\left[\J^\op, \V\right] \to \C$ is a left adjoint that preserves the left class, by \ref{compatibilityproperty}, so $(-) \ast D$ preserves the $\E$-tightness of all small cocones, by \ref{preservesEtightnessallcocones}, and hence the composite \eqref{eq:jary_endo_as_composite} preserves the $\E$-tightness of small $\alpha$-filtered $\M$-cocones, i.e. is $\alpha$-bounded. 
\end{proof}

\noindent The preceding result will enable us to invoke \ref{enrichedKellytheorem} to deduce the existence of the algebraically free $\V$-monad $\T_H$ on a $\J$-ary endofunctor $H$ under certain hypotheses, while the following will enable us to deduce that $\T_H$ is $\J$-ary:

\begin{lem_sub}
\label{freealgebramonadisJarycor}
Let $j : \J \hookrightarrow \C$ be a subcategory of arities in a $\Phi_{\underJ}$-cocomplete $\V$-category $\C$, let $H : \C \to \C$ be $\J$-ary, and suppose that the forgetful $\V$-functor $U^H : H\Alg \to \C$ has a left adjoint $F^H$.  Then the induced $\V$-monad $\T$ on $\C$ is $\J$-ary. 
\end{lem_sub}

\begin{proof}
Since $H$ preserves $\Phi_{\underJ}$-colimits, we deduce by  \ref{endofunctorcreationcolimits} that $U^H$ creates $\Phi_{\underJ}$-colimits, so $U^H$ preserves $\Phi_{\underJ}$-colimits since $\C$ has $\Phi_{\underJ}$-colimits, and the result follows.
\end{proof}

We now prove our main result of this section:

\begin{theo_sub}
\label{mainalgfreetheorem}
Let $j : \J \hookrightarrow \C$ be a bounded subcategory of arities in a cocomplete $\V$-factegory $\C$ that is cotensored. Every $\J$-ary $\V$-endofunctor $H$ on $\C$ has an algebraically free $\V$-monad $\T_H$ that is $\J$-ary, so that $U^H : H\Alg \to \C$ is strictly $\J$-monadic.  
\end{theo_sub}

\begin{proof}
Since $H$ is $\J$-ary and hence bounded by \ref{Jarybounded}, we deduce from \ref{enrichedKellytheorem} that $U^H : H\Alg \to \C$ has a left adjoint, and the resulting free $H$-algebra $\V$-monad $\T_H$ is $\J$-ary by \ref{freealgebramonadisJarycor}. We also know by \ref{algfreemonadonendofunctorcor} that $\T_H$ is an algebraically free $\V$-monad on $H$, and the result follows.    
\end{proof}

\noindent From \ref{algfreemonadonendofunctorisfree}, \ref{mainalgfreetheorem}, and \cite[Fact 3.1]{Porstmonoids} we now deduce: 

\begin{cor_sub}
\label{Whasleftadjoint}
Let $j : \J \hookrightarrow \C$ be a bounded subcategory of arities in a cocomplete $\V$-factegory $\C$ that is cotensored. Then the forgetful functor $\W : \Mnd_{\underJ}(\C) \to \End_{\underJ}(\C)$ is monadic. \qed     
\end{cor_sub}

\section{Free \texorpdfstring{$\J$}{J}-ary \texorpdfstring{$\V$}{V}-monads on \texorpdfstring{$\J$}{J}-signatures}
\label{freemonadsonsignatures}

In this section, we first define the notion of \emph{$\Sigma$-algebra} for a $\J$\emph{-signature} $\Sigma$  on a subcategory of arities $j : \J \hookrightarrow \C$, and then we show under certain hypotheses that every $\J$-signature has a free $\J$-ary $\V$-monad $\T_\Sigma$ whose $\V$-category of Eilenberg-Moore algebras is isomorphic to the $\V$-category of $\Sigma$-algebras.

\begin{defn}
\label{signatures}
Let $j : \J \hookrightarrow \C$ be a subcategory of arities in a $\V$-category $\C$. A \mbox{\textbf{$\J$-signature}} \textbf{(in $\C$)} is an ordinary functor $\Sigma : \ob\J \to \C_0$, where $\ob\J$ is the discrete category on the objects of $\J$. Thus, a $\J$-signature in $\C$ is just an $\ob\J$-indexed family of objects of $\C$. The category of $\J$-signatures is then $\JSig(\C) := \mathsf{CAT}\left(\ob\J, \C_0\right)$. \qed  
\end{defn}

\noindent In the locally presentable setting, the notion of signature for a subcategory of arities appears in \cite[Definition 37]{BourkeGarner} and (in a special case) in \cite[\S 5]{KellyPower}.

\begin{defn}
\label{signaturealgebras}
Let $\Sigma$ be a $\J$-signature for a subcategory of arities $j : \J \hookrightarrow \C$ in a tensored $\V$-category $\C$. A $\Sigma$\textbf{-algebra} (in $\C$) is a pair $(A, a)$ with $A \in \ob\C$ and $a = \left(a_J : \C(J, A) \tensor \Sigma J \to A\right)_{J \in \J}$ an $\ob\J$-indexed family of morphisms in $\C$. Given $\Sigma$-algebras $(A,a)$ and $(B,b)$, a \textbf{$\Sigma$-homomorphism} $f : (A, a) \to (B, b)$ is a morphism $f : A \to B$ in $\C$ such that the following diagram commutes for every $J \in \ob\J$:
\[\begin{tikzcd}
	\C(J, A) \tensor \Sigma J && \C(J, B) \tensor \Sigma J \\
	A && B
	\arrow["a_J"', from=1-1, to=2-1]
	\arrow["{\C(J, f) \tensor \Sigma J}", from=1-1, to=1-3]
	\arrow["b_J", from=1-3, to=2-3]
	\arrow["f", from=2-1, to=2-3]
\end{tikzcd}\]
We let $\Sigma\Alg_0$ be the ordinary category of $\Sigma$-algebras and $\Sigma$-homomorphisms. \qed 
\end{defn}

\begin{rmk}
\label{signaturealgebraremark}
If $\C$ is cotensored, then one may also define $\Sigma$-algebras in $\C$ in the following way, which is equivalent to the above if $\C$ is also tensored. A $\Sigma$\emph{-algebra} is a pair $(A, a)$ with $A \in \ob\C$ and $a = \left(a_J : \Sigma J \to [\C(J, A), A]\right)_{J \in \J}$ an $\ob\J$-indexed family of morphisms in $\C$, while a morphism $f : (A, a) \to (B, b)$ of such $\Sigma$-algebras in $\C$ is a morphism $f : A \to B$ in $\C$ such that the following diagram commutes for every $J \in \ob\J$:
\[\begin{tikzcd}
	{\Sigma J} && {[\C(J, A), A]} \\
	\\
	{[\C(J, B), B]} && {[\C(J, A), B]}
	\arrow["{a_J}", from=1-1, to=1-3]
	\arrow["{b_J}"', from=1-1, to=3-1]
	\arrow["{\left[\C(J, f), 1\right]}"', from=3-1, to=3-3]
	\arrow["{\left[1, f\right]}", from=1-3, to=3-3]
\end{tikzcd}\]
This definition of $\Sigma$-algebra is perhaps closer to the traditional notion from universal algebra. In that context, where $\C = \V = \Set$ and $\J = \FinCard$, a $\Sigma$-algebra (under this second definition) consists of a set $A$ and for each finite cardinal $n$, a function $a_n : \Sigma n \to \Set\left(A^n, A\right)$ that interprets each operation symbol $\omega \in \Sigma n$ of arity $n$ as an operation $\omega^A:A^n \to A$. Under Definition \ref{signaturealgebras}, we instead have $a_n : A^n \times \Sigma n \to A$, which sends each pair $(\vec{x}, \omega)$ to the value $\omega^A(\vec{x})$ of the operation $\omega^A$ at the input $\vec{x}$.  
\qed 
\end{rmk}

\begin{para}\label{vcat_sig_algs}
If $\C$ is tensored and $\V$ has \textit{pairwise equalizers} of $\ob\J$-indexed families of parallel pairs in the sense of \cite[2.1]{commutants} (which is true if $\J$ is small or $\V$ has wide intersections of strong monomorphisms), then the category $\Sigma\Alg_0$ underlies a $\V$-category $\Sigma\Alg$ defined as follows. For all $\Sigma$-algebras $(A, a)$ and $(B, b)$ in $\C$, we define the hom-object $\Sigma\Alg\left((A, a), (B, b)\right) \in \ob\V$ to be the pairwise equalizer of the following $\ob\J$-indexed family of parallel pairs: 
\[ \C(A, B) \xrightarrow{\C(a_J, 1)} \C(\C(J, A) \tensor \Sigma J, B)\;, \]
\[ \C(A, B) \xrightarrow{(\C(J, -) \tensor \Sigma J)_{AB}} \C(\C(J, A) \tensor \Sigma J, \C(J, B) \tensor \Sigma J) \xrightarrow{\C(1, b_J)} \C(\C(J, A) \tensor \Sigma J, B)\;. \] 
Thus we obtain a $\V$-category $\Sigma\Alg$ that is equipped with an evident faithful $\V$-functor $U^\Sigma : \Sigma\Alg \to \C$, and we call $\Sigma\Alg$ the \textbf{$\V$-category of $\Sigma$-algebras} (in $\C$). \qed
\end{para}
 
\noindent We now wish to show (under certain hypotheses) that every $\J$-signature $\Sigma$ generates a free $\J$-ary $\V$-endofunctor $H_\Sigma : \C \to \C$. 

\begin{para}[\textbf{The free $\V$-endofunctor on a $\J$-signature}]
\label{free_jary}
Let $j : \J \hookrightarrow \C$ be a small subcategory of arities in a cocomplete $\V$-category $\C$.  We write simply $\ob\J$ for the free $\V$-category on the discrete category $\ob\J$, which is just the discrete $\V$-category with object set $\ob\J$.  Hence $\JSig(\C) = \mathsf{CAT}\left(\ob\J, \C_0\right)$ may be identified with $\V\CAT\left(\ob\J, \C\right)$.  Writing $j':\ob\J \rightarrow \C$ for the canonical $\V$-functor, we obtain a `forgetful' functor
$$\V\CAT(j',\C)\;:\;\End(\C) \longrightarrow \JSig(\C)$$
that is given by restriction along $j'$ and sends each $\V$-endofunctor $H:\C \rightarrow \C$ to its \textbf{underlying $\J$-signature $(HJ)_{J \in \ob\J}$}.  This forgetful functor has a left adjoint
$$\Lan_{j'}\;:\;\JSig(\C) \longrightarrow \End(\C)$$
that is given by left Kan extension along $j'$ and so sends each $\J$-signature $\Sigma$ to the \textbf{\mbox{polynomial} $\V$-endofunctor}
$$H_\Sigma := \Lan_{j'}\Sigma\;:\;\C \longrightarrow\C\;,$$
given by
$$H_\Sigma C = \coprod_{J \in \J} \C(J, C) \tensor \Sigma J$$
$\V$-naturally in $C \in \C$.
\qed
\end{para}

\begin{prop}\label{hsigma_lan_jary_bdd}
Let $\Sigma$ be a $\J$-signature for a small subcategory of arities $j:\J \hookrightarrow \C$ in a cocomplete $\V$-category $\C$.  Then (1) $H_\Sigma:\C \rightarrow \C$ is a left Kan extension along $j$; (2) if $\J$ is eleutheric then $H_\Sigma$ is $\J$-ary; while (3) if $\C$ is a cocomplete $\V$-factegory and $\J$ is bounded then $H_\Sigma$ is bounded.
\end{prop}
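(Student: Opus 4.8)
The plan is to analyze the polynomial $\V$-endofunctor $H_\Sigma = \Lan_{j'}\Sigma$ via the explicit coproduct formula $H_\Sigma C = \coprod_{J \in \J} \C(J,C) \tensor \Sigma J$, reducing each of the three claims to facts already established in the excerpt. For (1), the key observation is that $H_\Sigma$ factors (up to isomorphism) as the composite $\C \xrightarrow{\y_j} [\J^\op,\V] \xrightarrow{(-)\ast D} \C$ where $D := (j')^* \Sigma$ is the associated $\V$-functor $\J \to \C$ — indeed, since $\ob\J$ is the discrete $\V$-category on the objects of $\J$, left Kan extension along $j' = j \circ (\text{inclusion}\ \ob\J \hookrightarrow \J)$ computes in two stages, and the first stage $\Lan_{(\ob\J \hookrightarrow \J)}\Sigma$ applied pointwise yields precisely a $\V$-functor $D : \J \to \C$ with $DJ$ built from the $\Sigma J'$; more simply, one checks directly from the coproduct formula that $H_\Sigma C \cong \C(j-,C) \ast D$ for an appropriate $D : \J \to \C$, namely $D = \Lan_{\ob\J \hookrightarrow \J}\Sigma$, which exists because $\C$ is cocomplete and $\ob\J \hookrightarrow \J$ is a functor between small $\V$-categories. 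Then $H_\Sigma \cong \Lan_j D$ by the standard formula for a pointwise left Kan extension as a weighted colimit (\cite[\S 4]{Kelly}), establishing (1). Alternatively, and perhaps more cleanly, one invokes the transitivity of left Kan extensions: $H_\Sigma = \Lan_{j'}\Sigma = \Lan_j(\Lan_{\ob\J \hookrightarrow \J}\Sigma)$, so $H_\Sigma$ is a left Kan extension along $j$ by construction.

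For (2), once (1) is in hand, the claim that $H_\Sigma$ is $\J$-ary is immediate from \ref{charns_jary}: when $j : \J \hookrightarrow \C$ is small and eleutheric, condition (2) of that proposition (being a left Kan extension along $j$) is equivalent to condition (1) ($H_\Sigma$ being $\Phi_{\underJ}$-cocontinuous, i.e. $\J$-ary). So (2) follows directly from (1) and \ref{charns_jary}.

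For (3), assuming $\C$ is a cocomplete $\V$-factegory and $\J$ is bounded, pick a regular cardinal $\alpha$ such that $\J$ is $\alpha$-bounded (using \ref{bdd_sub_ar} and \ref{boundedrmk}). Then $H_\Sigma$ is a left Kan extension along the $\alpha$-bounded subcategory of arities $j$ by part (1), so \ref{Jarybounded} applies verbatim to give that $H_\Sigma$ is $\alpha$-bounded, hence bounded. I do not expect any genuine obstacle here: all three parts are quick consequences of results proved earlier in the excerpt, and the only point requiring a little care is the bookkeeping in part (1) — verifying that the two-stage left Kan extension $\Lan_j \circ \Lan_{\ob\J \hookrightarrow \J}$ really does reproduce the polynomial functor $H_\Sigma$, i.e. that $\Lan_{\ob\J \hookrightarrow \J}\Sigma$ exists (by cocompleteness of $\C$) and that its left Kan extension along $j$ agrees with $\Lan_{j'}\Sigma$ by transitivity of Kan extensions along the composite $\ob\J \hookrightarrow \J \xrightarrow{j} \C$. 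This is routine given the coproduct formula for $H_\Sigma$ in \ref{free_jary}, which already exhibits $H_\Sigma C$ as a colimit weighted by $\C(j-,C)$ of a diagram on $\J$.
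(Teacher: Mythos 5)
Your proof is correct and follows essentially the same route as the paper's: writing $j' = j \circ i$ for $i:\ob\J \to \J$ and using transitivity of left Kan extensions to get $H_\Sigma \cong \Lan_j(\Lan_i\Sigma)$, then citing \ref{charns_jary} for (2) and \ref{Jarybounded} for (3). The extra discussion of the coproduct formula in part (1) is harmless but unnecessary, since the two-stage Kan extension argument already does the job.
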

\begin{proof}
Writing $i:\ob\J \rightarrow \J$ for the canonical $\V$-functor, we have that $j' = j \circ i$, so $H_\Sigma \cong \Lan_j\Lan_i\Sigma$ and hence (1) holds, and (2) now follows by \ref{charns_jary}, while (3) follows by \ref{Jarybounded}.
\end{proof}

\noindent In the situation of \ref{hsigma_lan_jary_bdd}, if $\J$ is eleutheric then $H_\Sigma$ is therefore a free \textit{$\J$-ary} $\V$-endofunctor on $\Sigma$, with respect to the evident forgetful functor $\End_{\underJ}(\C) \to \JSig(\C)$.  The latter functor is in fact monadic in this case:

\begin{prop}
\label{Vismonadic}
Let $j : \J \hookrightarrow \C$ be a small and eleutheric subcategory of arities in a cocomplete $\V$-category $\C$. Then the forgetful functor $\End_{\underJ}(\C) \to \JSig(\C)$ is monadic.
\end{prop}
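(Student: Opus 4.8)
The plan is to verify the hypotheses of Beck's precise monadicity theorem for the forgetful functor $G : \End_{\underJ}(\C) \to \JSig(\C)$, which here is an ordinary functor. First, $G$ has a left adjoint: by \ref{hsigma_lan_jary_bdd}(2) the polynomial $\V$-endofunctor $H_\Sigma = \Lan_{j'}\Sigma$ is $\J$-ary for every $\J$-signature $\Sigma$ (this is the one place we use that $\J$ is eleutheric), so the adjunction $\Lan_{j'} \dashv \V\CAT(j',\C)$ of \ref{free_jary} restricts to exhibit $\Sigma \mapsto H_\Sigma$ as a left adjoint to $G$, as already noted after \ref{hsigma_lan_jary_bdd}. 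Next, $G$ is conservative: if $G\alpha$ is invertible for a morphism $\alpha : H \to H'$ of $\End_{\underJ}(\C)$, then each component of the restriction $\alpha j : Hj \to H'j$ is invertible, so $\alpha j$ is invertible in $\V\CAT(\J,\C)$; since restriction along $j$ is an equivalence $\End_{\underJ}(\C) \simeq \V\CAT(\J,\C)$ by \ref{eleuthericequivalence} and hence reflects isomorphisms, $\alpha$ is invertible.

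It then remains to show that $\End_{\underJ}(\C)$ has coequalizers of $G$-split pairs and that $G$ preserves them; in fact I would show the stronger statement that $\End_{\underJ}(\C)$ has all coequalizers, computed pointwise, and that $G$ preserves all of them. Since $\C$ is cocomplete, $\End(\C)$ has small colimits computed pointwise, so it suffices to see that the full subcategory $\End_{\underJ}(\C)$ is closed under coequalizers in $\End(\C)$. Given $\alpha,\beta : H \rightrightarrows H'$ in $\End_{\underJ}(\C)$ with pointwise coequalizer $q : H' \to Q$ in $\End(\C)$, one checks that $Q$ is again $\J$-ary: for every weight $W \in \Phi_{\underJ}$ and every $\V$-functor $D : \J \to \C$, using that $H$ and $H'$ preserve $W$-colimits and that $W \ast (-)$, being a colimit, commutes with coequalizers, we get
\[
Q(W \ast D) \;\cong\; \operatorname{coeq}\bigl(H(W\ast D) \rightrightarrows H'(W\ast D)\bigr) \;\cong\; \operatorname{coeq}\bigl(W\ast HD \rightrightarrows W\ast H'D\bigr) \;\cong\; W\ast QD
\]
$\V$-naturally, so $Q$ is $\Phi_{\underJ}$-cocontinuous. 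Thus $\End_{\underJ}(\C)$ has coequalizers, computed pointwise, and $G$ preserves them, since coequalizers in $\JSig(\C) = \mathsf{CAT}(\ob\J,\C_0)$ are likewise computed pointwise and $G$ is evaluation at the objects of $\J$. Beck's theorem then yields that $G$ is monadic.

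The one point requiring genuine care is the closure of $\End_{\underJ}(\C)$ under pointwise coequalizers --- that is, the displayed interchange of the weighted colimit $W \ast (-)$ with coequalizers in the cocomplete $\V$-category $\C$ --- while the verification of the remaining hypotheses of Beck's theorem (left adjoint, conservativity, preservation) is formal.
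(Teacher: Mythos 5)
Your proof is correct, and it reaches the same destination (Beck's monadicity theorem, with the left adjoint given by left Kan extension and conservativity coming from the identity-on-objects nature of the restriction) by a slightly different decomposition. The paper first transports the problem across the equivalence $\End_{\underJ}(\C) \simeq \V\CAT(\J,\C)$ of \ref{eleuthericequivalence} and then applies Beck to the restriction functor $i^* : \V\CAT(\J,\C) \to \V\CAT(\ob\J,\C)$ along the canonical identity-on-objects $i : \ob\J \to \J$; the payoff is that $\V\CAT(\J,\C)$ manifestly has pointwise coequalizers preserved by $i^*$, so no closure argument is needed. You instead verify Beck's conditions directly on $\End_{\underJ}(\C)$, which is why you must prove the one genuinely nontrivial point you correctly single out: that $\End_{\underJ}(\C)$ is closed under pointwise coequalizers in $\End(\C)$. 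Your interchange argument
$Q(W \ast D) \cong \operatorname{coeq}(H(W\ast D) \rightrightarrows H'(W\ast D)) \cong \operatorname{coeq}(W\ast HD \rightrightarrows W\ast H'D) \cong W \ast QD$
is the standard ``colimits commute with colimits'' computation and is sound (one should just note that it is the \emph{canonical} comparison $W \ast QD \to Q(W \ast D)$ that is thereby inverted, which the usual naturality bookkeeping gives). In fact the paper uses exactly this closure fact later, in the proof of \ref{Uismonadic}, so your route front-loads an observation the paper needs anyway, at the cost of a slightly longer proof here; the paper's route keeps this proposition essentially formal.
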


\begin{proof}
In view of the equivalence $\V\CAT\left(\J, \C\right) \simeq \End_{\underJ}(\C)$ (see \ref{eleuthericequivalence}), it suffices to show that the functor $i^* = \V\CAT(i,\C):\V\CAT\left(\J, \C\right) \to \V\CAT\left(\ob\J, \C\right)$ is monadic, where $i:\ob\J \rightarrow \J$ is the canonical $\V$-functor. But $i^*$ has a left adjoint $\Lan_{i}$ and is conservative since $i$ is identity-on-objects, while $\V\CAT\left(\J, \C\right)$ has pointwise coequalizers preserved by $i^*$, so $i^*$ is monadic by Beck's monadicity theorem. 
\end{proof}

\noindent The proof of the following result is a straightforward generalization and enrichment of \cite[5.15]{AdamekMilius}.

\begin{prop}
\label{signaturefreeendofunctoralgebras}
Let $j : \J \hookrightarrow \C$ be a small subcategory of arities in a cocomplete $\V$-category $\C$, and let $\Sigma$ be a $\J$-signature.  Then $H_\Sigma\Alg \cong \Sigma\Alg$ in $\V\CAT/\C$. \qed 
\end{prop}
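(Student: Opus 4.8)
The plan is to prove $H_\Sigma\Alg \cong \Sigma\Alg$ in $\V\CAT/\C$ by first establishing a bijective correspondence at the level of underlying ordinary categories via the universal property of the coproduct, and then checking that this correspondence is compatible with the enrichment.

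First I would unwind the definitions. An $H_\Sigma$-algebra is a pair $(A,a)$ with $A \in \ob\C$ and $a : H_\Sigma A = \coprod_{J \in \J}\C(J,A)\tensor\Sigma J \to A$ a morphism in $\C$. By the universal property of the (conical) coproduct, such an $a$ corresponds bijectively to the family $a_J := a \circ \iota_J : \C(J,A)\tensor\Sigma J \to A$ ($J \in \ob\J$), where $\iota_J$ denotes the $J$-th coproduct insertion; and this family is precisely the data of a $\Sigma$-algebra in the sense of \ref{signaturealgebras}. Likewise, given $H_\Sigma$-algebras $(A,a)$ and $(B,b)$, a morphism $f : A \to B$ in $\C$ satisfies $f \circ a = b \circ H_\Sigma f$ if and only if, upon composing with each insertion $\iota_J$ and using that $H_\Sigma f \circ \iota_J = \iota_J \circ (\C(J,f)\tensor\Sigma J)$, the equation $f \circ a_J = b_J \circ (\C(J,f)\tensor\Sigma J)$ holds for every $J \in \ob\J$, i.e. if and only if $f$ is a $\Sigma$-homomorphism. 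This produces an isomorphism of ordinary categories $H_\Sigma\Alg_0 \cong \Sigma\Alg_0$ commuting with the forgetful functors to $\C_0$.

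Next, to promote this to an isomorphism of $\V$-categories over $\C$, I would compare hom-objects. Recall (from \cite[II.1]{Dubucbook}) that $H_\Sigma\Alg((A,a),(B,b))$ is the equalizer in $\V$ of the parallel pair
$$\C(A,B) \xrightarrow{\C(a,1)} \C(H_\Sigma A, B), \qquad \C(A,B) \xrightarrow{(H_\Sigma)_{AB}} \C(H_\Sigma A, H_\Sigma B) \xrightarrow{\C(1,b)} \C(H_\Sigma A, B).$$
Since $H_\Sigma A = \coprod_{J}\C(J,A)\tensor\Sigma J$, the universal property of the coproduct gives a natural isomorphism $\C(H_\Sigma A, -) \cong \prod_{J}\C(\C(J,A)\tensor\Sigma J, -)$, under which the first leg $\C(a,1)$ corresponds componentwise to the family $(\C(a_J,1))_{J}$ (using $a \circ \iota_J = a_J$), and the second leg $\C(1,b)\circ(H_\Sigma)_{AB}$ corresponds componentwise to the family $(\C(1,b_J)\circ(\C(J,-)\tensor\Sigma J)_{AB})_{J}$ (using $H_\Sigma f \circ \iota_J = \iota_J \circ (\C(J,f)\tensor\Sigma J)$ and $b \circ \iota_J = b_J$). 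Thus the equalizer defining $H_\Sigma\Alg((A,a),(B,b))$ is identified with the pairwise equalizer of the $\ob\J$-indexed family of parallel pairs used to define $\Sigma\Alg((A,a),(B,b))$ in \ref{vcat_sig_algs}. One then checks that this identification is $\V$-natural in $(A,a)$ and $(B,b)$ and compatible with the composition and identity morphisms and with the forgetful $\V$-functors $U^{H_\Sigma}$ and $U^\Sigma$, so that it assembles into an isomorphism $H_\Sigma\Alg \cong \Sigma\Alg$ in $\V\CAT/\C$ extending the one on underlying categories.

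The only point requiring care — and the \emph{main (if modest) obstacle} — is the bookkeeping in the last step: verifying that the single equalizer built from $\C(H_\Sigma A, -)$ decomposes, under the coproduct isomorphism $\C(\coprod_J X_J, -) \cong \prod_J \C(X_J, -)$, into exactly the $\ob\J$-indexed pairwise equalizer of \ref{vcat_sig_algs}, with the two legs matching leg-for-leg. Once $\C(H_\Sigma A,-)$ and the hom-action $(H_\Sigma)_{AB}$ are written out in terms of the insertions $\iota_J$, this is a routine diagram chase; the relevant pairwise equalizers exist because $\J$ is small and $\V$ is complete (cf. \ref{vcat_sig_algs}). This is the straightforward generalization and enrichment of \cite[5.15]{AdamekMilius}.
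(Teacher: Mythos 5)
Your proposal is correct and is exactly the argument the paper has in mind: the paper gives no details, simply asserting that the proof is a straightforward generalization and enrichment of \cite[5.15]{AdamekMilius}, and what you have written is that argument spelled out (coproduct universal property identifying $H_\Sigma$-algebra structures with $\ob\J$-indexed families, and the decomposition of the Dubuc equalizer hom-object into the pairwise equalizer of \ref{vcat_sig_algs} via $\C(\coprod_J X_J,-) \cong \prod_J \C(X_J,-)$). No gaps; the hypotheses you invoke ($\J$ small, $\C$ cocomplete hence tensored, $\V$ complete) are all available.
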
 

\noindent  Throughout the rest of the paper, we write
$$\U : \Mnd_{\underJ}(\C) \longrightarrow \JSig(\C)$$
to denote the composite of the forgetful functors $\Mnd_{\underJ}(\C) \to \End_{\underJ}(\C) \to \JSig(\C)$. 

\begin{theo}
\label{freemonadalgebrasaresignaturealgebras}
Let $j : \J \hookrightarrow \C$ be a bounded and eleutheric subcategory of arities in a cocomplete $\V$-factegory $\C$ that is cotensored. Every $\J$-signature $\Sigma$ has a free $\J$-ary $\V$-monad $\T_\Sigma$ on $\C$ with $\T_\Sigma\Alg \cong \Sigma\Alg$ in $\V\CAT/\C$, so that the forgetful $\V$-functor $U^\Sigma : \Sigma\Alg \to \C$ is strictly $\J$-monadic. In particular, $\U : \Mnd_{\underJ}(\C) \to \JSig(\C)$ has a left adjoint.
\end{theo}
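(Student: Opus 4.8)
The plan is to realize $\T_\Sigma$ as the algebraically free $\V$-monad on the polynomial $\V$-endofunctor $H_\Sigma$ of \ref{free_jary}. First I would observe that, $\J$ being bounded, it is in particular small, so $H_\Sigma = \Lan_{j'}\Sigma : \C \to \C$ is defined; by \ref{hsigma_lan_jary_bdd} it is $\J$-ary (here we use that $\J$ is eleutheric) and bounded (here we use that $\J$ is bounded and $\C$ is a cocomplete $\V$-factegory). Applying \ref{mainalgfreetheorem} to the $\J$-ary $\V$-endofunctor $H_\Sigma$ on the cotensored cocomplete $\V$-factegory $\C$ then yields an algebraically free $\V$-monad $\T_{H_\Sigma}$ on $H_\Sigma$ that is $\J$-ary, with $U^{H_\Sigma} : H_\Sigma\Alg \to \C$ strictly $\J$-monadic. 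We set $\T_\Sigma := \T_{H_\Sigma}$.

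Next I would transport the $\V$-category of algebras. By the definition of algebraic freeness (\ref{algfreemonadonendofunctor}) we have $\T_\Sigma\Alg \cong H_\Sigma\Alg$ in $\V\CAT/\C$, while \ref{signaturefreeendofunctoralgebras} gives $H_\Sigma\Alg \cong \Sigma\Alg$ in $\V\CAT/\C$ (applicable since $\J$ is small and $\C$ is cocomplete, hence tensored, so that $\Sigma\Alg$ is indeed a $\V$-category by \ref{vcat_sig_algs}). Composing these two isomorphisms over $\C$ gives $\T_\Sigma\Alg \cong \Sigma\Alg$ in $\V\CAT/\C$. Since $\T_\Sigma$ is a $\J$-ary $\V$-monad, $U^{\T_\Sigma} : \T_\Sigma\Alg \to \C$ is strictly $\J$-monadic by definition (\ref{str_jmonadic}), and as $U^\Sigma$ is isomorphic to it in $\V\CAT/\C$, the forgetful $\V$-functor $U^\Sigma : \Sigma\Alg \to \C$ is strictly $\J$-monadic as well.

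It then remains to check that $\T_\Sigma$ enjoys the universal property of a free $\J$-ary $\V$-monad on $\Sigma$ with respect to $\U : \Mnd_{\underJ}(\C) \to \JSig(\C)$, and to conclude that $\U$ has a left adjoint. For this I would compose two adjunctions. By \ref{Vismonadic} the forgetful functor $\End_{\underJ}(\C) \to \JSig(\C)$ is monadic, and (as noted after \ref{hsigma_lan_jary_bdd}) its left adjoint sends $\Sigma$ to $H_\Sigma$. On the other hand, $\T_{H_\Sigma}$ is the algebraically free, hence free (by \ref{algfreemonadonendofunctorisfree}), $\V$-monad on $H_\Sigma$; since $\T_{H_\Sigma}$ and $H_\Sigma$ are both $\J$-ary and $\Mnd_{\underJ}(\C) \hookrightarrow \Mnd(\C)$, $\End_{\underJ}(\C) \hookrightarrow \End(\C)$ are full, $\T_{H_\Sigma}$ is a free $\J$-ary $\V$-monad on $H_\Sigma$ with respect to $\W : \Mnd_{\underJ}(\C) \to \End_{\underJ}(\C)$ (which is monadic by \ref{Whasleftadjoint}). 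Composing the two left adjoints produces a left adjoint to the composite $\U : \Mnd_{\underJ}(\C) \to \End_{\underJ}(\C) \to \JSig(\C)$ whose value at $\Sigma$ is $\T_\Sigma$, which simultaneously proves that $\T_\Sigma$ is free $\J$-ary on $\Sigma$ and that $\U$ has a left adjoint.

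I do not expect a serious obstacle here: the statement is essentially the assembly of \ref{hsigma_lan_jary_bdd}, \ref{mainalgfreetheorem}, \ref{signaturefreeendofunctoralgebras}, and the monadicity results \ref{Vismonadic} and \ref{Whasleftadjoint}. The one point requiring a little care is the bookkeeping in the slice $\V\CAT/\C$ — checking that each isomorphism of algebra $\V$-categories commutes with the forgetful $\V$-functors, so that strict $\J$-monadicity transports along it, and that the composite of the two left adjoints really is a left adjoint to $\U$ sending $\Sigma$ to $\T_\Sigma$ on the nose. Both are routine once one recalls that the semantics functor $\ALG$ is fully faithful (\ref{semantics_early}) and that free objects compose.
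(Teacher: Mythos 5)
Your proposal is correct and follows essentially the same route as the paper, whose proof is simply the one-line assembly of \ref{algfreemonadonendofunctorisfree}, \ref{mainalgfreetheorem}, \ref{Vismonadic}, and \ref{signaturefreeendofunctoralgebras}; your write-up just makes explicit the bookkeeping (the composition of the two left adjoints, the restriction of the freeness of $\T_{H_\Sigma}$ to the full subcategories of $\J$-ary objects, and the transport of strict $\J$-monadicity along the isomorphism in $\V\CAT/\C$) that the authors leave implicit.
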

\begin{proof}
This follows immediately from \ref{algfreemonadonendofunctorisfree}, \ref{mainalgfreetheorem}, \ref{Vismonadic}, and \ref{signaturefreeendofunctoralgebras}.
\end{proof}

\noindent A weaker version of \ref{freemonadalgebrasaresignaturealgebras} holds even if $\J$ is \emph{not} eleutheric:   

\begin{theo}
\label{freemonadvariant}
Let $j : \J \hookrightarrow \C$ be a bounded subcategory of arities in a cocomplete $\V$-factegory $\C$ that is cotensored. Every $\J$-signature $\Sigma$ has a free $\V$-monad $\T_\Sigma$ on $\C$ with $\T_\Sigma\Alg \cong \Sigma\Alg$ in $\V\CAT/\C$, so that the forgetful $\V$-functor $U^\Sigma : \Sigma\Alg \to \C$ is strictly monadic. In particular, the forgetful functor $\Mnd(\C) \to \JSig(\C)$ has a left adjoint. 
\end{theo}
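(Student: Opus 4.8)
The plan is to rerun the proof of \ref{freemonadalgebrasaresignaturealgebras}, dropping the two places where eleuthericity was invoked — namely the monadicity of $\End_{\underJ}(\C) \to \JSig(\C)$ from \ref{Vismonadic}, and the conclusion that the free $\V$-monad produced is $\J$-ary — and retaining only what survives. The facts still available are: (i) the polynomial $\V$-endofunctor $H_\Sigma = \Lan_{j'}\Sigma : \C \to \C$ is \emph{bounded}, by \ref{hsigma_lan_jary_bdd}(3), whose proof factors $H_\Sigma$ through $\ob\J \hookrightarrow \J$ and invokes \ref{Jarybounded}, using only that $\J$ is bounded and $\C$ is a cocomplete $\V$-factegory, never that $\J$ is eleutheric; and (ii) $H_\Sigma\Alg \cong \Sigma\Alg$ in $\V\CAT/\C$, by \ref{signaturefreeendofunctoralgebras}, which again carries no eleuthericity hypothesis. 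Observe also that since $\C$ is a cocomplete $\V$-category it is tensored and has conical binary coproducts, and that $\J$ is small (being bounded), so that $\Sigma\Alg$ and the statements just cited are well-posed.

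First I would apply \ref{enrichedKellytheorem} to the bounded $\V$-endofunctor $H_\Sigma$ on the cotensored cocomplete $\V$-factegory $\C$, obtaining a left adjoint $F^{H_\Sigma}$ to $U^{H_\Sigma} : H_\Sigma\Alg \to \C$. Then \ref{algfreemonadonendofunctorcor} gives that the algebraically free $\V$-monad $\T_\Sigma := \T_{H_\Sigma}$ on $H_\Sigma$ exists and is the $\V$-monad arising from the adjunction $F^{H_\Sigma} \dashv U^{H_\Sigma}$; in particular $\T_\Sigma\Alg \cong H_\Sigma\Alg$ in $\V\CAT/\C$, and composing with (ii) yields $\T_\Sigma\Alg \cong \Sigma\Alg$ in $\V\CAT/\C$. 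Since the Eilenberg--Moore $\V$-functor $U^{\T_\Sigma} : \T_\Sigma\Alg \to \C$ is strictly monadic and this isomorphism is over $\C$, the $\V$-functor $U^\Sigma : \Sigma\Alg \to \C$ is strictly monadic as well.

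It remains to identify $\T_\Sigma$ as the free $\V$-monad on the $\J$-signature $\Sigma$. By \ref{algfreemonadonendofunctorisfree}, $\T_\Sigma$ is a free $\V$-monad on $H_\Sigma$ with respect to the forgetful functor $\Mnd(\C) \to \End(\C)$, and by \ref{free_jary}, $H_\Sigma = \Lan_{j'}\Sigma$ is the value at $\Sigma$ of the left adjoint $\Lan_{j'} : \JSig(\C) \to \End(\C)$ to the ``underlying $\J$-signature'' functor $\End(\C) \to \JSig(\C)$. Composing these two adjunctions exhibits $\T_\Sigma$ as a free $\V$-monad on $\Sigma$ relative to the composite forgetful functor $\Mnd(\C) \to \End(\C) \to \JSig(\C)$, which therefore has a left adjoint $\Sigma \mapsto \T_\Sigma$. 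Concretely, for each $\V$-monad $\mathbb{U}$ on $\C$ one gets a natural bijection between $\Mnd(\C)(\T_\Sigma, \mathbb{U})$ and $\JSig(\C)\bigl(\Sigma, (\mathbb{U}J)_{J \in \ob\J}\bigr)$, where $(\mathbb{U}J)_{J \in \ob\J}$ is the underlying $\J$-signature of $\mathbb{U}$.

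The argument is essentially bookkeeping once the supporting results of \S\ref{algfreemonads} and \S\ref{freemonadsonsignatures} are in hand; the only point where the absence of eleuthericity must be checked to be harmless is the boundedness of $H_\Sigma$ in \ref{hsigma_lan_jary_bdd}(3), and since the remaining ingredients (\ref{enrichedKellytheorem}, \ref{algfreemonadonendofunctorcor}, \ref{algfreemonadonendofunctorisfree}, \ref{signaturefreeendofunctoralgebras}, \ref{free_jary}) are all stated with no eleuthericity assumption, no genuine obstacle remains.
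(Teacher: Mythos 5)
Your proposal is correct and follows essentially the same route as the paper's own proof: boundedness of $H_\Sigma$ via \ref{hsigma_lan_jary_bdd}, the left adjoint to $U^{H_\Sigma}$ via \ref{enrichedKellytheorem}, algebraic freeness and freeness via \ref{algfreemonadonendofunctorcor} and \ref{algfreemonadonendofunctorisfree}, and the identification $H_\Sigma\Alg \cong \Sigma\Alg$ via \ref{signaturefreeendofunctoralgebras}. You merely spell out more explicitly the composition of the adjunction $\Lan_{j'} \dashv \V\CAT(j',\C)$ with the free-monad adjunction, which the paper leaves implicit in the phrase ``$H_\Sigma$ is a free $\V$-endofunctor on $\Sigma$.''
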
 

\begin{proof}
By \ref{free_jary}, $H_\Sigma$ is a free $\V$-endofunctor on $\Sigma$.  Also, $H_\Sigma$ is bounded by \ref{hsigma_lan_jary_bdd}, so by \ref{enrichedKellytheorem} we deduce that $U^{H_\Sigma} : H_\Sigma\Alg \to \C$ has a left adjoint.  Hence, the induced $\V$-monad $\T_\Sigma$ on $\C$ is an algebraically free $\V$-monad on $H_\Sigma$ by \ref{algfreemonadonendofunctorcor} and so is a free $\V$-monad on $H_\Sigma$ by \ref{algfreemonadonendofunctorisfree}. The result now follows, using \ref{signaturefreeendofunctoralgebras}.
\end{proof}

\noindent Contrasting \ref{freemonadalgebrasaresignaturealgebras} and \ref{freemonadvariant}, the additional assumption of eleuthericity in \ref{freemonadalgebrasaresignaturealgebras} allows us to deduce that the free $\V$-endofunctor $H_\Sigma$ on a $\J$-signature $\Sigma$ is $\J$\emph{-ary}, which in turn allows us to deduce that the free $\V$-monad $\T_\Sigma$ is $\J$\emph{-ary}.  The latter property of $\T_\Sigma$ will be quite important in developing a theory of presentations and algebraic colimits for $\J$-ary monads in the remainder of the paper, particularly because it is only since $\T_\Sigma$ is $\J$-ary that we shall be able to deduce that $\T_\Sigma$ is bounded, which in turn is what enables us to form the quotients of $\T_\Sigma$ presented by systems of $\J$-ary equations over $\T_\Sigma$ (\ref{colimitcategoryofalgebras}, \ref{everypresentationpresentsJarymonad}).

\begin{egg}
\label{freemonadalgebrasexample}
Let us recall the familiar special case of Theorem \ref{freemonadalgebrasaresignaturealgebras} in the classical setting of universal algebra, with $\C = \V = \Set$ and $\J = \FinCard$.  Every ($\FinCard$-)signature $\Sigma$ has a free finitary endofunctor $H_\Sigma : \Set \to \Set$ and a free finitary monad $\T_\Sigma = \left(T_\Sigma, \eta^\Sigma, \mu^\Sigma\right)$ on $\Set$. Here, $H_\Sigma$ is the polynomial endofunctor induced by $\Sigma$, given by $H_\Sigma X = \coprod_n X^n \times \Sigma n$.  $H_\Sigma$-algebras may be identified with the traditional $\Sigma$-algebras of \ref{signaturealgebraremark} and, in turn, with Eilenberg-Moore algebras for the free $\Sigma$-algebra monad $\T_\Sigma$, which admits the following explicit description. For any set $X$, the set $T_\Sigma X$ is the set of \emph{terms} constructed from variables in $X$ and operation symbols in $\Sigma$. The function $\eta_X^\Sigma : X \to T_\Sigma X$ then sends each $x \in X$ to itself (\emph{qua} variable), while the function $\mu_X^\Sigma : T_\Sigma(T_\Sigma X) \to T_\Sigma X$ acts by substitution. \qed      
\end{egg}

\begin{rmk}
\label{freemonaddescription}
Under the assumptions of \ref{freemonadalgebrasaresignaturealgebras}, we now provide a more concrete description of the (algebraically) free $\J$-ary $\V$-monad on a $\J$-ary $\V$-endofunctor, and in particular of the free $\J$-ary $\V$-monad on a $\J$-signature. Recall that if $H : \C \to \C$ is $\J$-ary, then the (algebraically) free $\J$-ary $\V$-monad on $H$ is the free $H$-algebra $\V$-monad $\T_H$ on $\C$. In view of \ref{h_algebras}, the free $H$-algebra $T_H C$ on an object $C$ of $\C$ is equally the free $H_0$-algebra on $C$ for the underlying ordinary functor $H_0:\C_0 \rightarrow \C_0$, so we can obtain an explicit description of $T_H C$ in the case where $\C$ is $\E$-cowellpowered by consulting Kelly's description of free algebras for ordinary endofunctors in \cite[\S 20]{Kellytrans}.  Letting $\Ord$ denote the preordered class of ordinals, one defines a transfinite sequence $C_{(-)} : \Ord \to \C_0$ by setting $C_0 := C$, setting $C_{\beta + 1} := C + HC_\beta$ for each ordinal $\beta$, setting $C_\alpha := \colim_{\beta < \alpha} C_\beta$ for each limit ordinal $\alpha$, and defining $C_{(-)}$ suitably on inequalities $\alpha \leq \beta$.  The $\V$-functor $H : \C \to \C$ is $\J$-ary and hence bounded, by \ref{Jarybounded}, and therefore $H_0:\C_0 \rightarrow \C_0$ is bounded, so since $\C_0$ is $\E$-cowellpowered, it follows by \cite[15.6]{Kellytrans} that this sequence converges, i.e. there is an ordinal $\alpha$ with $C_\alpha \to C_{\alpha + 1}$ an isomorphism (see \cite[5.2]{Kellytrans}). By \cite[20.4]{Kellytrans}, it then follows that the underlying object of the free $H_0$-algebra on $C$ is $C_\alpha$, so that $T_HC = C_\alpha$. 

In particular, if $\Sigma$ is a $\J$-signature, so that $\Sigma$ has a free $\J$-ary $\V$-endofunctor $H_\Sigma$ on $\C$ and free $\J$-ary $\V$-monad $\T_\Sigma$ on $\C$, then $\T_\Sigma$ admits the following explicit description if $\C$ is $\E$-cowellpowered. With $H = H_\Sigma$ in the preceding paragraph and $C \in \ob\C$, the transfinite sequence $C_{(-)} : \Ord \to \C_0$ has the following form (recall the explicit description of $H_\Sigma$ from \ref{free_jary}): we have $C_0 := C$, we have $C_{\beta + 1} := C + H_\Sigma C_\beta = C + \coprod_{J \in \J} \C\left(J, C_\beta\right) \tensor \Sigma J$ for each ordinal $\beta$, and we have $C_\alpha := \colim_{\beta < \alpha} C_\beta$ for each limit ordinal $\alpha$. Since $H_\Sigma$ is $\J$-ary and hence bounded, this transfinite sequence converges at some ordinal $\alpha$, so that $T_\Sigma C = C_\alpha$. One may think of each $C_\beta$ as \emph{the} $\C$\emph{-object of $\Sigma$-terms of depth} $\leq \beta$ \emph{with variables from} $C$, and of $T_\Sigma C$ as \emph{the} $\C$\emph{-object of all $\Sigma$-terms with variables from} $C$. For example, in the classical situation of $j : \FinCard \hookrightarrow \Set$, where $T_\Sigma X$ is the set of all $\Sigma$-terms with variables from the set $X$, then an element of $X_{\beta + 1}$ for an ordinal $\beta$ is either a variable from $X$, or an element of $\Set\left(n, X_\beta\right) \times \Sigma n = X_\beta^n \times \Sigma n$ for some finite cardinal $n$, consisting of an $n$-ary operation symbol $\omega \in \Sigma n$ and an $n$-tuple of terms $t_1,...,t_n \in X_\beta$ of depth $\leq \beta$; the resulting term of depth $\leq \beta + 1$ is usual written as $\omega(t_1,...,t_n)$. \qed       
\end{rmk}

\subsection{Free strongly finitary \texorpdfstring{$\V$}{V}-monads on cartesian closed topological categories}\label{free_sf_vmnds_top_ccc}

We now pause to consider a class of examples in which $\V$ is a cartesian closed topological category over $\Set$, meaning that $\V$ is cartesian closed and the functor $V := \V_0(1, -) : \V_0 \to \Set$ is topological (see e.g.~\cite[21.1]{AHS}).  The functor $V$, which we write also as $|\text{$-$}|$, is then faithful \cite[21.3]{AHS}, so that morphisms in $\V_0$ may be regarded as certain functions between the underlying sets of objects of $\V_0$.  Also, $\V_0$ is complete and cocomplete, with limits and colimits formed as in $\Set$ (e.g. by \cite[21.16]{AHS}), so that $|\text{$-$}|$ strictly preserves limits and colimits.  

The subcategory of arities $\SF(\V) \hookrightarrow \V$ is eleutheric (\ref{eleuthericexamples}) and is bounded when $\V$ is equipped with its $(\Iso,\All)$ factorization system (\ref{boundedexamples}).  Strongly finitary $\V$-monads are the $\SF(\V)$-ary $\V$-monads for this subcategory of arities, and in this subsection we prove that every free $\SF(\V)$-ary $\V$-monad $\T_\Sigma$ on an $\SF(\V)$-signature $\Sigma$ is a \textit{lifting} of a free finitary monad $\T_{|\Sigma|}$ on $\Set$.  Here we write $|\Sigma|$ to denote the \textbf{underlying $\FinCard$-signature} of $\Sigma$, defined by $|\Sigma| (n) := |\Sigma n|$ for each $n \in \N$.  With this notation, a $\Sigma$-algebra $(A,a)$ is equivalently given by an object $A$ of $\V$ and a $|\Sigma|$-algebra structure on $|A|$ whose associated maps $A^n \times \Sigma(n) \rightarrow A$ are $\V$-morphisms.  In particular, each $\Sigma$-algebra $(A, a)$ has an \textbf{underlying $|\Sigma|$-algebra} $\left(|A|, |a|\right)$.  We obtain a commutative square
\begin{equation}\label{eq:tlsq}
\begin{tikzcd}
	\Sigma\Alg_0 & \V_0 \\
	{|\Sigma|\Alg} & \Set
	\arrow["U^\Sigma_0", from=1-1, to=1-2]
	\arrow["V^\Sigma"', from=1-1, to=2-1]
	\arrow["V\:=\:|\text{$-$}|", from=1-2, to=2-2]
	\arrow["U^{|\Sigma|}"', from=2-1, to=2-2]
\end{tikzcd}
\end{equation}
in which the functor $V^\Sigma$ sends each $\Sigma$-algebra to its underlying $|\Sigma|$-algebra.  We now show that this square satisfies the hypotheses of \textit{Wyler's taut lift theorem} (see \cite[21.28]{AHS}). 

\begin{prop_sub}\label{wtop}
Given an $\SF(\V)$-signature $\Sigma$ in a cartesian closed topological category $\V$ over $\Set$, the functor $V^\Sigma$ in \eqref{eq:tlsq} is topological, and $U^\Sigma_0$ sends $V^\Sigma$-initial sources to $V$-initial sources.
\end{prop_sub}
\begin{proof}
We shall omit from our notation all applications of the forgetful functors in \eqref{eq:tlsq}.  Given a $|\Sigma|$-algebra $A$ together with $\Sigma$-algebras $B_\lambda$ and $|\Sigma|$-homomorphisms $f_\lambda:A \rightarrow B_\lambda$ $(\lambda \in \Lambda)$, we can equip $A$ with the structure of a $\Sigma$-algebra such that the maps $f_\lambda$ constitute a $V^\Sigma$-initial source, as follows.  Since $V$ is topological, we may equip the set $A$ with the structure of an object of $\V$ such that the maps $f_\lambda$ constitute a $V$-initial source in $\V_0$.  Using the $V$-initiality of the $f_\lambda$ and the fact that the $f_\lambda$ are $|\Sigma|$-homomorphisms, we can readily show that the maps $a_n:A^n \times |\Sigma|(n) \rightarrow A$ carried by the $|\Sigma|$-algebra $A$ are morphisms $a_n:A^n \times \Sigma(n) \rightarrow A$ in $\V_0$.  Thus we may regard $A$ as a $\Sigma$-algebra.  Since the $|\Sigma|$-homomorphisms $f_\lambda$ are also morphisms in $\V_0$, it follows that these maps $f_\lambda$ are also morphisms in $\Sigma\Alg$.  The $V$-initiality of $(f_\lambda:A \rightarrow B_\lambda)_{\lambda \in \Lambda}$ now entails also its $V^\Sigma$-initiality.
\end{proof}

\begin{para_sub}\label{para_lifting}
Adapting Beck's \cite{Beck} terminology to the setting of a cartesian closed topological category $\V$ over $\Set$ (cf. also \cite[\S 5]{ADV}), we say that an (ordinary) monad $\T = (T,\eta,\mu)$ on $\V_0$ is a \textit{strict lifting} of a monad $\mathbb{S} = (S,e,m)$ on $\Set$ if $VT = SV$, $V\eta = eV$, and $V\mu = mV$.  It then follows that $S = VTD$, $e = V\eta D$, and $m = V \mu D$, where $D$ is the left adjoint section of $V$ \cite[21.12]{AHS}.  $\T$ is a \textit{non-strict lifting} of $\mathbb{S}$ if there is an isomorphism $\varphi:VT \xrightarrow{\sim} SV$ such that $\varphi \cdot V\eta = eV$ and $mV \cdot S\varphi \cdot \varphi T = \varphi \cdot V\mu$.  We say that a $\V$-monad $\T$ on $\V$ is a \textit{strict} (resp. \textit{non-strict}) \textit{lifting} of a monad $\mathbb{S}$ on $\Set$ if its underlying ordinary monad $\T_0$ is a strict (resp. non-strict) lifting of $\mathbb{S}$.\qed
\end{para_sub}

\begin{prop_sub}
\label{topologicalliftingprop}
Let $\V$ be a cartesian closed topological category over $\Set$, and let $\Sigma$ be an $\SF(\V)$-signature. The free $\SF(\V)$-ary $\V$-monad $\T_\Sigma$ is a non-strict lifting of the free finitary monad $\T_{|\Sigma|}$ on $\Set$.  Moreover, we may construct $\T_\Sigma$ in such a way that it is a strict lifting of $\T_{|\Sigma|}$.
\end{prop_sub} 
\begin{proof}  
By \ref{wtop}, the square \eqref{eq:tlsq} satisfies the hypothesis of Wyler's taut lift theorem \cite[21.28]{AHS}, which therefore entails the result. 
\end{proof} 

\noindent In the situation of \ref{topologicalliftingprop}, if $X$ is an object of $\V$ then the underlying set of
$T_\Sigma X$ is therefore the set $T_{|\Sigma|}|X|$ of terms over $|\Sigma|$ with variables in the set $|X|$ underlying $X$. 

\section{Monadicity of \texorpdfstring{$\J$}{J}-ary \texorpdfstring{$\V$}{V}-monads over \texorpdfstring{$\J$}{J}-signatures} 
\label{monadicity}

Let $j:\J \hookrightarrow \C$ be a subcategory of arities satisfying the hypotheses of \ref{freemonadalgebrasaresignaturealgebras}, where we have shown that the forgetful functor $\U : \Mnd_{\underJ}(\C) \to \JSig(\C)$ has a left adjoint. We now show that $\U$ is \emph{monadic}. In the special case where $\C$ is a locally $\alpha$-presentable $\V$-category over a locally $\alpha$-presentable closed category $\V$ and $\J = \C_\alpha$, this was achieved by Lack in \cite{Lackmonadicity}. 

We first define a functor $\di : \End_{\underJ}(\C) \times \Sig_{\underJ}(\C) \to \Sig_{\underJ}(\C)$ by $\di(H, \Sigma) := H\Sigma$. So $\di$ is a strict action (see \cite[1.4b]{BJK}) of the strict monoidal category $\End_{\underJ}(\C)$ on $\JSig(\C)$, and the forgetful functor $\mathcal{X}:\End_{\underJ}(\C) \to \JSig(\C)$ strictly preserves the actions, in the sense that $\di\left(H, \mathcal{X}\left(H'\right)\right) = \mathcal{X}\left(H \circ H'\right)$ naturally in $H, H' \in \End_{\underJ}(\C)$. We now recall \cite[Theorem 2]{Lackmonadicity}:

\begin{theo}[Lack \cite{Lackmonadicity}]
\label{Lacktheorem}
Let $\mathscr{E}$ be a monoidal category such that $(-) \tensor H : \mathscr{E} \to \mathscr{E}$ preserves coequalizers for each $H \in \ob\mathscr{E}$. Let $\mathscr{B}$ be a category with a functor $\di : \mathscr{E} \times \mathscr{B} \to \mathscr{B}$, and let $\mathcal{X} : \mathscr{E} \to \mathscr{B}$ be a monadic functor with isomorphisms $\di(H, \mathcal{X}(H')) \cong \mathcal{X}(H \tensor H')$ natural in $H,H' \in \mathscr{E}$. If the forgetful functor $\W : \mathsf{Mon}(\mathscr{E}) \to \mathscr{E}$ has a left adjoint, then the composite $ \mathsf{Mon}(\mathscr{E}) \xrightarrow{\W} \mathscr{E} \xrightarrow{\mathcal{X}} \mathscr{B}$ is monadic. \qed
\end{theo}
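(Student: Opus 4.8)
The plan is to apply Beck's monadicity theorem, in the form using split pairs, to the composite $\mathcal{X}\W : \Mon(\mathscr{E}) \to \mathscr{B}$: one must check that $\mathcal{X}\W$ has a left adjoint, that it is conservative, and that $\Mon(\mathscr{E})$ has — and $\mathcal{X}\W$ preserves — coequalizers of $\mathcal{X}\W$-split pairs.

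The first two points are routine. Since $\mathcal{X}$ is monadic it has a left adjoint, and $\W$ has one by hypothesis, so $\mathcal{X}\W$ does. For conservativity, $\mathcal{X}$ is conservative because it is monadic, while $\W$ is conservative because a morphism of monoids whose underlying morphism in $\mathscr{E}$ is invertible has an inverse that is automatically a morphism of monoids; hence $\mathcal{X}\W$ is conservative.

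The heart of the matter is the coequalizer condition, which I would phrase as: $\W$ creates coequalizers of $\mathcal{X}\W$-split pairs, so that $\mathcal{X}\W$ creates, and hence preserves, them. Fix such a pair $f, g : M \rightrightarrows N$ in $\Mon(\mathscr{E})$, so that $\mathcal{X}\W f$ and $\mathcal{X}\W g$ admit a split — hence absolute — coequalizer in $\mathscr{B}$. Then $\W f, \W g$ is an $\mathcal{X}$-split pair in $\mathscr{E}$, so by monadicity of $\mathcal{X}$ it has a coequalizer $q : \W N \to Q$ in $\mathscr{E}$ which $\mathcal{X}$ preserves, with $\mathcal{X}q$ the given split coequalizer. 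It then remains to lift $q$ to $\Mon(\mathscr{E})$, i.e. to transport $\eta_N$ and $\mu_N$ along $q$ and verify that the resulting monoid on $Q$ gives the coequalizer of $f, g$. This transport succeeds provided that tensoring $q$ by an arbitrary object on either side again yields a coequalizer. On the right this is immediate from the hypothesis that $(-) \tensor H$ preserves coequalizers. On the left it is supplied by the $\di$-compatibility: since $\mathcal{X}q$ is an absolute coequalizer, $\di(H, \mathcal{X}q)$ is again a split coequalizer in $\mathscr{B}$ for each $H \in \ob\mathscr{E}$, and the natural isomorphism $\di(H, \mathcal{X}(-)) \cong \mathcal{X}(H \tensor -)$ then exhibits $H \tensor \W f, H \tensor \W g$ as an $\mathcal{X}$-split pair whose created coequalizer, by uniqueness, must be $H \tensor q$. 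With $q \tensor H$ and $H \tensor q$ both known to be coequalizers for every $H$, the usual diagram chases for lifting a monoid along a coequalizer go through — one sets $\eta_Q := q \cdot \eta_N$, obtains the unique $\mu_Q$ with $\mu_Q \cdot (q \tensor q) = q \cdot \mu_N$, and checks the unit and associativity laws by precomposing with suitable tensor products of $q$ with identities — and $q$ then underlies the coequalizer of $f, g$ in $\Mon(\mathscr{E})$; moreover $\mathcal{X}\W$ sends it to the split coequalizer $\mathcal{X}q$, so it is preserved. Beck's theorem then yields the claim.

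The step I expect to be the main obstacle is precisely the asymmetry of the hypothesis: only right-tensoring is assumed to preserve coequalizers, whereas lifting the monoid structure also needs that left-tensoring by $q$ be a coequalizer. The role of the action $\di$ and its compatibility with $\mathcal{X}$, exploited through the absoluteness of split coequalizers, is exactly to recover this otherwise-missing information for the particular coequalizer $q$ in play; making that interplay precise — rather than the routine monoid-diagram verifications — is where the real work lies.
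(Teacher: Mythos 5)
The paper does not actually prove this theorem: it is quoted from Lack's paper with a \verb|\qed|, accompanied only by the remark that right-closedness of $\mathscr{E}$ can be weakened to the stated hypothesis that each $(-)\tensor H$ preserves coequalizers. So your attempt is a reconstruction of Lack's argument, and its architecture is the right one: Beck's theorem applied to $\mathcal{X}\W$, the composite left adjoint, conservativity of both factors, and — most importantly — the use of the action $\di$ together with the absoluteness of split coequalizers to show that $H\tensor q$ is a coequalizer even though left-tensoring is not assumed to preserve anything. That last point is exactly where the hypothesis on $\di$ earns its keep, and you have identified and executed it correctly (your ``by uniqueness'' is better phrased as: $\mathcal{X}$ reflects coequalizers of $\mathcal{X}$-split pairs and $\mathcal{X}(H\tensor q)\cong\di(H,\mathcal{X}q)$ is split).

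The genuine gap is in the step you wave through as ``the usual diagram chases go through.'' For an \emph{arbitrary} $\mathcal{X}\W$-split pair $f,g:M\rightrightarrows N$, the morphism $\mu_Q$ with $\mu_Q\circ(q\tensor q)=q\circ\mu_N$ need not exist. Knowing that $q\tensor H$ and $H\tensor q$ are coequalizers for every $H$ only exhibits $q\tensor q=(Q\tensor q)\circ(q\tensor N)$ as an iterated coequalizer of the $2\times 2$ grid of tensored pairs; to factor $q\mu_N$ through it you must first check $q\mu_N(f\tensor 1_N)=q\mu_N(g\tensor 1_N)$ and then that the induced map coequalizes $1_Q\tensor f,\,1_Q\tensor g$. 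Neither identity follows from $f,g$ being monoid morphisms together with $qf=qg$: multiplicativity of $f$ controls $\mu_N(f\tensor f)$, not $\mu_N(f\tensor 1_N)$. (In $\Mon(\Set)$, for the two maps from the free monoid on one generator to the free monoid on $\{a,b\}$ sending the generator to $a$ and to $b$, the coequalizer of the underlying functions carries no compatible multiplication, precisely because $wa\sim wb$ fails; that pair is not split, but it shows the ``usual chase'' has no purchase without further input.) Your splitting lives in $\mathscr{B}$, so it cannot be tensored inside $\mathscr{E}$ to make $q\tensor q$ a split coequalizer of the diagonal pair $f\tensor f,\,g\tensor g$, which is the other standard way out.

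The missing idea is \emph{reflexivity}. In the sufficiency direction of Beck's theorem one only ever needs coequalizers of the canonical pairs $\varepsilon_{FB},F\beta:FUFB\rightrightarrows FB$, and these are reflexive with common section $F\eta_B$ as well as $U$-split; so it suffices to treat reflexive $\mathcal{X}\W$-split pairs. For a reflexive pair with common section $d$ (so $fd=gd=1_N$) one has $f\tensor 1_N=(f\tensor f)(1_M\tensor d)$ and $1_N\tensor f=(f\tensor f)(d\tensor 1_M)$, whence
\[ q\mu_N(f\tensor 1_N)=q\mu_N(f\tensor f)(1\tensor d)=qf\mu_M(1\tensor d)=qg\mu_M(1\tensor d)=q\mu_N(g\tensor 1_N), \]
and similarly on the other side; with these two identities in hand, your left- and right-tensor preservation results make the chase legitimate, and the rest of your argument (unit, associativity, universality in $\Mon(\mathscr{E})$, preservation by $\mathcal{X}\W$) is routine. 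This reflexive reduction is how Lack's Lemma 1 is actually set up, and without it the proof as written does not close.
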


\noindent In \cite[Theorem 2]{Lackmonadicity}, the stronger assumption of $\mathscr{E}$ being a \emph{right-closed} monoidal category is made; however, upon inspection of the proof of \cite[Theorem 2]{Lackmonadicity}, it is clear that the only use of this assumption occurs in the proof of \cite[Lemma 1]{Lackmonadicity}, in which one just uses the fact that each functor $(-) \tensor X : \mathscr{E} \to \mathscr{E}$ ($X \in \ob\mathscr{E}$) preserves coequalizers. 

Taking $\mathscr{E}$ to be the strict monoidal category $\End_{\underJ}(\C)$ and taking $\mathscr{B}$ to be the category $\JSig(\C)$ of $\J$-signatures, we now want to use \ref{Lacktheorem} to show the monadicity of 
\[ \U = \left(\Mnd_{\underJ}(\C) = \mathsf{Mon}\left(\End_{\underJ}(\C)\right) \xrightarrow{\W} \End_{\underJ}(\C) \xrightarrow{\mathcal{X}} \JSig(\C)\right). \]

\begin{theo}
\label{Uismonadic}
Let $j : \J \hookrightarrow \C$ be a bounded and eleutheric subcategory of arities in a cocomplete $\V$-factegory $\C$ that is cotensored. The forgetful functor $\U : \Mnd_{\underJ}(\C) \to \JSig(\C)$ is monadic.  
\end{theo}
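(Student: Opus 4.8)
The plan is to invoke Lack's Theorem~\ref{Lacktheorem} with $\mathscr{E} = \End_{\underJ}(\C)$, regarded as a strict monoidal category under composition, with $\mathscr{B} = \JSig(\C)$, with $\di : \End_{\underJ}(\C) \times \JSig(\C) \to \JSig(\C)$ as defined above, and with $\mathcal{X} : \End_{\underJ}(\C) \to \JSig(\C)$ the forgetful functor to underlying $\J$-signatures. The compatibility isomorphisms required by \ref{Lacktheorem} hold on the nose, since $\di(H,\mathcal{X}(H')) = \mathcal{X}(H\circ H')$ naturally in $H,H' \in \End_{\underJ}(\C)$, as observed above. Furthermore $\mathcal{X}$ is monadic by \ref{Vismonadic}, and the forgetful functor $\W : \Mnd_{\underJ}(\C) = \Mon(\End_{\underJ}(\C)) \to \End_{\underJ}(\C)$ has a left adjoint by \ref{Whasleftadjoint}. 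So the only hypothesis of \ref{Lacktheorem} still to be verified is that $(-)\circ H : \End_{\underJ}(\C) \to \End_{\underJ}(\C)$ preserves coequalizers for every $\J$-ary $\V$-endofunctor $H$; granting this, Theorem~\ref{Lacktheorem} will yield that $\U = \mathcal{X}\circ\W$ is monadic.

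To establish that last point, I would first show that coequalizers in $\End_{\underJ}(\C)$ are computed \emph{pointwise}, i.e.\ agree with the corresponding coequalizers in $\End(\C)$ formed at each object of $\C$. Indeed, by \ref{eleuthericequivalence} restriction along $j$ is an equivalence $\End_{\underJ}(\C) \simeq \V\CAT(\J,\C)$, and the latter has coequalizers formed pointwise in $\C$ since $\C$ is cocomplete; translating back, the coequalizer of a pair $F \rightrightarrows G$ in $\End_{\underJ}(\C)$ is $\Lan_j(Q_0)$, where $Q_0 : \J \to \C$ is the pointwise coequalizer of $Fj \rightrightarrows Gj$. Now for each $C \in \ob\C$ the weighted-colimit functor $\C(j-,C) * (-) : \V\CAT(\J,\C) \to \C$ is cocontinuous, so it sends the coequalizer $Fj \rightrightarrows Gj \to Q_0$ to a coequalizer in $\C$; since $F$, $G$, and $\Lan_j(Q_0)$ are left Kan extensions along $j$ (using \ref{non_eleu_jary_is_lan} and \ref{charns_jary}), we have $FC \cong \C(j-,C)*Fj$ and similarly for $G$ and $\Lan_j(Q_0)$, so $FC \rightrightarrows GC \to \Lan_j(Q_0)(C)$ is a coequalizer in $\C$. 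This yields the pointwise description.

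The desired preservation property then follows at once: if $F \rightrightarrows G \to Q$ is a coequalizer in $\End_{\underJ}(\C)$ and $H$ is $\J$-ary, then applying the pointwise property at the object $HC$ shows that $(F\circ H)(C) \rightrightarrows (G\circ H)(C) \to (Q\circ H)(C)$ is a coequalizer in $\C$ for every $C$, so $F\circ H \rightrightarrows G\circ H \to Q\circ H$ is a pointwise coequalizer in $\End(\C)$; since $Q\circ H$ is again $\J$-ary (being a composite of $\Phi_{\underJ}$-cocontinuous $\V$-endofunctors) and $\End_{\underJ}(\C)$ is full in $\End(\C)$, it is a coequalizer in $\End_{\underJ}(\C)$. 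I expect the main obstacle to be precisely this pointwise description of coequalizers in $\End_{\underJ}(\C)$: that is where eleuthericity (together with the cocompleteness of $\C$) is essential, since without it $\End_{\underJ}(\C)$ is merely reflective in $\End(\C)$ and its colimits may fail to be computed pointwise.
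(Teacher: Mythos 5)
Your proposal is correct and follows essentially the same route as the paper: both reduce the theorem to Lack's Theorem \ref{Lacktheorem} applied to $\mathcal{X}\circ\W$, citing \ref{Vismonadic} and \ref{Whasleftadjoint}, with the only real work being that $(-)\circ H$ preserves coequalizers in $\End_{\underJ}(\C)$. The paper dispatches that last point by observing directly that, since colimits commute with colimits, $\End_{\underJ}(\C)$ is closed under pointwise colimits in $\End(\C)$; your detour through the equivalence $\End_{\underJ}(\C)\simeq\V\CAT(\J,\C)$ and left Kan extension reaches the same pointwise description by an equivalent (if slightly longer) argument.
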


\begin{proof}
Since colimits commute with colimits, $\End_{\underJ}(\C)$ is closed under pointwise colimits in $\End(\C)$, so for each $H \in \End_{\underJ}(\C)$ the functor $(-) \circ H : \End_{\underJ}(\C) \to \End_{\underJ}(\C)$ preserves small colimits.  Also, $\mathcal{X} : \End_{\underJ}(\C) \to \JSig(\C)$ is monadic by \ref{Vismonadic}, and $\W : \Mnd_{\underJ}(\C) \to \End_{\underJ}(\C)$ has a left adjoint by \ref{Whasleftadjoint}. Since the required natural isomorphisms for $\di$ and $\mathcal{X}$ hold (as equalities) as discussed above, we now deduce the result from \ref{Lacktheorem}.   
\end{proof}

\section{Algebraic colimits of \texorpdfstring{$\J$}{J}-ary \texorpdfstring{$\V$}{V}-monads}
\label{algebraiccolimits}

In this section we study algebraic colimits of ($\J$-ary) $\V$-monads. We first develop some supporting material on weighted limits and colimits in limit $\V$-categories.  

\subsection{Weighted limits and colimits in limit \texorpdfstring{$\V$}{V}-categories}
\label{limitVcategories}

We first recall the description of small limits in $\V\CAT$. So let $\K$ be a small category and $\A : \K \to \V\CAT$ a functor, so that for each $k \in \ob\K$ we have a $\V$-category $\A_k$ and for each morphism $f : k \to k'$ in $\K$ we have a $\V$-functor $\A_f : \A_k \to \A_{k'}$. Since $\V$ is complete, we obtain a limit $\V$-category $\limit \A$ with the following explicit description. The objects of $\limit\A$ are $\ob\K$-indexed families $A = \left(A_k\right)_{k \in \K}$ with $A_k \in \ob\A_k$ for each $k \in \ob\K$, satisfying the compatibility condition $\A_f\left(A_k\right) = A_{k'}$ for each morphism $f : k \to k'$ in $\K$. Given two such families $A, B \in \ob\left(\limit\A\right)$, the hom-object $\limit\A(A, B) \in \ob\V$ is the limit of the functor $\A_{(-)}(A_{(-)}, B_{(-)}) : \K \to \V$ that sends $k \in \ob\K$ to $\A_k\left(A_k, B_k\right)$ and sends $f : k \to k'$ to the structural morphism $({\A_f})_{A_kB_k}:\A_k\left(A_k, B_k\right) \to \A_{k'}\left(A_{k'}, B_{k'}\right)$. We then have projection $\V$-functors $P_k : \limit \A \to \A_k$ for each $k \in \ob\K$ with $\A_f \circ P_k = P_{k'}$ for each $f : k \to k'$ in $\K$. If $D : \B \to \limit\A$ is a $\V$-functor, then we write $D_k := P_kD : \B \to \A_k$ for each $k \in \ob\K$.

\begin{para_sub}
\label{pointwiselimitcylinders}
Let $W:\B \to \V$ be a weight.  Regarding $\V\CAT$ as a $\Set'$-category for some category of (large) sets $\Set'$, there is a functor $\mathsf{Cyl}(W,-):\V\CAT \rightarrow \Set'$ that sends each $\V$-category $\C$ to the set $\mathsf{Cyl}(W,\C)$ of all triples $(D,C,\gamma)$ in which $D:\B \rightarrow \C$ is a $\V$-functor and $(C,\gamma)$ is a cylinder on the weighted diagram $(W,D)$.  Writing $\II$ for the unit $\V$-category, with $\ob\II = \{*\}$, we may regard $W$ as a $\V$-functor $W:\II^\op \otimes \B \rightarrow \V$ and so as a $\V$-profunctor.  The \textit{collage} of $W$ is the $\V$-category $\mathsf{Coll}_W$ with objects $\ob\II + \ob\B$, with homs $\mathsf{Coll}_W(X,Y)$ defined as $\II(X,Y)$ if $X,Y \in \ob\II$, as $\B(X,Y)$ if $X,Y \in \ob\B$, as $W(X,Y)$ if $X \in \ob\II$ and $Y \in \ob\B$, and as $0$ if $X \in \ob\B$ and $Y \in \ob\II$, with the evident composition and identities.  There is an evident $\V$-functor $B:\B \rightarrow \mathsf{Coll}_W$ with the property that $\mathsf{Coll}_W(*,B-) = W:\B \rightarrow \V$, so that $(*,1_W)$ is a cylinder on the weighted diagram $(W,B)$ in $\mathsf{Coll}_W$ and hence $(B,*,1_W) \in \mathsf{Cyl}(W,\mathsf{Coll}_W)$.

The functor $\mathsf{Cyl}(W,-)$ is representable, as $\mathsf{Cyl}(W,-) \cong \V\CAT(\mathsf{Coll}_W,-)$ with unit $(B,*,1_W)$.  Hence $\mathsf{Cyl}(W,-)$ preserves limits, so if $\A : \K \to \V\CAT$ is any small diagram, then $\mathsf{Cyl}(W,\limit \A) \cong \limit_k\mathsf{Cyl}(W,\A_k)$.  Hence, given any $\V$-functor $D:\B \rightarrow \limit\A$, a $(W,D)$-cylinder $(C,\gamma)$ is equivalently given by a family of $(W,D_k)$-cylinders $(C_k,\gamma_k) = (P_k C,P_k\gamma)$ $(k \in \ob\K)$ that is \textit{compatible} in the sense that $\left(\A_f C_k, \A_f \gamma_k\right) = (C_{k'}, \gamma_{k'})$ for each $f : k \to k'$ in $\K$. \qed
\end{para_sub}

\begin{prop_sub}
\label{jointlyreflectlimits}
Let $\A : \K \to \V\CAT$ be a small diagram. The projection $\V$-functors $P_k : \limit \A \to \A_k$ ($k \in \ob\K$) jointly reflect limits. Explicitly, if $(W : \B \to \V, D : \B \to \limit \A)$ is a weighted diagram in $\limit \A$ and $(L, \lambda)$ is a cylinder for $(W, D)$ such that for each $k \in \ob\K$, the associated cylinder $\left(L_k, \lambda_k\right)$ for $\left(W, D_k\right)$ is a limit cylinder, then $(L, \lambda)$ is itself a limit cylinder. 
\end{prop_sub}

\begin{proof}
Although $\B$ is not assumed small, we may form the $\V'$-category $[\B, \V]$, with the notation of \cite[\S 3.11, 3.12]{Kelly}. Now fix $X \in \ob\left(\limit\A\right)$ and consider the following diagram in $\V'$ for each $k \in \ob\K$: 
\[\begin{tikzcd}
	{\mathsf{lim}\A(X, L)} &&& {[\mathscr{B}, \mathscr{V}](W, \mathsf{lim}\A(X, D-))} \\
	\\
	{\A_k(X_k, L_k)} &&& {[\mathscr{B}, \mathscr{V}]\left(W, \A_k\left(X_k, D_k-\right)\right)}
	\arrow[from=1-1, to=1-4]
	\arrow["{\left(P_k\right)_{XL}}"', from=1-1, to=3-1]
	\arrow["{[\mathscr{B}, \mathscr{V}]\left(1, P_k\right)}", from=1-4, to=3-4]
	\arrow["\sim"', from=3-1, to=3-4]
\end{tikzcd}\]
The lower morphism is induced by $\lambda_k$ as in \cite[3.3]{Kelly} and is an isomorphism since $\left(L_k, P_k\lambda\right)$ is a limit cylinder, while the upper morphism is defined similarly. The morphisms $\left(P_k\right)_{XL}$ ($k \in \ob\K$) constitute a limit cone, as do the morphisms $[\B, \V]\left(1, P_k\right)$ ($k \in \ob\K$). Since the diagram commutes for all $k$ (because $P_k\lambda = \lambda_k$), it then follows that the upper morphism is an isomorphism, as required.
\end{proof}

\noindent Now \ref{pointwiselimitcylinders} and \ref{jointlyreflectlimits} immediately entail the following result:

\begin{cor_sub}
\label{pointwiselimits}
Let $\A : \K \to \V\CAT$ be a small diagram, and let $(W,D)$ be a weighted diagram in $\limit\A$.  Then a limit cylinder $(L,\lambda)$ for $(W,D)$ is equivalently given by a family of limit cylinders $(L_k,\lambda_k)$ for the $(W,D_k)$ $(k \in \ob\K)$ that is compatible in the sense of \ref{pointwiselimitcylinders}. \qed
\end{cor_sub}

\noindent We now wish to consider \emph{colimits} in a limit $\V$-category. First, we have the automorphism of (mere) categories $(-)^\op : \V\CAT \xrightarrow{\sim} \V\CAT$ that (therefore) preserves limits, so that for a given small diagram $\A : \K \to \V\CAT$ we have $\left(\limit \A\right)^\op = \left(\limit_k \A_k\right)^\op = \limit_k \A_k^\op$.  Hence, a dually weighted diagram $(W,D)$ in $\limit \A$ is equivalently given by a weighted diagram $(W, D^\op)$ in $\limit_k \A_k^\op$.  Therefore, by \ref{pointwiselimitcylinders}, a cylinder $(C,\gamma)$ on the dually weighted diagram $(W,D)$ in $\limit \A$ is equivalently given by a compatible family of cylinders $(C_k,\gamma_k)$ $(k \in \ob\K)$ on the weighted diagrams $(W,D^\op_k)$ in $\A_k^\op$, which is equally a compatible family of cylinders on the dually weighted diagrams $(W,D_k)$ in $\A_k$.  Thus, \ref{pointwiselimits} entails the following dual result, for colimits:

\begin{cor_sub}
\label{pointwisecolimits}
Let $\A : \K \to \V\CAT$ be a small diagram, and let $(W,D)$ be a dually weighted diagram in $\limit\A$.  Then a colimit cylinder $(L,\lambda)$ for $(W,D)$ is equivalently given by a compatible family of colimit cylinders $(L_k,\lambda_k)$ for the $(W,D_k)$ $(k \in \ob\K)$. \qed   
\end{cor_sub}

\noindent We now wish to consider small limits in the slice category $\V\CAT/\C$ for a $\V$-category $\C$. So fix a small category $\K$, and let $\K^\top$ be the category obtained from $\K$ by adjoining a terminal object $\top$. Then limits of shape $\K$ in $\V\CAT/\C$ can be defined in terms of limits of shape $\K^\top$ in $\V\CAT$, which we briefly review as follows. Let $\A : \K \to \V\CAT/\C$ be a functor, so that for each $k \in \ob\K$ we have a $\V$-functor $U_k : \A_k \to \C$, and for each morphism $f : k \to k'$ in $\K$ we have a $\V$-functor $\A_f : \A_k \to \A_{k'}$ with $U_{k'} \circ \A_f = U_k$. We then obtain an induced functor $\A^\top : \K^\top \to \V\CAT$ as follows: $\A^\top(k) := \A_k \in \V\CAT$ for each $k \in \ob\K$ and $\A^\top(\top) := \C \in \V\CAT$, $\A^\top(f) := \A_f : \A_k \to \A_{k'}$ for each morphism $f : k \to k'$ in $\K$, and $\A^\top(!_k) := U_k : \A_k \to \C$ for each $k \in \ob\K$, where $!_k : k \to \top$ is the unique arrow from $k$ to the terminal object $\top$ in $\K^\top$. We then have the limit $\V$-category $\limit\A^\top$, which we write simply as $\limit\A$, with projection $\V$-functors $P_k : \limit\A \to \A_k$ for each $k \in \ob\K$ and $U : \limit\A \to \C$, with $U_k \circ P_k = U : \limit \A \to \C$ for each $k \in \ob\K$ and $\A_f \circ P_k = P_{k'} : \limit \A \to \A_{k'}$ for each $f : k \to k'$ in $\K$. It is then a standard result about limits in slice categories that $U : \limit\A \to \C$ with the limit projections $P_k : \limit\A \to \A_k$ ($k \in \ob\K$) is the limit of $\A : \K \to \V\CAT/\C$. From \ref{pointwiselimits} we now deduce the following:

\begin{theo_sub}
\label{limitfunctorcreateslimits}
Let $\C$ be a $\V$-category, and let $\A : \K \to \V\CAT/\C$ be a small diagram with limit $U : \limit\A \to \C$. Let $\Lambda$ be a class of weighted diagrams with the property that $(W,D_k) \in \Lambda$ for every weighted diagram $(W,D) \in \Lambda$ in $\limit\A$ and every $k \in \ob\K$. Suppose that $U_k : \A_k \to \C$ creates $\Lambda$-limits for each $k \in \ob\K$. Then $U : \limit \A \to \C$ creates $\Lambda$-limits.
\end{theo_sub} 

\begin{proof}
Let $(W : \B \to \V, D : \B \to \limit \A)$ be a weighted diagram in $\Lambda$, and let $(L, \lambda)$ be a limit cylinder for $(W, UD : \B \to \C)$. We must show that there is a unique cylinder $\left(\Lbar, \lambdabar\right)$ for $(W, D)$ with $\left(U\Lbar, U\lambdabar\right) = (L, \lambda)$, and moreover that $\left(\Lbar, \lambdabar\right)$ is a limit cylinder for $(W, D)$. For each $k \in \ob\K$, since $U = U_kP_k$, the limit cylinder $(L, \lambda)$ is a limit cylinder for $\left(W, U_kD_k\right)$. Hence for each $k \in \ob\K$, since $U_k : \A_k \to \C$ creates $\Lambda$-limits and $\left(W, D_k\right) \in \Lambda$, there is a unique cylinder $(\Lbar_k, \lambdabar_k)$ for $\left(W, D_k\right)$ with $\left(U_k\Lbar_k, U_k\lambdabar_k\right) = (L, \lambda)$, and moreover $(\Lbar_k, \lambdabar_k)$ is a limit cylinder for $\left(W, D_k\right)$. It is now essentially immediate that the limit cylinders $(L, \lambda), (\Lbar_k, \lambdabar_k)$ ($k \in \ob\K$) form a compatible family in the sense of \ref{pointwiselimitcylinders} (for the induced functor $\A^\top : \K^\top \to \V\CAT$). So by \ref{pointwiselimitcylinders} and \ref{pointwiselimits}, we obtain a unique cylinder $\left(\Lbar, \lambdabar\right)$ for $(W, D)$ with the properties $\left(P_k\Lbar, P_k\lambdabar\right) = (\Lbar_k, \lambdabar_k)$ for each $k \in \ob\K$ and $\left(U\Lbar, U\lambdabar\right) = (L, \lambda)$, and moreover $\left(\Lbar, \lambdabar\right)$ is a limit cylinder for $(W, D)$. The required uniqueness of $\left(\Lbar, \lambdabar\right)$ easily follows from the uniqueness of the cylinders $(\Lbar_k, \lambdabar_k)$ ($k \in \ob\K$).       
\end{proof} 

\noindent We now obtain a dual result, for colimits:

\begin{cor_sub}
\label{limitfunctorcreatescolimits}
Let $\C$ be a $\V$-category, and let $\A : \K \to \V\CAT/\C$ be a small diagram with limit $U : \limit\A \to \C$. Let $\Lambda$ be a class of dually weighted diagrams with the property that $(W,D_k) \in \Lambda$ for every $(W,D) \in \Lambda$ in $\limit\A$ and every $k \in \ob\K$. Suppose that $U_k : \A_k \to \C$ creates $\Lambda$-colimits for each $k \in \ob\K$. Then $U : \limit \A \to \C$ creates $\Lambda$-colimits.
\end{cor_sub} 
\begin{proof}
There is an isomorphism of (mere) categories $(-)^\op:\V\CAT\slash \C \rightarrow \V\CAT \slash \C^\op$, so $U^\op:(\lim \A)^\op \rightarrow \C^\op$ is a limit of the composite diagram $(-)^\op \circ \A:\K \to \V\CAT\slash \C^\op$, and hence the result follows from \ref{limitfunctorcreateslimits}.
\end{proof}

\subsection{Algebraic colimits of \texorpdfstring{$\V$}{V}-monads in general}
\label{algebraiccolimitssubsection}

We now use the material in \ref{limitVcategories} to study algebraic colimits of $\V$-monads on a $\V$-category $\C$.  We begin with enriched generalizations of concepts from \cite[\S 26]{Kellytrans}, some of which we reformulate in terms of the semantics functor.  Let $\MM : \K \to \Mnd(\C)$ be a small diagram. For each $k \in \ob\K$ we thus have a $\V$-monad $\MM_k$ on $\C$, and for every morphism $f : k \to k'$ in $\K$ we have a morphism of $\V$-monads $\MM_f : \MM_k \to \MM_{k'}$. By composing with the semantics functor (\ref{semantics_early}), we obtain a functor $\K^\op \xrightarrow{\MM^\op} \Mnd(\C)^\op \xrightarrow{\ALG} \V\CAT/\C\;,$ whose limit we denote by
$$U^\MM : \MM\Alg \to \C\;.$$
An object of $\MM\Alg$ will be called an $\MM$\emph{-algebra} and is a pair $(A, a)$ with $A \in \ob\C$ and $a = (a_k : M_kA \to A)_{k \in \K}$ a family of $\C$-morphisms with $(A, a_k) \in \MM_k\Alg$ for each $k \in \K$, satisfying the compatibility condition $a_{k'} \circ \left(\MM_f\right)_A = a_k : M_kA \to A$ for each $f : k \to k'$ in $\K$. 

In \cite[\S 26]{Kellytrans}, Kelly defined the concept of \textit{algebraic colimit} in terms of cones and their induced morphisms, but we now reformulate its enriched generalization in terms of (co)limits preserved by the semantics functor:

\begin{defn_sub}
\label{algcolimit}
Let $\C$ be a $\V$-category and $\MM : \K \to \Mnd(\C)$ a small diagram. An \textbf{algebraic colimit of} $\MM$ is a colimit $\T = \colim \MM$ in $\Mnd(\C)$ that is sent to a limit by the semantics functor $\ALG : \Mnd(\C)^\op \to \V\CAT/\C$. An algebraic colimit of $\MM$ is therefore unique up to isomorphism if it exists, in which case we denote it by $\T_\MM$. \qed
\end{defn_sub}

\noindent The semantics functor $\ALG$ is fully faithful (\ref{semantics_early}) and therefore reflects limits, so in view of the definition of $\MM\Alg$ above we obtain the following equivalent characterization of algebraic colimits (but see \ref{algcolimitleftadjoint} for a stronger result on when they exist):

\begin{prop_sub}
\label{algcolimitequivalent}
Let $\C$ be a $\V$-category and $\MM : \K \to \Mnd(\C)$ a small diagram. Then a $\V$-monad $\T_\MM$ on $\C$ is an algebraic colimit of $\MM$ iff $\T_\MM\Alg \cong \MM\Alg$ in $\V\CAT/\C$.   Hence, $\MM$ has an algebraic colimit iff $\MM\Alg$ is a strictly monadic $\V$-category over $\C$.\qed
\end{prop_sub}  

\begin{para_sub}
\label{algcolimitalt}
Writing $\Monadic^! \hookrightarrow \V\CAT/\C$ for the full subcategory consisting of the strictly monadic $\V$-categories over $\C$, the semantics functor $\ALG$ of \ref{semantics_early} corestricts to an equivalence
\begin{equation}\label{sem_equiv}\ALG : \Mnd(\C)^\op \xrightarrow{\sim} \Monadic^!\;.\end{equation}
Hence a small diagram $\MM : \K \to \Mnd(\C)$ has an algebraic colimit if and only if the composite $\ALG \circ \MM^\op : \Mnd(\C)^\op \to \Monadic^!$ has a limit that is preserved by the inclusion $\Monadic^! \hookrightarrow \V\CAT/\C$, in which case the algebraic colimit of $\MM$ is then the $\V$-monad corresponding to this limit under the equivalence \eqref{sem_equiv}.\qed
\end{para_sub} 

\noindent The following entails that $\MM\Alg$ is strictly monadic over $\C$ as soon as $U^\MM$ has a left adjoint:
\begin{prop_sub}
\label{limitfunctorstrictlymonadic}
Let $\C$ be a $\V$-category, let $\A : \K \to \V\CAT/\C$ be a small diagram, and write $U : \limit \A \to \C$ for the limit of $\A$ in $\V\CAT/\C$. If $U_k : \A_k \to \C$ is strictly monadic for each $k \in \ob\K$ and $U$ has a left adjoint, then $U$ is strictly monadic. 
\end{prop_sub}

\begin{proof}
It suffices by \cite[II.2.1]{Dubucbook} to show that $U$ creates conical coequalizers of $U$-split pairs. But this follows from \ref{limitfunctorcreatescolimits} since each $U_k$ is strictly monadic and so creates conical coequalizers of $U_k$-split pairs by \cite[II.2.1]{Dubucbook}.  
\end{proof}

\noindent The following enrichment of \cite[26.4]{Kelly} now follows:

\begin{prop_sub}
\label{algcolimitleftadjoint}
Let $\C$ be a $\V$-category and $\MM : \K \to \Mnd(\C)$ a small diagram. Then $\MM$ has an algebraic colimit $\T_\MM$ iff the $\V$-functor $U^\MM : \MM\Alg \to \C$ has a left adjoint $F^\MM$; and then the algebraic colimit $\T_\MM$ is the $\V$-monad arising from the adjunction $F^\MM \dashv U^\MM$. 
\end{prop_sub}

\begin{proof}
By \ref{limitfunctorstrictlymonadic}, $U^\MM$ is strictly monadic iff $U^\MM$ has a left adjoint, so the result now follows from \ref{algcolimitequivalent}.
\end{proof} 

\subsection{Existence of algebraic colimits of \texorpdfstring{$\J$}{J}-ary \texorpdfstring{$\V$}{V}-monads}
\label{algebraiccolimitsJary}

Having studied algebraic colimits of general $\V$-monads, we now wish to investigate algebraic colimits of $\J$\emph{-ary} $\V$-monads for a subcategory of arities $j : \J \hookrightarrow \C$.  

\begin{defn_sub}
\label{algcolimitJary}
Let $\C$ be a $\V$-category with a subcategory of arities $j : \J \hookrightarrow \C$, and let $\MM : \K \to \Mnd_{\underJ}(\C)$ be a small diagram. A $\J$-ary $\V$-monad $\T$ on $\C$ is a \textbf{(}$\J$\textbf{-ary) algebraic colimit of} $\MM$ if $\T$ is a colimit of $\MM$ that is sent to a limit by the restricted semantics functor $\ALG : \Mnd_{\underJ}(\C)^\op \to \V\CAT/\C$. As special cases, we obtain the notions of ($\J$-ary) \textit{algebraic coequalizer}, \textit{algebraic coproduct}, etcetera. \qed
\end{defn_sub}

\begin{para_sub}\label{jary_alg_colim_rmk}
Let $\MM : \K \to \Mnd_{\underJ}(\C)$ be a small diagram.  If $\T$ is a $\J$-ary algebraic colimit of $\MM$, then $\T$ is also an algebraic colimit of the composite diagram $\K \xrightarrow{\MM} \Mnd_{\underJ}(\C) \hookrightarrow \Mnd(\C)$, because the fully faithful $\ALG : \Mnd(\C)^\op \to \V\CAT/\C$ reflects limits. Hence, by \ref{algcolimitequivalent}, $\MM$ has a $\J$-ary algebraic colimit iff $\MM\Alg$ is a strictly $\J$-monadic $\V$-category over $\C$, where we also write $\MM:\K \rightarrow \Mnd(\C)$ to denote the composite diagram. \qed
\end{para_sub}

\noindent We shall make use of the following important result proved by Kelly in \cite[27.1]{Kellytrans}. 

\begin{theo_sub}[Kelly \cite{Kellytrans}]
\label{secondKellytheorem}
Let $\C$ be a cocomplete factegory, and suppose either that $\C$ is proper or that $\C$ is $\E$-cowellpowered. If $\MM : \K \to \Mnd(\C)$ is a small diagram for which each $M_k$ ($k \in \ob\K$) is bounded, then the functor $U^\MM : \MM\Alg \to \C$ has a left adjoint. \qed
\end{theo_sub}

\noindent We now enrich \ref{secondKellytheorem} as follows:

\begin{theo_sub}
\label{secondenrichedKellytheorem}
Let $\C$ be a cocomplete $\V$-factegory that is cotensored, and suppose either that $\C$ is proper or that $\C$ is $\E$-cowellpowered. If $\MM : \K \to \Mnd(\C)$ is a small diagram for which each $M_k$ ($k \in \ob\K$) is bounded, then the $\V$-functor $U^\MM : \MM\Alg \to \C$ has a left adjoint.  
\end{theo_sub}

\begin{proof}
By composing $\MM$ with the forgetful functor $\Mnd(\C) \rightarrow \Mnd(\C_0)$ we obtain a functor that we shall write as $\MM_0:\K \rightarrow \Mnd(\C_0)$.  The hypotheses on $\C$ and $\MM$ entail that $\C_0$ and $\MM_0$ satisfy the hypotheses of \ref{secondKellytheorem}, whence $U^{\MM_0}:\MM_0\Alg \rightarrow \C_0$ has a left adjoint.  But $\MM\Alg_0 \cong \MM_0\Alg$ in $\mathsf{CAT}/\C_0$, so $U^\MM_0$ has a left adjoint.  Since each $U^{\MM_k}:\MM_k\Alg \rightarrow \C$ creates cotensors by \cite[II.4.7]{Dubucbook}, we deduce by \ref{limitfunctorcreateslimits} that $U^{\MM}$ creates cotensors, so since $\C$ is cotensored, we find that $\MM\Alg$ is cotensored and $U^\MM$ preserves cotensors. Hence $U^\MM$ has a left adjoint by \cite[4.85]{Kelly}. 
\end{proof}

\begin{assumption_sub}
For the remainder of \S \ref{algebraiccolimitsJary}, we assume that $j : \J \hookrightarrow \C$ is a small subcategory of arities in a cocomplete $\V$-factegory $\C$ that is cotensored, and we also suppose either that $\C$ is proper or that $\C$ is $\E$-cowellpowered. \qed
\end{assumption_sub} 

\begin{prop_sub}
\label{limitfunctorstrictlyJmonadic}
Let $\A : \K \to \V\CAT/\C$ be a small diagram with limit $U : \limit\A \to \C$. Suppose that each $U_k : \A_k \to \C$ ($k \in \ob\K$) is strictly $\J$-monadic and that $U$ has a left adjoint.  Then $U$ is strictly $\J$-monadic. 
\end{prop_sub} 

\begin{proof}
Since each $U_k$ ($k \in \ob\K$) is strictly monadic, it follows by \ref{limitfunctorstrictlymonadic} that $U$ is strictly monadic.  Also, each $U_k$ is strictly $\J$-monadic and hence creates $\Phi_{\underJ}$-colimits by \ref{str_jmon_prop}, so by \ref{limitfunctorcreatescolimits} we find that the strictly monadic $\V$-functor $U$ creates $\Phi_{\underJ}$-colimits and so is strictly $\J$-monadic by \ref{str_jmon_prop}.
\end{proof}  

\begin{prop_sub}
\label{freediagramalgebramonadisJary}
Let $\MM : \K \to \Mnd_{\underJ}(\C)$ be a small diagram. Then a $\J$-ary algebraic colimit $\T_\MM$ exists iff $U^\MM : \MM\Alg \to \C$ has a left adjoint.
\end{prop_sub}  

\begin{proof}
By \ref{jary_alg_colim_rmk}, a $\J$-ary algebraic colimit $\T_\MM$ exists iff $U^\MM$ is strictly $\J$-monadic, so since $\MM\Alg$ is a small limit in $\V\CAT/\C$ of strictly $\J$-monadic $\V$-categories over $\C$, the result follows from \ref{limitfunctorstrictlyJmonadic}.   
\end{proof} 

\noindent We now prove our main result of this section:

\begin{theo_sub}
\label{colimitcategoryofalgebras}
Suppose that the subcategory of arities $j : \J \hookrightarrow \C$ is bounded. Then every small diagram $\MM : \K \to \Mnd_{\underJ}(\C)$ has a $\J$-ary algebraic colimit $\T_\MM$. Hence $\Mnd_{\underJ}(\C)$ has small colimits, which are algebraic.     
\end{theo_sub}

\begin{proof}
Each $M_k$ ($k \in \ob\K$) is $\J$-ary and hence bounded by \ref{Jarybounded}, so the result follows from \ref{secondenrichedKellytheorem} and \ref{freediagramalgebramonadisJary}.
\end{proof} 

\noindent We conclude this section with the following result, which is an immediate corollary of \ref{colimitcategoryofalgebras} and the fact that the equivalence $\ALG : \Mnd(\C)^\op \xrightarrow{\sim} \Monadic^!$ restricts to an equivalence $\ALG_{\underJ} : \Mnd_{\underJ}(\C) \xrightarrow{\sim} \Monadic^!_{\underJ}$, where $\Monadic^!_{\underJ}$ is the full and replete subcategory of $\Monadic^!$ consisting of the strictly $\J$-monadic $\V$-categories over $\C$.   

\begin{cor_sub}
\label{strJmonadicclosedunderlimits}
Suppose that the subcategory of arities $j : \J \hookrightarrow \C$ is bounded. Then the full subcategory $\Monadic^!_{\underJ} \hookrightarrow \V\CAT/\C$ is closed under small limits. \qed  
\end{cor_sub}

\begin{rmk_sub}
\label{Kellytransrmk}
In \cite[27.2]{Kellytrans}, Kelly showed that if $\C$ is a cocomplete factegory that is $\E$-cowellpowered, and $\MM : \K \to \Mnd(\C)$ is a small diagram for which there is some regular cardinal $\alpha$ such that each $M_k$ ($k \in \ob\K$) preserves the $\E$-tightness of \emph{all} small $\alpha$-filtered cocones (not just $\M$\emph{-cocones}), then the algebraic colimit $\T \in \Mnd(\C)$ also preserves the $\E$-tightness of all small $\alpha$-filtered cocones. Just before the statement of \cite[27.2]{Kellytrans}, Kelly remarked that he did not see how to show that this result holds when replacing \emph{all} small $\alpha$-filtered cocones by just $\alpha$-filtered $\M$\emph{-cocones} in the hypothesis and conclusion. Our \ref{colimitcategoryofalgebras} provides an alternative to this proposition that Kelly was not able to prove (while our result is also enriched and does not require the $\E$-cowellpoweredness of $\C$): namely, if each of the $\V$-monads $\MM_k$ ($k \in \ob\K$) is $\J$-ary for an $\alpha$-bounded subcategory of arities $\J \hookrightarrow \C$, then each $M_k$ ($k \in \ob\K$) preserves the $\E$-tightness of small $\alpha$-filtered $\M$-cocones by \ref{Jarybounded}, and the resulting algebraic colimit $\T \in \Mnd(\C)$ is also $\J$-ary (by \ref{colimitcategoryofalgebras}) and hence preserves the $\E$-tightness of small $\alpha$-filtered $\M$-cocones by \ref{Jarybounded}. \qed 
\end{rmk_sub}

\section{Presentations of \texorpdfstring{$\J$}{J}-ary \texorpdfstring{$\V$}{V}-monads}
\label{presentationssection}

In this section, we use our results on algebraically free $\J$-ary $\V$-monads and algebraic colimits of $\J$-ary $\V$-monads to show that every $\J$-\textit{presentation}, consisting of a $\J$-signature $\Sigma$ and a \textit{system of $\J$-ary equations} over $\T_\Sigma$, presents a $\J$-ary $\V$-monad whose algebras are the $\Sigma$-algebras that \textit{satisfy} the given equations, and that, conversely, every $\J$-ary $\V$-monad has a canonical $\J$-presentation.

\subsection{\texorpdfstring{$\J$}{J}-presentations}
\label{Jpresentations} 

If $\T$ is a $\J$-ary $\V$-monad on $\C$, then we refer to $\U(\T) \in \JSig(\C)$ as the \emph{underlying} $\J$\emph{-signature} of $\T$. If $\Gamma$ is a $\J$-signature, then by an abuse of notation we write a morphism of $\J$-signatures $\Gamma \to \U(\T)$ as $\Gamma \to \T$.  

\begin{assumption_sub}
For the remainder of \S \ref{presentationssection}, we assume that $j : \J \hookrightarrow \C$ is a bounded and eleutheric subcategory of arities in a cocomplete $\V$-factegory $\C$ that is cotensored. We also suppose either that $\C$ is proper or that $\C$ is $\E$-cowellpowered. \qed
\end{assumption_sub}

\noindent Under these assumptions, we know by Theorem \ref{Uismonadic} that $\U : \Mnd_{\underJ}(\C) \to \JSig(\C)$ is monadic, and by Theorem \ref{colimitcategoryofalgebras} that $\Mnd_{\underJ}(\C)$ has small algebraic colimits.

\begin{defn_sub}
\label{systemequations}
Given a $\J$-ary $\V$-monad $\T$ on $\C$, a \textbf{system of $\J$-ary equations over $\T$} is a parallel pair of $\J$-signature morphisms $E = (\terml, \termr : \Gamma \rightrightarrows \T)$ for some $\J$-signature $\Gamma$ (called the \textit{signature of equations}).  A \textbf{system of $\J$-ary equations} is a pair $(\T, E)$ consisting of a $\J$-ary $\V$-monad $\T$ and a system of $\J$-ary equations $E$ over $\T$.\qed 
\end{defn_sub}

\begin{defn_sub}
\label{presentation}
A \textbf{$\J$-ary $\V$-monad presentation} (or $\J$\textbf{-presentation}) is a pair $P = (\Sigma, E)$ consisting of a $\J$-signature $\Sigma$ and a system of $\J$-ary equations $E$ over $\T_\Sigma$.  We then also call $P$ a $\J$\textbf{-presentation over $\Sigma$}. \qed 
\end{defn_sub}

\noindent We shall sometimes abuse notation and just write $P = (\terml, \termr : \Gamma \rightrightarrows \T_\Sigma)$ for a $\J$-presentation.

\begin{egg_sub}
\label{presentationexample}
In the classical setting of universal algebra, with $\C = \V = \Set$ and $\J = \FinCard$, Definition \ref{presentation} describes the usual data by which a variety of algebras is presented, as we now explain.  Given a ($\FinCard$-)signature $\Sigma$, a \textit{formal equation} $t \doteq u$ over $\Sigma$ with variables in a finite cardinal $n \in \N$ is, by definition, a pair of terms $t,u \in T_\Sigma(n)$.  A $\FinCard$-presentation $P = (\Sigma,E)$ consists of a signature $\Sigma \in \Set^\N$ and a parallel pair $E = (\terml, \termr : \Gamma \rightrightarrows \T_\Sigma)$, which we call a \textit{system of finitary equations} since it is a family of formal equations $t_{n\gamma} \doteq u_{n\gamma}$ over $\Sigma$ with variables in $n$, indexed by the finite cardinals $n \in \N$ and the elements $\gamma \in \Gamma(n)$ of an $\N$-graded set $\Gamma \in \Set^\N$.\qed
\end{egg_sub} 

\begin{para_sub}\label{tbar}
If $t:\Gamma \to \T$ is a morphism of $\J$-signatures valued in (the $\J$-signature underlying) a $\J$-ary $\V$-monad $\T$, then we write $\termlbar:\T_\Gamma \to \T$ for the morphism of $\J$-ary $\V$-monads induced by $t$, where $\T_\Gamma$ is the free $\J$-ary $\V$-monad on $\Gamma$ (\ref{freemonadalgebrasaresignaturealgebras}). \qed
\end{para_sub}

\begin{defn_sub}
\label{monadpresented}
Let $(\T,E)$ be a system of $\J$-ary equations, where $E = (\terml, \termr : \Gamma \rightrightarrows \T)$.  A \textbf{quotient} of $\T$ by $E$ is, by definition, a $\J$-ary algebraic coequalizer (\ref{algcolimitJary}) of the parallel pair $\termlbar, \termrbar : \T_\Gamma \rightrightarrows \T$
in $\Mnd_{\underJ}(\C)$.  A quotient of $\T$ by $E$ is unique up to isomorphism if it exists, in which case it is a $\J$-ary $\V$-monad that we denote by $\T/E$.  We say that a $\J$-ary $\V$-monad $\mathbb{S}$ \textbf{is presented by} $(\T,E)$ if $\mathbb{S}$ is a quotient $\T/E$, in which case we also say that $(\T,E)$ \textbf{presents} $\mathbb{S}$. \qed
\end{defn_sub}

\begin{defn_sub}
Let $P = (\Sigma,E)$ be a $\J$-presentation.  A $\J$-ary $\V$-monad $\T_P$ \textbf{is presented by} $P$ if $\T_P$ is presented by the system of $\J$-ary equations $(\T_\Sigma,E)$, i.e., if $\T_P$ is a quotient $\T_\Sigma / E$. \qed
\end{defn_sub}

\noindent By Theorem \ref{colimitcategoryofalgebras}, we obtain the following important result:

\begin{theo_sub}
\label{everypresentationpresentsJarymonad}
Every $\J$-presentation $P = (\Sigma,E)$ presents a $\J$-ary $\V$-monad $\T_P = \T_{\Sigma}/E$.  More generally, every system of $\J$-ary equations $(\T,E)$ presents a $\J$-ary $\V$-monad $\T/E$.  \qed 
\end{theo_sub} 

\noindent Next we show that every $\J$-ary $\V$-monad is presented by at least one $\J$-presentation.

\begin{para_sub}[\textbf{The canonical presentation of a $\J$-ary $\V$-monad}]
Since $\U : \Mnd_{\underJ}(\C) \to \JSig(\C)$ is monadic by Theorem \ref{Uismonadic}, we know that every $\J$-ary $\V$-monad $\T$ on $\C$ is canonically a coequalizer, as in
\begin{equation}\label{eq:can_coeq}
\begin{tikzcd}
	{\mathcal{F}\mathcal{U}\mathcal{F}\mathcal{U}\T} && {\mathcal{F}\mathcal{U}\T} & \T,
	\arrow["{\mathcal{F}\mathcal{U}\varepsilon \T}"', shift right=1, from=1-1, to=1-3]
	\arrow["{\varepsilon \T}", two heads, from=1-3, to=1-4]
	\arrow["{\varepsilon \mathcal{F}\mathcal{U}\T}", shift left=1, from=1-1, to=1-3]
\end{tikzcd}
\end{equation}
where $\mathcal{F}$ is the left adjoint of $\mathcal{U}$ and $\varepsilon$ is the counit of this adjunction.  Using the triangular equations and the naturality of the unit $\eta$ of the adjunction $\mathcal{F} \dashv \mathcal{U}$, the transposes of $\varepsilon \mathcal{F}\mathcal{U}\T$ and $\mathcal{F}\mathcal{U}\varepsilon \T$ under the adjunction $\F \dashv \U$ may be expressed as the following $\J$-signature morphisms
\begin{equation}\label{eq:can_pres1}
\begin{tikzcd}
	{\U\F\U\T} & & & {\U\F\U\T},
	\arrow["\eta \U\T \,\circ\, \U\varepsilon \T"', shift right=1, from=1-1, to=1-4]
	\arrow["1_{\U\F\U\T}", shift left=1, from=1-1, to=1-4]
\end{tikzcd}
\end{equation}
which may be regarded as a system of $\J$-ary equations, as follows.  Letting $\Sigma_\T = \U\T$ and $\Gamma_\T = \U\F\U\T$, note that $\F\U\T = \T_{\Sigma_\T}$ with the notation of \ref{freemonadalgebrasaresignaturealgebras}.  Hence the morphisms \eqref{eq:can_pres1} constitute a system of $\J$-ary equations $E_\T = (t_\T,u_\T:\Gamma_\T \rightrightarrows \T_{\Sigma_\T})$. Thus we obtain a $\J$-presentation $P_\T = (\Sigma_\T,E_\T)$, and since the coequalizer \eqref{eq:can_coeq} is algebraic by \ref{colimitcategoryofalgebras}, we deduce that $P_\T$ presents $\T$.  We call $P_\T$ the \textbf{canonical $\J$-presentation of $\T$}.  In particular, this proves the following corollary of \ref{Uismonadic} and \ref{colimitcategoryofalgebras}:
\end{para_sub}

\begin{cor_sub}
\label{presentationcor}
Every $\J$-ary $\V$-monad $\T$ on $\C$ has a $\J$-presentation. \qed
\end{cor_sub}

\subsection{Algebras for \texorpdfstring{$\J$}{J}-presentations} 
\label{Palgebras}

Given a $\J$-presentation $P = (\Sigma,E)$, which presents a $\J$-ary $\V$-monad $\T_P$ by \ref{everypresentationpresentsJarymonad}, we now show that the $\V$-category of $\T_P$-algebras is isomorphic to a full sub-$\V$-category of $\Sigma\Alg$ consisting of those $\Sigma$-algebras that \emph{satisfy} the system of $\J$-ary equations $E$ (\ref{presentationsatisfaction}).  More generally, we show that if $(\T,E)$ is any system of $\J$-ary equations over a $\J$-ary $\V$-monad $\T$, then $(\T/E)\Alg$ is isomorphic to the full sub-$\V$-category of $\T\Alg$ consisting of the $\T$-algebras that satisfy $E$ in the sense of \ref{generalizedsatisfaction} below.

\begin{defn_sub}\label{interp_hom}
Let $(A,a)$ be a $\T$-algebra for a $\J$-ary $\V$-monad $\T = (T, \eta, \mu)$ on $\C$.  For each $J \in \ob\J$, we write
$$\llb-\rrb^A_J \;:\; TJ \longrightarrow [\C(J, A), A]$$
to denote the transpose of the composite $\C(J,A) \xrightarrow{T_{JA}} \C(TJ,TA) \xrightarrow{\C(TJ,a)} \C(TJ,A)$, and we call $\llb-\rrb^A_J$ the \textbf{interpretation morphism} for the $\T$-algebra $(A,a)$ and the arity $J$.  Given a morphism $t:C \rightarrow TJ$ in $\C$, we call
$$\llb t \rrb^A_J \;:=\; \llb-\rrb^A_J \circ t\;:\;C \rightarrow [\C(J,A),A]$$
the \textbf{interpretation\footnote{Similar terminology and notation are used in \cite{termequationalsystems} in a different setting.} of $t$ in $A$}. \qed
\end{defn_sub}

\begin{defn_sub}
\label{generalizedsatisfaction}
Let $(\T,E)$ be a system of $\J$-ary equations, where $E = (\terml, \termr : \Gamma \rightrightarrows \T)$, and let $(A, a)$ be a $\T$-algebra. Then $(A, a)$ \textbf{satisfies} $E$, or is a $(\T,E)$\textbf{-algebra}, if
$$\llb t_J \rrb^A_J = \llb u_J \rrb^A_J\;:\;\Gamma J \rightarrow [\C(J,A),A]$$
for all $J \in \ob\J$.  We let $(\T,E)\Alg$ be the full sub-$\V$-category of $\T\Alg$ consisting of the $(\T,E)$-algebras. 
\qed 
\end{defn_sub} 

\noindent Given a $\J$-signature $\Sigma$, we know that $\Sigma\Alg \cong \T_\Sigma\Alg$ in $\V\CAT/\C$ (\ref{freemonadalgebrasaresignaturealgebras}).  Thus, in formulating the following definition, we may regard $\Sigma$-algebras equivalently as $\T_\Sigma$-algebras.

\begin{defn_sub}
\label{presentationsatisfaction}
Let $P = (\Sigma, E)$ be a $\J$-presentation, so that $E$ is a system of $\J$-ary equations over $\T_\Sigma$.  We say that a $\Sigma$-algebra $(A,a)$ \textbf{satisfies $E$}, or is a \textbf{$P$-algebra}, if its corresponding $\T_\Sigma$-algebra satisfies $E$.  We write $P\Alg$ to denote the full sub-$\V$-category of $\Sigma\Alg$ consisting of the $P$-algebras. \qed
\end{defn_sub}

\begin{para_sub}
\label{palg_iso_to_tsigmaealg}
Given a $\J$-presentation $P = (\Sigma,E)$, $P\Alg \cong (\T_\Sigma,E)\Alg$ in $\V\CAT \slash \C$ by \ref{freemonadalgebrasaresignaturealgebras}. \qed
\end{para_sub}

\begin{egg_sub}
\label{satisfyingpresentationexample}
Let us see how \ref{presentationsatisfaction} generalizes the standard notion from universal algebra, where $\C = \V = \Set$ and $\J = \FinCard$. If $\Sigma$ is a $\FinCard$-signature and $(A, a)$ is a $\Sigma$-algebra, so that $A$ is a set equipped with operations $\omega^A:A^n \rightarrow A$ $(n \in \N, \omega \in \Sigma n)$, then for each $n \in \N$ the interpretation morphism $\llb-\rrb^A_n : T_\Sigma(n) \to \Set\left(A^n, A\right)$ sends each term $t$ in $n$ variables to its interpretation $\llb t\rrb^A_n : A^n \to A$. Given a $\FinCard$-presentation $P = (\Sigma,E)$, recall from \ref{presentationexample} that $E$ is a family of formal equations $t_{n\gamma} \doteq u_{n\gamma}$ indexed by the elements $\gamma \in \Gamma n$ $(n \in \N)$ of an $\N$-graded set $\Gamma$.  A $P$-algebra is a $\Sigma$-algebra $(A, a)$ that satisfies each of these formal equations $t_{n\gamma} \doteq u_{n\gamma}$ $(n \in \N, \gamma \in \Gamma n)$, in the sense that $\llb t_{n\gamma} \rrb^A_n = \llb u_{n\gamma} \rrb^A_n:A^n \rightarrow A$.\qed
\end{egg_sub}

\noindent Our objective is now to show that if $(\T,E)$ is a system of $\J$-ary equations presenting the $\J$-ary $\V$-monad $\T/E$ by \ref{everypresentationpresentsJarymonad}, then $(\T/E)\Alg \cong (\T,E)\Alg$ in $\V\CAT/\C$. To facilitate this, we employ the following notation, generalizing ideas from \cite[\S 4]{KellyPower}:

\begin{para_sub}\label{anglebrackets}
We may regard objects $C, D \in \ob\C$ also as $\V$-functors $C, D : \II \to \C$ from the unit $\V$-category $\II$.  The right Kan extension of $D$ along $C$ is then the $\V$-functor $\Ran_C D = [\C(-,C),D]:\C \rightarrow \C$, whose restriction along $j$ is therefore $\Ran_C D \circ j = [\C(j-,C),D]$.  We denote the left Kan extension of $\Ran_C D \circ j$ along $j$ by
$$\la C, D\ra = \Lan_j\left(\Ran_CD \circ j\right)\;:\;\C \longrightarrow \C\;,$$
so that $\la C,D\ra$ is $\J$-ary by \ref{charns_jary}. Since $j$ is fully faithful, the restriction of $\la C,D\ra$ to $\J$ is $[\C(j-,C),D]$.  For each $C \in \ob\C$ we obtain a functor $\la C, -\ra : \C_0 \to \End_{\underJ}(\C)$.  Let us also write $\ev^j : \End_{\underJ}(\C) \times \C_0 \to \C_0$ to denote the functor obtained as a restriction of the evaluation functor $\ev:\End(\C) \times \C_0 \rightarrow \C_0$, so that for each $C \in \ob\C$ we obtain a functor $\ev^j(-,C):\End_{\underJ}(\C) \rightarrow \C_0$. \qed
\end{para_sub}

\begin{prop_sub}
\label{anglebracketsadjunction}
For each $C \in \ob\C$, we have $\ev^j(-,C) \dashv \la C, -\ra : \C_0 \to \End_{\underJ}(\C)$.
\end{prop_sub}

\begin{proof}
By \ref{eleuthericequivalence}, the restriction $\V$-functor $j^*:\End_{\underJ}(\C) \rightarrow \V\CAT(\J,\C)$ is an equivalence, with pseudo-inverse $\Lan_j'$ given by left Kan extension along $j$.  Identifying $\C_0$ with $\V\CAT(\II, \C)$, the functor $\la C, - \ra : \C_0 \to \End_{\underJ}(\C)$ is isomorphic to the composite functor
$$\C_0 \xrightarrow{\Ran_C(-)} \End(\C) \xrightarrow{(-) \circ j} \V\CAT(\J, \C) \xrightarrow{\Lan'_j} \End_{\underJ}(\C)$$
and therefore has a left adjoint $\End_{\underJ}(\C) \xrightarrow{j^*} \V\CAT(\J, \C) \xrightarrow{\Lan_j} \End(\C) \xrightarrow{\ev(-,C)} \C_0$. But this left adjoint is isomorphic to $\ev^j(-,C)$ because $\Lan_j \circ j^*$ is isomorphic to the inclusion $\End_{\underJ}(\C) \hookrightarrow \End(\C)$.
\end{proof}

\begin{para_sub}\label{transpose_lrlangle}
Given objects $C,D \in \ob\C$, a $\J$-ary $\V$-endofunctor $H$ on $\C$, and a morphism $f:HC \to D$ in $\C$, let us write $f^\sharp:H \rightarrow \la C,D\ra$ to denote the transpose of $f$ under the adjunction in \ref{anglebracketsadjunction}.  For each $J \in \ob\J$, the component $f^\sharp_J:HJ \to \la C,D\ra J = [\C(J,C),D]$ is the transpose of the composite $\C(J,C) \xrightarrow{H_{JC}} \C(HJ,HC) \xrightarrow{\C(HJ,f)} \C(HJ,D)$. \qed
\end{para_sub}

\begin{prop_sub}
\label{anglebracketsmonad}
The functor $\ev^j : \End_{\underJ}(\C) \times \C_0 \to \C_0$ is a right-closed strict action \cite[2.2]{BJK} of the strict monoidal category $\End_{\underJ}(\C)$ on the category $\C_0$. For each $A \in \ob\C$, the $\J$-ary $\V$-endofunctor $\la A, A \ra$ underlies a $\V$-monad, and morphisms $\T \to \la A, A\ra$ in $\Mnd_{\underJ}(\C)$ are in bijective correspondence with $\T$-algebra structures on $A$, naturally in $\T \in \Mnd_{\underJ}(\C)$.
\end{prop_sub}

\begin{proof}
It is clear that $\ev^j$ is a strict action, which is right-closed by \ref{anglebracketsadjunction}. We then deduce from \cite[2.2]{BJK} that for every $A \in \ob\C$, the object $\la A, A\ra$ of $\End_{\underJ}(\C)$ underlies a monoid in $\End_{\underJ}(\C)$, and that $\Mnd_{\underJ}(\C)(\T, \la A, A\ra) \cong \mathsf{Act}(\T, A)$ naturally in $\T \in \Mnd_{\underJ}(\C)$, where $\mathsf{Act}(\T, A)$ is the set of $\T$-algebra structures on $A$.
\end{proof} 

\begin{para_sub}\label{interp_mnd_mor}
Let $(A,a)$ be a $\T$-algebra for a $\J$-ary $\V$-monad $\T = (T,\eta,\mu)$.  Then, by \ref{anglebracketsmonad},the $\T$-algebra structure $a:TA \to A$ corresponds to a morphism of $\V$-monads
$$a^\sharp:\T \longrightarrow \la A,A\ra$$
obtained as the transpose of $a$ under the adjunction of \ref{anglebracketsadjunction}.  Hence, in view of \ref{interp_hom} and \ref{transpose_lrlangle}, the component of $a^\sharp$ at each object $J$ of $\J$ is the interpretation morphism $a^\sharp_J = \llb-\rrb^A_J:TJ \to \la A,A\ra J = [\C(J, A), A]$ of \ref{interp_hom}.  For this reason, we denote both $a^\sharp$ and its underlying $\J$-signature morphism as
$$\llb-\rrb^A :\T \longrightarrow \la A,A\ra\;\;\;\;\;\;\;\;\llb-\rrb^A = \U(a^\sharp):\U\T \longrightarrow \U\la A,A\ra$$
and refer to these both as the \textbf{$\J$-ary interpretation morphisms} for $(A,a)$.

Given also a morphism of $\J$-ary $\V$-monads $\tau:\mathbb{S} \rightarrow \T$, we write
\begin{equation}\label{eq:interp_mnd_mor}\llb \tau\rrb^A := \llb - \rrb^A \circ \tau\;:\;\mathbb{S} \longrightarrow \la A,A\ra.\end{equation}
Similarly, given a morphism of $\J$-signatures $t:\Gamma \rightarrow \T$, with the abuse of notation of \S \ref{Jpresentations}, we write
\begin{equation}\label{eq:interp_sig}\llb t\rrb^A := \llb - \rrb^A \circ t\;:\;\Gamma \longrightarrow \la A,A\ra.\end{equation}
\end{para_sub}

\begin{prop_sub}\label{charns_te_alg}
Let $E = (\terml, \termr : \Gamma \rightrightarrows \T)$ be a system of $\J$-ary equations over a $\J$-ary $\V$-monad $\T$, and let $(A,a)$ be a $\T$-algebra.  The following are equivalent: (1) $(A,a)$ satisfies $E$; (2) $\llb t \rrb^A = \llb u \rrb^A:\Gamma \to \la A,A\ra$ with the notation of \eqref{eq:interp_sig}; (3) $\bigl\llb \termlbar\bigr\rrb^A = \bigl\llb \termrbar\bigr\rrb^A:\T_\Gamma \to \langle A,A\rangle$ with the notations of \ref{tbar} and \eqref{eq:interp_mnd_mor}; (4) $a \circ \termlbar_A = a \circ \termrbar_A:T_\Gamma A \to A$.
\end{prop_sub}
\begin{proof}
The equivalence of (1)-(3) is immediate from \ref{interp_mnd_mor} (in view of \ref{freemonadvariant}), while the equivalence of (3) and (4) follows from the adjointness in \ref{anglebracketsadjunction}.
\end{proof}

\noindent In order to use \ref{charns_te_alg} to obtain an isomorphism $(\T,E)\Alg \cong (\T/E)\Alg$, we shall require the following:

\begin{lem_sub}
\label{equalizerlem}
Let
$\begin{tikzcd}
	{(\X, Q)} & {(\Y, R)} & {(\Z, S)}
	\arrow["M", tail, from=1-1, to=1-2]
	\arrow["F", shift left=1.1, from=1-2, to=1-3]
	\arrow["G"', shift right=1.1, from=1-2, to=1-3]
\end{tikzcd}$
be an equalizer in $\V\CAT/\C$ (with the notation of \S \ref{background}). If $S$ is faithful, then $M$ is fully faithful (in addition to being injective on objects). 
\end{lem_sub}

\begin{proof}
Letting $X, X' \in \ob\X$, we shall show that $M_{XX'}:\X(X, X') \to \Y(Y, Y')$ is an isomorphism, where $Y = MX$ and $Y' = MX'$.  Letting  $Z = FY = GY$ and $Z' = FY' = GY'$, we have $S_{ZZ'} \circ F_{YY'} = (SF)_{YY'} = R_{YY'} = (SG)_{YY'} = S_{ZZ'} \circ G_{YY'}$, so that $F_{YY'} = G_{YY'}$ since $S_{ZZ'}$ is a monomorphism (as $S$ is faithful). But since equalizers in $\V\CAT/\C$ are formed as in $\V\CAT$, $M_{XX'}$ is an equalizer of $F_{YY'},G_{YY'}$ in $\V$, so $M_{XX'}$ is an isomorphism.
\end{proof}

\begin{theo_sub}
\label{Palgebrasisomorphism}
Let $\T/E$ be the $\J$-ary $\V$-monad presented by a system of $\J$-ary equations $(\T,E)$, where $E = (\terml, \termr: \Gamma \rightrightarrows \T)$. Then $(\T/E)\Alg \cong (\T,E)\Alg$ in $\V\CAT/\C$. 
\end{theo_sub}

\begin{proof}
Since $\T/E$ is an algebraic coequalizer $\T_\Gamma \overset{\termlbar,\termrbar}{\rightrightarrows} \T \overset{q}{\twoheadrightarrow} \T/E$, we have an equalizer diagram
\[\begin{tikzcd}
	{(\T/E)\Alg} && \T\Alg && {\T_\Gamma\Alg}
	\arrow["q^*", tail, from=1-1, to=1-3]
	\arrow["{\overline{\terml}^*}", shift left=1, from=1-3, to=1-5]
	\arrow["{\overline{\termr}^*}"', shift right=2, from=1-3, to=1-5]
\end{tikzcd}\]
in $\V\CAT/\C$.  Since $U^{\T_\Gamma} : \T_\Gamma\Alg \to \C$ is faithful, it follows by \ref{equalizerlem} that $q^*$ is fully faithful, in addition to being injective on objects.  Hence, $(\T/E)\Alg$ is isomorphic to the full sub-$\V$-category of $\T\Alg$ consisting of those $\T$-algebras $(A,a)$ with $\termlbar^*(A,a) = \termrbar^*(A,a)$, i.e. with $a \circ \termlbar_A = a \circ \termrbar_A$.  But by \ref{charns_te_alg}, the latter are precisely the $(\T,E)$-algebras.
\end{proof}

\begin{theo_sub}
\label{Palgebrasisomorphismcor}
Let $P = (\Sigma, E)$ be a $\J$-presentation, and let $\T_P$ be the $\J$-ary $\V$-monad presented by $P$. Then $\T_P\Alg \cong P\Alg$ in $\V\CAT/\C$. 
\end{theo_sub}
\begin{proof}
$\T_P\Alg = (\T_\Sigma/E)\Alg \cong (\T_\Sigma,E)\Alg \cong P\Alg$ in $\V\CAT/\C$ by \ref{Palgebrasisomorphism} and \ref{palg_iso_to_tsigmaealg}.
\end{proof}

\section{Some examples of \texorpdfstring{$\J$}{J}-presentations}
\label{firstexamples}

We now discuss some examples of $\J$-presentations and their algebras, beyond the familiar finitary presentations of classical universal algebra (\ref{presentationexample}) and the examples in the finitely presentable setting discussed in \cite{Robinson}. As mentioned in the introduction, in a forthcoming paper \cite{Pres2} we shall give many more examples of $\J$-presentations after first providing therein a versatile and `user-friendly' method for constructing them.     
 
\subsection{Presentations of \texorpdfstring{$\V$}{V}-categories by generators and relations}\label{representablesexample}

We now consider what signatures, presentations, and their algebras amount to in the context of the bounded and eleutheric subcategory of arities $\y_X:X \rightarrow [X,\V] = \V^X$ of \ref{presheaf_as_em}, given by $\y_X(x) = X(x,-)$, where $X$ is a set, regarded also as a discrete $\V$-category.  By definition, a $\y_X$-signature is a $\V$-functor $\Sigma : X \to \V^X$, or equivalently, a $\V$-functor $\Sigma : X \tensor X \to \V$, where we write $\Sigma(x,y) = (\Sigma x)y$, noting that $X \otimes X$ is the discrete $\V$-category on $X \times X$.  Hence $\y_X$-signatures may be identified with \emph{$\V$-matrices from $X$ to $X$} (\ref{presheaf_as_em}) or \emph{$\V$-graphs with object set $X$} (see \cite{WolffVcat}), i.e. families of objects $\Sigma = (\Sigma(x,y))_{(x,y) \in X\times X}$ of $\V$.

The category $\Sig_{\y_X}(\V^X)$ of $\y_X$-signatures may thus be identified with the category $\V\Graph(X) = \VMat(X,X)$ of $\V$-graphs with object set $X$ and identity-on-objects $\V$-graph morphisms.  If $\Sigma : X \tensor X \to \V$ is a $\y_X$-signature, i.e. a $\V$-graph with object set $X$, then a $\Sigma$-algebra is (by definition) a $\V$-functor $A : X \to \V$ equipped with structural morphisms $\alpha_x: \V^X(\y_X(x),A)\otimes \Sigma(x,-) \rightarrow A$ in $\V^X$ $(x \in X)$. By the Yoneda lemma, a $\Sigma$-algebra is therefore given by a family of objects $A(x)$ $(x \in X)$ of $\V$ and a family of morphisms $\alpha_{xy}:\Sigma(x,y) \rightarrow \V(A(x),A(y))$ $(x,y \in X)$.  Hence a $\Sigma$-algebra is equivalently a morphism of $\V$-graphs $A : \Sigma \to \V$ from $\Sigma$ to the underlying $\V$-graph of the $\V$-category $\V$.

Recall from \ref{presheaf_as_em} that the category $\Mnd_{\y_X}(\V^X)$ of $\y_X$-ary $\V$-monads on $\V^X$ is equivalent to the category $\V\Cat(X)$ of $\V$-categories with object set $X$. The free $\y_X$-ary $\V$-monad $\T_\Sigma$ on a $\y_X$-signature $\Sigma$, which exists by \ref{freemonadalgebrasaresignaturealgebras}, may therefore be identified with the free $\V$-category $\scrT_\Sigma$ on the $\V$-graph $\Sigma$, which was first proved to exist in \cite[2.2]{WolffVcat}. The monadic adjunction between $\Mnd_{\y_X}(\V^X)$ and $\Sig_{\y_X}(\V^X)$ (see \ref{Uismonadic}) can now be identified with a monadic adjunction between $\V\Cat(X)$ and $\V\Graph(X)$, which is a restriction of the monadic adjunction between $\V\Cat$ and $\V\Graph$ that was first established in \cite[2.13]{WolffVcat}.  By \ref{presheaf_as_em} and \ref{freemonadalgebrasaresignaturealgebras}, we have isomorphisms $[\scrT_\Sigma,\V] \cong \T_\Sigma\Alg \cong \Sigma\Alg$ that enable us to identify $\V$-functors from $\scrT_\Sigma$ to $\V$, $\T_\Sigma$-algebras, and $\V$-graph morphisms from $\Sigma$ to $\V$.

By the foregoing, a $\y_X$-presentation may now be identified with a parallel pair $P = (\terml, \termr : \Gamma \rightrightarrows \scrT_\Sigma)$ of identity-on-objects $\V$-graph morphisms, for a $\V$-graph $\Sigma$ with object set $X$, while a system of $\y_X$-ary equations may be identified with a parallel pair $E = (\terml, \termr : \Gamma \rightrightarrows \scrT)$ of identity-on-objects $\V$-graph morphisms valued in the underlying $\V$-graph of a $\V$-category $\scrT$ with object set $X$; these systems of $\y_X$-ary equations were considered by Wolff in \cite[2.5]{WolffVcat} under the name \emph{pre-$\V$-congruences}. In a $\y_X$-presentation, one has for each pair $x, y \in X$ a parallel pair of $\V$-morphisms $\terml_{xy}, \termr_{xy} : \Gamma(x, y) \rightrightarrows \scrT_\Sigma(x, y)$, where $\scrT_\Sigma(x, y)$ has an explicit (combinatorial) description given in \cite[2.2]{WolffVcat}. In the $\V = \Set$ case, $\y_X$-presentations are the usual presentations of categories by generators and relations, where the generators $\sigma:x \rightarrow y$ are the edges in the given graph $\Sigma$ ($x,y \in X, \sigma \in \Sigma(x,y)$) and the relations $t_{xy}(\gamma) \doteq u_{xy}(\gamma)$ $(x,y \in X,\gamma \in \Gamma(x,y))$ are pairs of arrows $t_{xy}(\gamma), u_{xy}(\gamma): x \rightrightarrows y$ in the free category on $\Sigma$, i.e. paths in $\Sigma$, which will become equal in the quotient category $\scrT_P$. The general $\V$-based case may now be understood as an abstraction of the $\Set$-based case, so that a $\y_X$-presentation is a presentation of a $\V$-category with object set $X$ by generators and relations, the \textit{generators} being provided by the $\V$-graph $\Sigma$, and the \textit{relations} by the parallel pair $\terml, \termr$. Given a $\y_X$-presentation $P = (\terml, \termr : \Gamma \rightrightarrows \scrT_\Sigma)$, a $P$-algebra can now be identified with a $\V$-graph morphism $A : \Sigma \to \V$ with the property that the unique $\V$-functor $A^\sharp : \scrT_\Sigma \to \V$ extending $A$ coequalizes $\terml$ and $\termr$.

As a special case, if $X$ is a singleton set $\{*\}$, then $\y_X$-ary presentations may be described as presentations of monoids in the monoidal category $(\V_0,\otimes,I)$ by generators and relations.

\subsection{Strongly finitary \texorpdfstring{$\V$}{V}-monads on cartesian closed topological categories}\label{sf_vmds_top_ccc}

In this section, we suppose that $\V$ is a cartesian closed topological category over $\Set$ as in \ref{free_sf_vmnds_top_ccc} and, using the notations and results from that section, we discuss $\SF(\V)$-presentations, i.e. presentations of strongly finitary $\V$-monads, providing some specific examples in \ref{mod_rigs} and \ref{raff}.  Given an object $X$ of $\V$, we also write $X$ to denote the underlying set $|X|$ of $X$.  In particular, given objects $X,Y$ of $\V$, the elements of (the underlying set of) the internal hom $\V(X, Y)$ are the morphisms $f:X \rightarrow Y$ in $\V_0$, which are certain functions $f:X \rightarrow Y$.  The objects of $\SF(\V)$ are the finite cardinals $n \in \N$, each of which we regard also as the discrete object of $\V_0$ on $n$ \cite[21.12]{AHS}, or equally the $n$th copower of $1$ in $\V_0$.

\begin{defn_sub}
\label{underlyingpresentation}
Let $P = \left(\terml, \termr : \Gamma \rightrightarrows \T_\Sigma\right)$ be an $\SF(\V)$-presentation. By  \ref{topologicalliftingprop}, the underlying set of $T_\Sigma(n)$ for each finite cardinal $n$ is the set $T_{|\Sigma|}(n)$ of all terms with variables in $n$ over the underlying $\FinCard$-signature $|\Sigma|$.  Hence, the functions underlying $t_n,u_n:\Gamma(n) \rightrightarrows T_\Sigma(n)$ are morphisms $|t_n|,|u_n|:|\Gamma(n)| \rightrightarrows T_{|\Sigma|}(n)$ in $\Set$ $(n \in \N)$ and so constitute a $\FinCard$-presentation $|P| = \left(|\terml|, |\termr| : |\Gamma| \rightrightarrows \T_{|\Sigma|}\right)$ that we call the \textbf{underlying $\FinCard$-presentation} of $P$. \qed
\end{defn_sub}

\begin{para_sub}\label{interp_in_underlying}
Let $(A,a)$ be a $\Sigma$-algebra for an $\SF(\V)$-signature $\Sigma$.  Recalling that $T_\Sigma A$ is the free $\Sigma$-algebra on the object $A$ of $\V$, and writing $\eta^\Sigma_A:A \rightarrow T_\Sigma A$ for the unit morphism, let us write $\alpha:T_\Sigma A \rightarrow A$ for the $\T_\Sigma$-algebra structure carried by $A$, i.e. the unique $\Sigma$-homomorphism $\alpha$ such that $\alpha \circ \eta^\Sigma_A = 1_A$.  By \ref{topologicalliftingprop}, $|\eta^\Sigma_A| = \eta^{|\Sigma|}_{|A|}:|A| \rightarrow |T_\Sigma A| = T_{|\Sigma|}|A|$, so $|\alpha|$ is precisely the $\T_{|\Sigma|}$-algebra structure carried by the $|\Sigma|$-algebra $(|A|,|a|)$ underlying $(A,a)$ (\S \ref{free_sf_vmnds_top_ccc}).

For each finite cardinal $n$, the interpretation map $\llb-\rrb_n^A:T_\Sigma(n) \rightarrow \V(A^n,A)$ sends each $t \in T_\Sigma(n)$ to the map $\llb t\rrb_n^A:A^n \rightarrow A$ given by $\llb t\rrb_n^A(x) = \alpha((T_\Sigma x)(t))$ for all $x \in A^n = \V(n,A)$.  Also, since the $\T_{|\Sigma|}$-algebra structure carried by the $|\Sigma|$-algebra $(|A|,|a|)$ is precisely $|\alpha|$, its interpretation map $\llb-\rrb_n^{|A|}:T_{|\Sigma|}(n) \rightarrow \Set(|A|^n,|A|)$ is given by a similar formula, so in view of \ref{topologicalliftingprop} we find that $\llb t\rrb_n^{|A|}:|A|^n \rightarrow |A|$ is the function underlying $\llb t\rrb_n^A:A^n \rightarrow A$, for all $t \in T_{|\Sigma|}(n) = |T_{\Sigma}(n)|$. \qed
\end{para_sub}

\begin{lem_sub}
\label{presentationlem}
Let $P = (\Sigma,E)$ be an $\SF(\V)$-presentation.  Then there is a pullback square
\begin{equation}\label{palg_pb_square}
\xymatrix{
{P\Alg_0\;} \ar[d]_{V^P} \ar@{^(->}[r] & \Sigma\Alg_0 \ar[d]^{V^\Sigma}\\
{|P|\Alg\;} \ar@{^(->}[r] & |\Sigma|\Alg\\
}
\end{equation}
in $\mathsf{CAT}$.  In particular, a $P$-algebra is precisely a $\Sigma$-algebra whose underlying $|\Sigma|$-algebra is a $|P|$-algebra.  Also, the hom-object $P\Alg(A,B) = \Sigma\Alg(A,B)$ between $P$-algebras $A$ and $B$ is the subspace $|P|\Alg(|A|,|B|) \cap \V_0(A,B) \hookrightarrow \V(A,B)$ (with the $V$-initial structure).
\end{lem_sub}
\begin{proof}
Since the top and bottom sides are full subcategory inclusions, the first claim follows from the second, which follows straightforwardly from \ref{interp_in_underlying}.  The third claim follows from \ref{vcat_sig_algs}, since limits in $\V_0$ are formed by equipping the limit in $\Set$ with the $V$-initial structure \cite[21.15]{AHS}.
\end{proof}

\begin{rmk_sub}\label{eqn_discrete}
Lemma \ref{presentationlem} entails that $P\Alg$ depends only on $\Sigma$ and $|P|$, in an evident sense.  Hence, in this setting, it suffices to consider $\SF(\V)$-presentations $P = \left(\terml, \termr : \Gamma \rightrightarrows \T_\Sigma\right)$ that are \textbf{equation-discrete} in the sense that $\Gamma$ is \textbf{discrete}, i.e., each $\Gamma(n)$ is discrete $(n \in \N)$ \cite[8.1]{AHS}.  Indeed, in view of \cite[21.12]{AHS}, we can associate to each $\SF(\V)$-presentation $P = (\Sigma,E)$ an equation-discrete $\SF(\V)$-presentation $P'$ over $\Sigma$ with $P'\Alg = P\Alg$ as objects of $\V\CAT/\C$, so that $\T_{P'} \cong \T_P$ as well. \qed
\end{rmk_sub}

\noindent In \cite[5.1]{ADV}, Ad\'amek, Dost\'al, and Velebil show that every strongly finitary enriched monad on the category of posets is a lifting of a finitary monad on $\Set$.  The following provides an analogous result for cartesian closed topological categories over $\Set$:

\begin{theo_sub}
Every strongly finitary $\V$-monad $\T$ on $\V$ is a strict lifting of a finitary monad on $\Set$.  In particular, given any $\SF(\V)$-presentation $P = (\Sigma,E)$, the strongly finitary $\V$-monad $\T_P$ presented by $P$ is a non-strict lifting of the finitary monad $\T_{|P|}$ presented by $|P|$, while we may construct $\T_P$ in such a way that it is a strict lifting of $\T_{|P|}$.
\end{theo_sub}
\begin{proof}
Regarding the second claim, $V^\Sigma:\Sigma\Alg_0 \rightarrow |\Sigma|\Alg$ is topological by 
\ref{wtop}, so since \eqref{palg_pb_square} is a pullback whose lower edge is fully faithful, it follows that $V^P$ is topological and that the upper edge sends $V^P$-initial sources to $V^\Sigma$-initial sources.  Consequently, by \ref{wtop}, the rectangle obtained by pasting \eqref{palg_pb_square} onto the square \eqref{eq:tlsq} satisfies the hypotheses of Wyler's taut lift theorem \cite[21.28]{AHS}, and the second claim follows.  Now given instead an arbitrary $\SF(\V)$-ary $\V$-monad $\T = (T,\eta,\mu)$ on $\V$, there is an $\SF(\V)$-presentation $P$ with $\T \cong \T_P$ by \ref{presentationcor}, and without loss of generality $\T_P = (T_P,\eta^P,\mu^P)$ is a strict lifting of the finitary monad $\T_{|P|}$.  Hence $\T_{|P|} = (VT_P D,V\eta^P D, V\mu^P D)$ by \ref{para_lifting}, where $D$ is the left adjoint section of $V$ and we do not distinguish notationally between $\T_P$ and its underlying ordinary monad. Hence, since $\T \cong \T_P$, it follows that $\T$ is a strict lifting of the monad $(VTD,V\eta D,V\mu D)$, which is isomorphic to $\T_{|P|}$ and so is finitary.
\end{proof}
 
\noindent We now use \ref{topologicalliftingprop} and \ref{presentationlem} to provide some examples of $\SF(\V)$-presentations.

\begin{egg_sub}[\textbf{Modules for internal rigs}]\label{mod_rigs}
Let $\left(R, +^R, \cdot^R, 0^R, 1^R\right)$ be an \emph{internal rig} in $\V$ (see e.g. \cite[2.7]{functional}), so that $|R|$ is a rig (i.e. a unital semiring) in $\Set$. We now construct an $\SF(\V)$-presentation whose algebras are \emph{internal left} $R$\emph{-modules} in $\V$ (in the sense of, e.g., \cite[2.7]{functional}).  Note that, in the present case, where $\V$ is a cartesian closed topological category over $\Set$, an internal left $R$-module is equivalently an object $M$ of $\V$ equipped with $\V$-morphisms $+^M : M^2 \to M$, $0^M : 1 = M^0 \to M$, and $\bullet^M : R \times M \to M$ making $|M|$ a left $|R|$-module (meaning that $+^M$ and $0^M$ make $|M|$ a commutative monoid and $\bullet^M$ is an associative and unital action of $|R|$ on $|M|$ that preserves $+$ and $0$ in each variable separately).  For our $\SF(\V)$-signature $\Sigma$, we thus take $\Sigma(0) := 1$, $\Sigma(1) := R$, and $\Sigma(2) := 1$, and we take $\Sigma(n) := 0$ for all $n \geq 3$.  Hence the underlying finitary signature $|\Sigma|$ has one constant symbol $0$, one binary operation symbol $+$, and a unary operation symbol $r$ for each $r \in |R|$.

We now define an equation-discrete $\SF(\V)$-presentation $P$ over $\Sigma$ (\ref{eqn_discrete}) by first defining a discrete $\SF(\V)$-signature $\Gamma$, as follows. We shall define $\Gamma(0)$, $\Gamma(1)$, $\Gamma(2),\Gamma(3)$ to be discrete objects of $\V$ whose elements are certain formal equations over $|\Sigma|$ (i.e. pairs of terms, \ref{presentationexample}) with variables in the finite cardinals $0,1,2,3$, respectively, and for this purpose it will be convenient to write the elements of these finite cardinals as follows: $0 = \varnothing$, $1 = \{x\}$, $2 = \{x,y\}$, $3 = \{x,y,z\}$.  We let the elements of $\Gamma(0)$ be the formal equations $r0 \doteq 0$ with $r \in R$, with the notation of \ref{presentationexample}. We let $\Gamma(1)$ consist of the formal equations $0 + x \doteq x$, $x + 0 \doteq x$, $1^R x \doteq x$, and $0^R x \doteq 0$, together with the formal equations $r(sx) \doteq (r \cdot^R s)x$ and
$(r +^R s)x \doteq rx + sx$ associated to the various $r, s \in R$.  We let $\Gamma(2)$ consist of the formal equations $x + y \doteq y + x$ and $r(x + y)\doteq rx + ry$ with $r \in R$.  We let the unique element of $\Gamma(3)$ be the formal equation $x + (y + z) \doteq (x + y) + z$. Lastly, we let $\Gamma(n) = 0$ for all $n \geq 4$.

The resulting discrete $\SF(\V)$-signature $\Gamma$ has the property that, for each $n \in \N$, the underlying set of $\Gamma(n)$ is a subset of (that of) $T_{\Sigma}(n) \times T_{\Sigma}(n)$, so that since $\Gamma(n)$ is discrete there are morphisms $\terml_n, \termr_n : \Gamma(n) \rightrightarrows T_{\Sigma}(n)$ in $\V$ given as the projections. Thus we obtain an equation-discrete $\SF(\V)$-presentation $P = \left(\terml, \termr : \Gamma \rightrightarrows \T_\Sigma\right)$ over $\Sigma$ for which $|P|\Alg$ is precisely the category $|R|\Mod$ of left $|R|$-modules (in $\Set$), so that by \ref{presentationlem} it follows that $P\Alg$ is precisely the $\V$-category $R\Mod$ of internal left $R$-modules \cite[6.4.2]{functional}. \qed
\end{egg_sub}

\begin{egg_sub}[\textbf{Internal $R$-affine spaces and convex spaces}]\label{raff}
Given an internal rig $R$ in $\V$ (\ref{mod_rigs}), we now define an $\SF(\V)$-presentation whose algebras are \emph{internal (left)} $R$\emph{-affine spaces} in $\V$. To define these, recall from \cite[6.4.5]{functional} that there is a $\V$-category $\Mat_R$ whose objects are the natural numbers and whose hom-objects are $\Mat_R(n, m) := R^{m \times n}$ ($n, m \in \N$), with composition given by internal matrix multiplication (see \cite[6.4.4]{functional}) and identity morphisms given by internal identity matrices. Specializing \cite[8.4]{functional} to the present context where $\V$ is topological over $\Set$, we write $\Mat^\aff_R$ to denote the (non-full) sub-$\V$-category of $\Mat_R$ with the same objects but with hom-objects obtained as subspaces $\Mat^\aff_R(n, m) \hookrightarrow R^{m \times n}$ (with the $V$-initial structure) consisting of all $m\times n$-matrices $r \in R^{m \times n}$ such that $\sum_{j=1}^n r_{ij} =  1$ for each $i \in \left\{1, \ldots, m\right\}$.  These $\V$-categories $\Mat_R$ and $\Mat^\aff_R$ are $\V$-enriched algebraic theories for the subcategory of arities $\SF(\V)$ (\cite[4.2]{EAT}, \cite[6.4.6, 8.4]{functional}), and $\Mat^\aff_R$ is the \textit{affine core} of $\Mat_R$ \cite[8.3]{functional}.  In particular, each object $n$ of $\Mat^\aff_R$ is an $n$th power of the object $1$ in $\Mat^\aff_R$, where the projections $\pi_i:n \rightarrow 1$ ($1 \leq i \leq n$) are the standard basis vectors $\pi_i = b_i \in R^{1 \times n} = R^n$.

By definition \cite[8.5]{functional}, an \emph{internal (left) $R$-affine space} in $\V$ is a normal $\Mat_R^\aff$-algebra, i.e. an object $A$ of $\V$ equipped with a $\V$-functor $\Mat_R^\aff \to \V$ that, for each $n \in \N$, sends the $n$th power cone $\pi_i:n \rightarrow 1$ ($1 \leq i \leq n$) to the usual $n$th power cone $A^n \to A$ in $\V$.  Therefore, writing $\Delta(n, m) := \Mat^\aff_R(n, m)$ for all $n, m \in \N$, 
an internal $R$-affine space is equivalently given by an object $A$ of $\V$ equipped with structural morphisms $\Delta(n, m) \times A^n \to A^m$ ($n, m \in \N$), whose value at $(r, a) \in \Delta(n, m) \times A^n$ we write as $ra$, satisfying the following equations:
\begin{enumerate}
\item $r(sa) = (rs)a$ for all $n, m \in \N$, $s \in \Delta(n, m), r \in \Delta(m, 1)$, and $a \in A^n$;

\item $I_na = a$ for all $n \in \N$ and $a \in A^n$, where $I_n \in \Delta(n, n) \subseteq R^{n \times n}$ is the identity matrix;

\item $(ra)_i = r_ia$ for all $n \in \N$, $r \in \Delta(n, m), a \in A^n$, and $1 \leq i \leq m$, where $r_i \in \Delta(n, 1) \subseteq R^{1 \times n} = R^n$ is the $i$th row of $r$. 
\end{enumerate} 

We now define an $\SF(\V)$-signature $\Delta$ consisting of the subspaces $\Delta(n) := \Delta(n, 1)$ of $R^{1 \times n} = R^n$ with $n \in \N$, so that an internal $R$-affine space in $\V$ is equivalently a $\Delta$-algebra $(A, \alpha)$ that satisfies the following equations, where we write the action of the morphisms $\alpha_n : A^n \times \Delta(n) \to A$ ($n \in \N$) as $(a,r) \mapsto ra$:
\begin{enumerate}
\item $r(s_1a, \ldots, s_ma) = (rs)a$ for all $n, m \in \N, s \in \Delta(n, m), r \in \Delta(m, 1) = \Delta m$, and $a \in A^n$;

\item $b_ia = a_i$ for all $n \in \N, a \in A^n$, and $1 \leq i \leq n$, where $b_i \in \Delta(n) \subseteq R^n$ is the $i$th standard basis vector. 
\end{enumerate}
The reason for the use of the term \textit{$R$-affine space} here is that we normally write $ra$ as $\sum_{i=1}^n r_ia_i$, so that an internal (left) $R$-affine space is an object of $\V$ in which we can take \textit{(left) affine combinations} (i.e. left $R$-linear combinations whose coefficients add up to $1$).  Note that in the special case where $R = [0,\infty)$ and $\V$ is the category of sets (resp. the category of \textit{convergence spaces}) we recover the notion of \textit{convex space} \cite{Meng,Lu:CvxAffCmt} (resp. \textit{convergence convex space} \cite[8.10]{functional}).

It therefore follows that internal $R$-affine spaces are the $P$-algebras for an equation-discrete $\SF(\V)$-presentation $P = (t, u : \Gamma \rightrightarrows \T_\Delta)$ over $\Delta$ defined as follows, noting that we shall of course regard each $r \in \Delta(n) \subseteq R^n$ as an $n$-ary operation symbol in $|\Delta|$. For each finite cardinal $n$, let us write the elements of $n$ as $x_1,...,x_n$, and define $\Gamma(n)$ as the discrete object of $\V$ whose elements are the following formal equations over $|\Delta|$ with variables in $n$:
$$
r(s_1(x_1, \ldots, x_n), \ldots, s_m(x_1, \ldots, x_n)) \doteq (rs)(x_1, \ldots, x_n)\;\;\;\;\; (m \in \N, s \in \Delta(n, m), r \in \Delta(m, 1))
$$
\begin{equation}\label{eq:raff_eqns}
b_i(x_1, \ldots, x_n) \doteq x_i\;\;\;\;\;\;\;\;(1 \leq i \leq n).
\end{equation}
So the underlying set of $\Gamma(n)$ is a subset of (that of) $T_\Delta(n) \times T_\Delta(n)$, and since $\Gamma(n)$ is discrete there are morphisms $t_n, u_n : \Gamma(n) \rightrightarrows T_\Delta(n)$ in $\V$ given as the projections. We thus obtain an equation-discrete $\SF(\V)$-presentation $P = (t, u : \Gamma \rightrightarrows \T_\Delta)$ over $\Delta$ for which $|P|\Alg$ is isomorphic to the category of $|R|$-affine spaces (in $\Set$), so by \ref{presentationlem} we find that $P\Alg$ is isomorphic to the $\V$-category $R\text{-}\mathsf{Aff}$ of internal $R$-affine spaces \cite[8.5]{functional}.

Given $r \in \Delta(n) \subseteq R^n$ and terms $t_1, \ldots, t_n \in T_\Delta(m)$, where $n, m \in \N$, let us now write $\sum_{i=1}^n r_it_i$ as a notation for the term $r(t_1, \ldots, t_n) \in T_\Delta(m)$. Then for each $n \in \N$, the formal equations in \eqref{eq:raff_eqns} may be written as follows, where $I \in R^{n \times n}$ is the identity matrix:
\[ \sum_{i=1}^m r_i \left(\sum_{j=1}^n s_{ij}x_j\right) \doteq \sum_{j=1}^n \left(\sum_{i=1}^m r_is_{ij}\right)x_j\;,\;\;\;\;\;\;\;\;\;\;\sum_{j=1}^n I_{ij}x_j \doteq x_i\;. \tag*{\qed}\]

\end{egg_sub}

\section{Summary of main results}
\label{summary}

For convenience, we summarize the main results of the paper as follows:

\begin{theo}
\label{summarythm}
Let $\V$ be a closed factegory with small limits and colimits. Let $\C$ be a cocomplete $\V$-factegory that is cotensored, suppose either that $\C$ is proper or that $\C$ is \mbox{$\E$-cowellpowered}, and let $j : \J \hookrightarrow \C$ be a subcategory of arities.
\begin{enumerate}[leftmargin=*] 
\item If $\J$ is bounded, then:  
\begin{enumerate}[leftmargin=*]
\item The forgetful functor $\mathcal{W} : \Mnd_{\underJ}(\C) \to \End_{\underJ}(\C)$ is monadic, and $U^H : H\Alg \to \C$ is strictly $\J$-monadic for every $\J$-ary $\V$-endofunctor $H$ on $\C$ (see \ref{mainalgfreetheorem} and \ref{Whasleftadjoint}). 

\item The forgetful functor $\mathcal{U}' : \Mnd(\C) \to \JSig(\C)$ has a left adjoint, and $U^\Sigma : \Sigma\Alg \to \C$ is strictly monadic for every $\J$-signature $\Sigma$ (see \ref{freemonadvariant}).

\item $\Mnd_{\underJ}(\C)$ has small colimits, which are algebraic (see \ref{colimitcategoryofalgebras}).
\end{enumerate} 
\item If $\J$ is bounded and eleutheric, then:
\begin{enumerate}[leftmargin=*]
\item The forgetful functor $\mathcal{U} : \Mnd_{\underJ}(\C) \to \JSig(\C)$ is monadic, and $U^\Sigma : \Sigma\Alg \to \C$ is strictly $\J$-monadic for every $\J$-signature $\Sigma$ (see \ref{freemonadalgebrasaresignaturealgebras} and \ref{Uismonadic}).

\item Every $\J$-ary $\V$-monad on $\C$ has a (canonical) $\J$-presentation (see \ref{presentationcor}).   

\item Every $\J$-presentation $P$ presents a $\J$-ary $\V$-monad $\T_P$ with $\T_P\Alg \cong P\Alg$ in $\V\CAT/\C$ (see  \ref{Palgebrasisomorphismcor}).  More generally, every system of $\J$-ary equations $(\T,E)$ presents a $\J$-ary $\V$-monad $\T/E$ with $(\T/E)\Alg \cong (\T,E)\Alg$ in $\V\CAT/\C$ (see \ref{everypresentationpresentsJarymonad} and  \ref{Palgebrasisomorphism}). \qed
\end{enumerate}
\end{enumerate}
\end{theo}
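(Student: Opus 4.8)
The plan is simply to observe that Theorem \ref{summarythm} collects together results already proved in the body of the paper, so that the only real task is to check that the standing hypotheses of the theorem — $\V$ a closed factegory with small limits and colimits (as in \ref{blanket_assumption}), $\C$ a cocomplete $\V$-factegory that is cotensored, with $\C$ either proper or $\E$-cowellpowered — subsume the hypotheses required by each cited result, and then to invoke those results directly.

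First I would record that these hypotheses are exactly the standing assumptions governing \S\ref{algebraiccolimitsJary} and \S\ref{presentationssection}, while the adjoint-functor and monadicity statements in \ref{mainalgfreetheorem}, \ref{Whasleftadjoint}, \ref{freemonadvariant}, \ref{freemonadalgebrasaresignaturealgebras}, and \ref{Uismonadic} require even less (they do not use properness or $\E$-cowellpoweredness). Hence every result cited in the statement applies under the hypotheses of Theorem \ref{summarythm}.

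Assuming $\J$ bounded: item (1a) is the conjunction of \ref{mainalgfreetheorem} and \ref{Whasleftadjoint}; item (1b) is \ref{freemonadvariant} verbatim (which in particular does not assume eleuthericity, gives the strict monadicity of $U^\Sigma$, and records that $\mathcal{U}' : \Mnd(\C) \to \JSig(\C)$ has a left adjoint); and item (1c) is \ref{colimitcategoryofalgebras}, which is precisely the place where the ``proper or $\E$-cowellpowered'' alternative is used, through \ref{secondenrichedKellytheorem}. Assuming in addition that $\J$ is eleutheric: item (2a) is the conjunction of \ref{freemonadalgebrasaresignaturealgebras} and \ref{Uismonadic}; item (2b) is \ref{presentationcor}; and item (2c) combines \ref{everypresentationpresentsJarymonad}, which supplies the presented $\J$-ary $\V$-monad $\T_P$ (resp.\ $\T/E$), with \ref{Palgebrasisomorphismcor} and \ref{Palgebrasisomorphism}, which identify its $\V$-category of algebras.

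There is no genuine obstacle here: the mathematical content of Theorem \ref{summarythm} lies entirely in the results it aggregates. The only point deserving a word of care is the hypothesis bookkeeping of the second paragraph — confirming that the ``proper or $\E$-cowellpowered'' disjunction, needed only for the colimit-theoretic statements (1c), (2b), (2c), does no harm to the purely adjoint-functor and monadicity statements (1a), (1b), (2a), which impose no such condition — and this is immediate.
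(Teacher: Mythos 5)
Your proposal is correct and coincides with the paper's own treatment: Theorem \ref{summarythm} is stated as a summary with the relevant results cited inline, and its justification is exactly the hypothesis bookkeeping you describe (noting in particular that boundedness of $\J$ includes smallness, so the standing assumptions of \S\ref{algebraiccolimitsJary} and \S\ref{presentationssection} are met, and that the ``proper or $\E$-cowellpowered'' alternative is only needed for the colimit and presentation statements). Nothing further is required.
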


\noindent In a locally bounded $\V$-category $\C$ over a locally bounded closed category $\V$ (\ref{locallybounded}), every small subcategory of arities in $\C$ is bounded (\ref{generallocallyboundedprop}), so Theorem \ref{summarythm} entails the following corollary, which we can apply in particular to $\C = \V$ itself by \cite[5.8]{locbd}.

\begin{cor}
\label{locallyboundedresults}
Let $\C$ be a locally bounded $\V$-category over a locally bounded closed category $\V$, and let $j : \J \hookrightarrow \C$ be a subcategory of arities.  If $\J$ is small, then statements 1(a)-(c) in \ref{summarythm} hold, while if $\J$ is small and eleutheric then 2(a)-(c) in \ref{summarythm} also hold. \qed
\end{cor}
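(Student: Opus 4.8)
The plan is to derive Corollary \ref{locallyboundedresults} as a direct application of the Summary Theorem \ref{summarythm}, whose hypotheses are already known to be met in the locally bounded setting. First I would recall that a locally bounded $\V$-category $\C$ over a locally bounded closed category $\V$ is, by the definition recalled in \ref{locallybounded}, a cocomplete proper $\V$-factegory (equipped with the proper enriched factorization system $(\E,\M)$ carried by $\C$, which is compatible with the one on $\V$ as required by \ref{blanket_assumption}), and that it is moreover complete, hence in particular cotensored. Likewise $\V$ itself is locally bounded, hence a closed factegory with small limits and colimits, so the blanket assumptions of \S \ref{background} and \ref{blanket_assumption} are in force. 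Thus $\C$ satisfies all the standing hypotheses of \ref{summarythm}: it is a cocomplete $\V$-factegory that is cotensored, and it is proper. (We do not even need $\E$-cowellpoweredness, since properness suffices for the disjunctive hypothesis in \ref{summarythm}.)

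Next I would invoke \ref{generallocallyboundedprop}, which states precisely that if $j : \J \hookrightarrow \C$ is a small subcategory of arities in a locally bounded $\V$-category $\C$ over a locally bounded closed category $\V$, then $\J$ is bounded. Hence, under the hypothesis that $\J$ is small, $\J$ is bounded, and so statements 1(a), 1(b), 1(c) of \ref{summarythm} apply verbatim. If in addition $\J$ is eleutheric, then $\J$ is both bounded and eleutheric, so statements 2(a), 2(b), 2(c) of \ref{summarythm} apply as well. This yields all the assertions of the corollary.

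Concretely, the proof I would write is essentially one or two sentences: ``Since $\C$ is a locally bounded $\V$-category over a locally bounded closed category, it is a cocomplete proper $\V$-factegory that is cotensored (see \ref{locallybounded}), and $\V$ is a closed factegory with small limits and colimits, so the standing hypotheses of \ref{summarythm} are satisfied. If $\J$ is small, then $\J$ is bounded by \ref{generallocallyboundedprop}, whence statements 1(a)-(c) of \ref{summarythm} hold; if moreover $\J$ is eleutheric, then $\J$ is bounded and eleutheric, so statements 2(a)-(c) of \ref{summarythm} also hold.'' The final remark that this applies to $\C = \V$ itself follows because $\V$ is a locally bounded $\V$-category over itself by \cite[5.8]{locbd}.

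There is essentially no obstacle here: the corollary is a packaging result, and all the genuine mathematical content has already been established --- the boundedness of small subcategories of arities in \ref{generallocallyboundedprop}, and the main theorems it feeds into (\ref{mainalgfreetheorem}, \ref{Whasleftadjoint}, \ref{freemonadvariant}, \ref{colimitcategoryofalgebras}, \ref{freemonadalgebrasaresignaturealgebras}, \ref{Uismonadic}, \ref{presentationcor}, \ref{everypresentationpresentsJarymonad}, \ref{Palgebrasisomorphism}, \ref{Palgebrasisomorphismcor}) as collected in \ref{summarythm}. The only point requiring the slightest care is checking that the factorization-system and completeness hypotheses of \ref{summarythm} are indeed automatic in the locally bounded setting; this is exactly what the definitions and cited results in \ref{locallybounded} (in particular the completeness statement \cite[4.27]{locbd}) provide, so it is immediate.
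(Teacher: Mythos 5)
Your proposal is correct and follows exactly the paper's (implicit) argument: the paper derives this corollary in the sentence preceding it, by noting that every small subcategory of arities in a locally bounded $\V$-category over a locally bounded $\V$ is bounded (\ref{generallocallyboundedprop}) and then applying \ref{summarythm}. Your additional verification that the standing hypotheses of \ref{summarythm} (cocomplete proper $\V$-factegory, cotensored via completeness, closed factegory structure on $\V$) are automatic in the locally bounded setting is exactly the right check and is supplied by the definitions and citations recalled in \ref{locallybounded}.
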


\begin{egg}
\label{mainresultsexamples}
By \ref{eleuthericexamples} and \ref{boundedexamples}, each of the following classes of examples satisfies the assumptions of \ref{summarythm}(2), so that we can apply the main results of this paper to these examples:
\begin{enumerate}[leftmargin=0pt,labelindent=0pt,itemindent=*,label=(\arabic*)]
\item If $\V$ is locally $\alpha$-presentable as a closed category and $\C$ is a locally $\alpha$-presentable $\V$-category, then we may take $(\E, \M) = (\Iso, \All)$, $(\E_\C, \M_\C) = (\Iso, \All)$, and $\J = \C_\alpha \hookrightarrow \C$.

\item If $\V$ is a $\pi$-category  \cite{BorceuxDay} (e.g. if $\V$ is cartesian closed), then we may take $(\E, \M) = (\Iso, \All)$, $\C = \V$, and $\J = \SF(\V) \hookrightarrow \V$.

\item Equip $\V$ with $(\E, \M) = (\Iso, \All)$, let $\C = \V$, and take $\J = \{I\} \hookrightarrow \V$.

\item Given a small $\V$-category $\A$, equip $\V$ with $(\E, \M) = (\Iso, \All)$, let $\C = [\A, \V]$ with $(\E_\C, \M_\C) = (\Iso, \All)$, and consider the subcategory of arities $\y_\A : \A^\op \hookrightarrow [\A, \V]$. 

\item Let $\V$ be a locally bounded and $\E$-cowellpowered closed category, let $\Phi$ be a locally small class of small weights satisfying Axiom A of \cite{LR}, let $\C = \Phi\Mod(\scrT)$ for a $\Phi$-theory $\scrT$, with the associated proper factorization system on $\C$, and consider the subcategory of arities $\y_\Phi:\scrT^\op \hookrightarrow \C$.

\item Let $\bbD$ be a sound doctrine, suppose that $\V$ is locally $\bbD$-presentable as a $\tensor$-category, equip $\V$ with $(\E, \M) = (\Iso, \All)$, let $\scrT$ be a $\Phi_\bbD$-theory, let $\C = \Phi_\bbD\Mod(\scrT)$ with $(\E_\C, \M_\C) = (\Iso, \All)$, and consider the subcategory of arities $\y_\Phi : \scrT^\op \hookrightarrow \Phi_\bbD\Mod(\scrT)$.

\item As noted in \ref{locallyboundedresults}, \textit{any} small and eleutheric subcategory of arities in a locally bounded $\V$-category $\C$ over a locally bounded closed category $\V$ satisfies the hypotheses of \ref{summarythm}, with respect to the proper factorization systems carried by $\V$ and $\C$, so the main results of this paper are applicable quite generally in such $\V$-categories $\C$. \qed     
\end{enumerate}
\end{egg}

\noindent Note that (5) is a special case of (7), while (5) and (7) entail that, for locally bounded $\V$, the results in this paper specialize to provide a full theory of presentations and algebraic colimits of (i) $\Phi$-accessible $\V$-monads in the general setting of Lack and Rosick\'{y} \cite{LR} and \linebreak (ii) $\J$-ary $\V$-monads on $\C = \V$ for small eleutheric systems of arities $\J \hookrightarrow \V$ \cite{EAT}.  In both these classes of examples, and moreover in (2)-(5) and (7), $\V_0$ and $\C_0$ need not be locally presentable.

\bibliographystyle{amsplain}
\bibliography{mybib}

\end{document}